\newcommand{\Title}[1]{\bigskip\bigskip\centerline{\bf #1}\bigskip}
\newcommand{\Author}[1]{\medskip\centerline{ \it #1}}
\newcommand{\Affiliation}[1]{\medskip\centerline{#1}}
\newcommand{\Email}[1]{\medskip\centerline{#1}\bigskip}
\begin{document}

\newcommand{\N}{\mbox {$\mathbb N $}}
\newcommand{\Z}{\mbox {$\mathbb Z $}}
\newcommand{\Q}{\mbox {$\mathbb Q $}}
\newcommand{\R}{\mbox {$\mathbb R $}}
\newcommand{\lo }{\longrightarrow }
\newcommand{\ul}{\underleftarrow }
\newcommand{\rl}{\underrightarrow }
\newcommand{\rs }{\rightsquigarrow }
\newcommand{\ra }{\rightarrow }
\newcommand{\dd }{\rightsquigarrow }
\newcommand{\ol }{\overline }
\newcommand{\la }{\langle }
\newcommand{\tr }{\triangle }
\newcommand{\xr }{\xrightarrow }
\newcommand{\de }{\delta }
\newcommand{\pa }{\partial }
\newcommand{\LR }{\Longleftrightarrow }
\newcommand{\Ri }{\Rightarrow }
\newcommand{\va }{\varphi }
\newcommand{\Den}{{\rm Den}\,}
\newcommand{\Ker}{{\rm Ker}\,}
\newcommand{\Reg}{{\rm Reg}\,}
\newcommand{\Fix}{{\rm Fix}\,}
\newcommand{\Sup}{{\rm Sup}\,}
\newcommand{\Inf}{{\rm Inf}\,}
\newcommand{\Img}{{\rm Im}\,}
\newcommand{\Id}{{\rm Id}\,}

\newtheorem{theorem}{Theorem}[section]
\newtheorem{lemma}[theorem]{Lemma}
\newtheorem{proposition}[theorem]{Proposition}
\newtheorem{corollary}[theorem]{Corollary}
\newtheorem{definition}[theorem]{Definition}
\newtheorem{example}[theorem]{Example}
\newtheorem{examples}[theorem]{Examples}
\newtheorem{xca}[theorem]{Exercise}
\theoremstyle{remark}
\newtheorem{remark}[theorem]{Remark}
\numberwithin{equation}{section}

\def\leftmark{L.C. Ciungu}

\Title{QUANTUM B-ALGEBRAS WITH INVOLUTIONS} 
\title[Quatum B-algebras with involutions]{}
                                                                           
\Author{\textbf{LAVINIA CORINA CIUNGU}}
\Affiliation{Department of Mathematics} 
\Affiliation{University of Iowa}
\Affiliation{14 MacLean Hall, Iowa City, Iowa 52242-1419, USA}
\Email{lavinia-ciungu@uiowa.edu}

\begin{abstract} 
The aim of this paper is to define and study the involutive and weakly involutive quantum B-algebras. 
We prove that any weakly involutive quantum B-algebra is a quantum B-algebra with pseudo-product.  
As an application, we introduce and investigate the notions of existential and universal quantifiers on involutive 
quantum B-algebras. It is proved that there is a one-to-one correspondence between the quantifiers on weakly 
involutive quantum B-algebras. One of the main results consists of proving that any pair of quantifiers is a 
monadic operator on weakly involutive quantum B-algebras. We investigate the relationship between quantifiers on 
bounded sup-commutative pseudo BCK-algebras and quantifiers on other related algebraic structures, such as pseudo MV-algebras and bounded Wajsberg hoops. \\

\textbf{Keywords:} {quantum B-algebra, involutive quantum B-algebra, good map, synchronized map, existential quantifier, universal quantifier, monadic operator} \\
\textbf{AMS classification (2010):} 03G25, 06F35, 003B52
\end{abstract}

\maketitle

\section{Introduction} 

In the last decades, developing algebraic models for non-commutative multiple-valued logics became a central 
topic in the study of fuzzy systems.  
The non-commutative generalizations of MV-algebras called pseudo MV-algebras were introduced by G. Georgescu 
and A. Iorgulescu (\cite{Geo2}) and independently by J. Rach{\accent23u}nek (\cite{Rac2}) under the name of 
generalized MV-algebras. 
Pseudo-effect algebras were defined and investigated in \cite{DvVe1} and \cite{DvVe2} by A. Dvure\v censkij and 
T. Vetterlein as non-commutative generalizations of effect algebras. 
Pseudo BL-algebras were introduced and studied in \cite{Dino2} and \cite{Dino3}, pseudo MTL-algebras in \cite{Flo2}, 
bounded non-commutative R$\ell$-monoids in \cite{DvRa1} and pseudo-hoops in \cite{Geo16}. 
Pseudo BCK-algebras were introduced by G. Georgescu and A. Iorgulescu in \cite{Geo15} as algebras 
with "two differences", a left- and right-difference, and with a constant element $0$ as the least element. 
Nowadays, pseudo BCK-algebras are used in a dual form, with two implications, $\ra$ and $\rs$ and 
with one constant element $1$, that is the greatest element. 
Pseudo-BCI algebras were defined by \cite{Dud1} as generalizations of pseudo-BCK algebras and BCI-algebras, 
and they form an important tool for an algebraic axiomatization of implicational fragment of non-classical 
logic (\cite{Dym3}). \\
Rump and Yang introduced the concept of quantum B-algebras (\cite{Rump2,Rump1}), and proved that the quantum 
B-algebras can provide a unified semantic for non-commutative algebraic logic. All implicational algebras studied before - pseudo-effect algebras, residuated lattices, pseudo MV/BL/MTL-algebras, bounded non-commutative R$\ell$-monoids, pseudo-hoops, pseudo BCK/BCI-algebras - are quantum B-algebras. 
The properties of quantum B-algebras were investigated in \cite{Conig1}, \cite{Han1}, \cite{Han2}, \cite{Rump3}. \\
The concept of a \emph{existential quantifier} on a Boolean algebra $A$ was introduced by Halmos in \cite{Halmos} 
as a map $\exists:A\longrightarrow A$, such that: 
$(i)$ $\exists \perp=\perp$ ($\perp$ is the smallest element of $A$), $(ii)$ $x\le \exists x$, 
$(iii)$ $\exists(x\wedge \exists y)=\exists x\wedge \exists y$, for all $x, y\in A$. 
The pair $(A,\exists)$ was called \emph{Boolean monadic algebra} and the theory of monadic Boolean algebras is an algebraic treatment of the logic of propositional functions of one argument, with Boolean operations and a single
(existential) quantifier. 
The properties of the new concept have been studied by many researchers. 
Algebraic counterparts of the existential or universal quantifiers have been consequently 
studied also for other non-classical logics, while Halmos's representation theorems have been extended to certain 
algebras of fuzzy logic. 
Various methods were used to introduce the quantifiers on these algebraic structures (see \cite{Sol1}): \\
$\hspace*{0.5cm}$ $-$ to introduce pairs of existential and universal quantifiers and study their properties 
with respect to each other (\cite{Bez2}, \cite{Cas1}, \cite{Rac3}, \cite{Ciu35}, \cite{Wang1}, \cite{Zah1}, \cite{Ciu36}); \\
$\hspace*{0.5cm}$ $-$ to derive one quantifier from the other using a suitable involution-like operation 
(\cite{Dino1}, \cite{Geo21}, \cite{Ior16}, \cite{Sol1}, \cite{Cha1}). \\
Monadic MV-algebras were introduced and studied in \cite{Rut1} as an algebraic model of the predicate calculus 
of the  \L ukasiewicz infinite valued logic in which only a single individual variable occurs. 
Recently, the theory of monadic MV-algebras has been developed in \cite{Bell1}, \cite{Dino1}, \cite{Geo20}.
Monadic operators were defined and investigated on various algebras of fuzzy logic: 
Heyting algebras (\cite{Bez1}), basic algebras (\cite{Cha1}), GMV-algebras (\cite{Rac1}), involutive pseudo 
BCK-algebras (\cite{Ior16}), bounded commutative R$\ell$-monoids (\cite{Rac4}), bounded residuated lattices (\cite{Rac3}), residuated lattices (\cite{Kondo1}), 
BE-algebras (\cite{Zah1}), Wajsberg hoops (\cite{Cim1}), BL-algebras (\cite{Cas1}), bounded hoops (\cite{Wang1}), 
pseudo equality algebras (\cite{Ghor1}), pseudo BCI-algebras (\cite{Xin1}), NM-algebras (\cite{Wang2}), 
pseudo BE-algebras (\cite{Ciu35}). The monadic operators on quantum B-algebras have been recently introduced 
in \cite{Ciu36}. \\
The aim of this paper is to define and study the involutive and weakly involutive quantum B-algebras. 
We prove that any weakly involutive quantum B-algebra is a quantum B-algebra with pseudo-product. 
The notions of good and synchronized maps on a weakly involutive quantum B-algebra $X$ are defined and, given a 
pair $(\tau,\sigma)$ of synchronized maps, we prove that $\tau$ is an interior operator if and only if $\sigma$ 
is a closure operator on $X$. 
As an application, we introduce and investigate the notions of existential and universal quantifiers on involutive 
quantum B-algebras. It is proved that there is a one-to-one correspondence between the quantifiers on weakly 
involutive quantum B-algebras. One of the main results consists of proving that any pair of quantifiers is a 
monadic operator on weakly involutive quantum B-algebras. The case of weakly involutive integral quantum B-algebras 
is also considered. Finally, we investigate the relationship between quantifiers on bounded sup-commutative pseudo BCK-algebras and quantifiers on other related algebraic structures, such as pseudo MV-algebras and bounded Wajsberg hoops.

$\vspace*{5mm}$

\section{Preliminaries}

In this section we recall some basic notions and results regarding quantum B-algebras used in the paper.  

\begin{definition} \label{psBE-10} $\rm($\cite{Rump2}$\rm)$ 
A \emph{quantum B-algebra} is a partially ordered set $(X,\le)$ with two binary operations $\ra$ and $\rs$ 
satisfying the following axioms, for all $x, y, z\in X:$ \\
$(QB_1)$ $y\ra z\le (x\ra y)\ra (x\ra z);$ \\
$(QB_2)$ $y\rs z\le (x\rs y)\ra (x\rs z);$ \\
$(QB_3)$ $y\le z$ implies $x\ra y\le x\ra z;$ \\
$(QB_4)$ $x\le y\ra z$ iff $y\le x\rs z$.   
\end{definition}

We will refer to $(X,\le,\ra,\rs)$ by its univers $X$. 
A quantum B-algebra $(X,\le,\ra,\rs)$ is said to be \emph{commutative} if $x\ra y=x\rs y$, for all $x, y\in X$. 

\begin{proposition} \label{psBE-20} $\rm($\cite{Rump1, Rump2, Han1}$\rm)$ 
Let $(X,\le,\ra,\rs)$ be a quantum B-algebra. The following hold, for all $x, y, z\in X:$ \\
$(1)$ $x\le (x\ra y)\rs y$, $x\le (x\rs y)\ra y;$ \\
$(2)$ $y\le z$ implies $x\rs y\le x\rs z;$ \\
$(3)$ $x\le y$ implies $y\ra z\le x\ra z$, $y\rs z\le x\rs z;$ \\ 
$(4)$ $x\ra y=((x\ra y)\rs y)\ra y$, $x\rs y=((x\rs y)\ra y)\rs y;$ \\
$(5)$ $x\ra y\le (y\ra z)\rs (x\ra z)$, $x\rs y\le (y\rs z)\ra (x\rs z);$ \\
$(6)$ $x\ra(y\rs z)=y\rs (x\ra z)$.  
\end{proposition}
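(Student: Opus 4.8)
The workhorse throughout is the adjunction $(QB_4)$, which lets me trade any inequality whose right-hand side is an implication for an equivalent one with the two operations interchanged; items $(1)$--$(4)$ then fall out almost mechanically. For $(1)$ I would read $x\le(x\ra y)\rs y$ as an instance of the right-hand form of $(QB_4)$, so that it is equivalent to the trivial $x\ra y\le x\ra y$, and dually $x\le(x\rs y)\ra y$ is equivalent to $x\rs y\le x\rs y$. For $(2)$ I start from $x\le(x\rs y)\ra y$ (item $(1)$), push the second argument up via $(QB_3)$ to get $x\le(x\rs y)\ra z$, and apply $(QB_4)$ to conclude $x\rs y\le x\rs z$. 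For $(3)$ I feed the hypothesis $x\le y$ into item $(1)$: from $y\le(y\ra z)\rs z$ I obtain $x\le(y\ra z)\rs z$, and $(QB_4)$ returns $y\ra z\le x\ra z$ (the $\rs$-case is identical). Finally $(4)$ is a pair of inequalities: one direction is item $(1)$ applied to $x\ra y$ in place of $x$, and the reverse direction applies the antitonicity $(3)$ to the inequality $x\le(x\ra y)\rs y$ coming from $(1)$.

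Items $(5)$ and $(6)$ carry the real content. The first half of $(5)$ is immediate: applying $(QB_4)$ to $x\ra y\le(y\ra z)\rs(x\ra z)$ turns it into $y\ra z\le(x\ra y)\ra(x\ra z)$, which is exactly $(QB_1)$. The same move on the second half turns $x\rs y\le(y\rs z)\ra(x\rs z)$ into $y\rs z\le(x\rs y)\rs(x\rs z)$, the verbatim $\rs$-analogue of $(QB_1)$. This is where I expect the difficulty to concentrate, because it is \emph{not} an axiom: feeding $(QB_2)$ through $(QB_4)$ yields only the mixed inequality $x\rs y\le(y\rs z)\rs(x\rs z)$, and iterating $(QB_4)$ merely toggles between the two equivalent forms of the target without ever producing it. I would therefore isolate this $\rs$-transitivity as a separate lemma and prove it by combining $(QB_2)$ with the monotonicity facts $(2)$ and $(3)$ together with a well-chosen auxiliary variable.

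Granting that lemma, $(6)$ splits into two symmetric chains. For $x\ra(y\rs z)\le y\rs(x\ra z)$ I apply $(QB_4)$ to reduce to $y\le(x\ra(y\rs z))\ra(x\ra z)$, then chain $y\le(y\rs z)\ra z$ (item $(1)$) with the $(QB_1)$-instance $(y\rs z)\ra z\le(x\ra(y\rs z))\ra(x\ra z)$. For the reverse inequality $y\rs(x\ra z)\le x\ra(y\rs z)$ I apply $(QB_4)$ to reduce to $x\le(y\rs(x\ra z))\rs(y\rs z)$, then chain $x\le(x\ra z)\rs z$ (item $(1)$) with the instance $(x\ra z)\rs z\le(y\rs(x\ra z))\rs(y\rs z)$ of the $\rs$-transitivity lemma. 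Thus both directions of $(6)$ rest on the same two ingredients as $(5)$: item $(1)$ together with, respectively, $(QB_1)$ and its $\rs$-twin.

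The single genuine obstacle, then, is the $\rs$-transitivity $y\rs z\le(x\rs y)\rs(x\rs z)$. The axioms privilege $\ra$---the derived connective in both $(QB_1)$ and $(QB_2)$ is $\ra$, and $(QB_4)$ only relates the two operations at the outermost level---so the $\rs$-counterpart has to be extracted by hand rather than by substitution. I expect the naive attempt, namely bounding the outer $\rs$ by monotonicity, to fail, because the two comparisons it requires pull the auxiliary variable in opposite directions; a finer argument that synthesizes the shared-left-operand form from $(QB_2)$ will be needed. Everything else in the proposition is bookkeeping around $(QB_4)$.
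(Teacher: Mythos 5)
Your treatment of $(1)$--$(4)$, of the first half of $(5)$, and of both directions of $(6)$ is correct, and it is essentially the paper's own argument written out in full: the paper proves $(3)$ by the same chain $x\le y\le (y\ra z)\rs z$, obtains the first half of $(5)$ from $(QB_1)$ plus $(QB_4)$, and proves the forward direction of $(6)$ by exactly your chain $y\le (y\rs z)\ra z\le (x\ra (y\rs z))\ra (x\ra z)$; its laconic ``Similarly'' for the reverse direction is exactly your second chain $x\le (x\ra z)\rs z\le (y\rs (x\ra z))\rs (y\rs z)$ followed by $(QB_4)$. But that chain, like the second half of $(5)$, rests on the inequality $y\rs z\le (x\rs y)\rs (x\rs z)$, and this you never prove: you only promise that ``a finer argument'' combining $(QB_2)$ with the monotonicity facts will produce it. That is a genuine gap, and it sits at the load-bearing point: without this lemma you have neither the second half of $(5)$ nor the reverse half of $(6)$.

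The missing ingredient is a diagnosis rather than a derivation: $(QB_2)$ as printed in this paper is a misprint. The standard axiom (in Rump--Yang, from which the proposition is quoted) reads $y\rs z\le (x\rs y)\rs (x\rs z)$, with $\rs$ as the outer operation --- precisely your ``$\rs$-transitivity lemma.'' With that reading, the second half of $(5)$ is just $(QB_2)$ pushed through $(QB_4)$, in perfect symmetry with the first half, and nothing remains to be done. The paper itself forces this reading in several places: axiom $(psBCI_1)$ of Definition \ref{psBE-90-10} contains $x\rs y\le (y\rs z)\ra (x\rs z)$, which is the second half of $(5)$, and pseudo-BCI algebras are asserted to be quantum B-algebras; Proposition \ref{psBE-30} says that $(QB_3)$, $(QB_4)$ and the identity $x\ra(y\rs z)=y\rs(x\ra z)$ axiomatize quantum B-algebras, and from those three one derives the outer-$\rs$ form (via $(1)$, $(2)$ and the identity) but not the printed outer-$\ra$ form --- the printed form is a commutation-type inequality that fails in non-commutative residuated posets and quantales, the motivating examples of the whole theory. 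For the same reason your fallback plan is very unlikely to succeed: through $(QB_4)$ the printed $(QB_2)$ is equivalent only to $x\rs y\le (y\rs z)\rs (x\rs z)$ (as you computed), and no combination of this with $(QB_1)$ and the monotonicity facts visibly yields the inequality you need; the two are of genuinely different type. So: right scaffolding, correct local steps, but the proposal is incomplete at its central point, and the correct completion is to repair the axiom, not to search for the finer argument you postulate.
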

\begin{proof}
$(3)$ From $x\le y\le (y\ra z)\rs z$, it follows that $y\ra z\le x\ra z$. Similarly, $y\rs z\le x\rs z$. \\
$(5)$ It follows by $(QB_1)$ and $(QB_4)$. \\ 
$(6)$ From $(1)$ and $(QB_1)$ we have $y\le (y\rs z)\ra z\le (x\ra (y\rs z))\ra (x\ra z)$, hence, by $(QB_4)$, 
$x\ra(y\rs z)\le y\rs (x\ra z)$. Similarly, $y\rs (x\ra z)\le x\ra(y\rs z)$, so $x\ra(y\rs z)=y\rs (x\ra z)$. 
\end{proof}

\begin{proposition} \label{psBE-30} $\rm($\cite{Rump2}$\rm)$ 
An algebra $(X,\le,\ra,\rs)$ endowed with a partial order $\le$ and two binary operations $\ra$ and $\rs$ is a 
quantum B-algebra if and only if it satisfies $(QB_3)$, $(QB_4)$ and the identity $x\ra (y\rs z)=y\rs (x\ra z)$, 
for all $x, y, z\in X$. 
\end{proposition}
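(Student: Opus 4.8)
The forward implication is immediate: $(QB_3)$ and $(QB_4)$ are part of the definition, while the identity $x\ra(y\rs z)=y\rs(x\ra z)$ is exactly Proposition~\ref{psBE-20}$(6)$. Hence the whole content lies in the converse, where one assumes only $(QB_3)$, $(QB_4)$ and the exchange identity $x\ra(y\rs z)=y\rs(x\ra z)$, and must recover $(QB_1)$ and $(QB_2)$.

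The plan for the converse is first to rebuild the elementary toolkit used in Proposition~\ref{psBE-20}, but now from this reduced set of hypotheses. Instantiating $(QB_4)$ at the trivial inequalities $x\ra y\le x\ra y$ and $x\rs y\le x\rs y$ yields at once $x\le(x\ra y)\rs y$ and $x\le(x\rs y)\ra y$, i.e. Proposition~\ref{psBE-20}$(1)$. Monotonicity of $\ra$ in the second variable is $(QB_3)$ itself; I would then derive the $\rs$-analogue, namely that $y\le z$ implies $x\rs y\le x\rs z$, by starting from $x\le(x\rs y)\ra y$, pushing it through $(QB_3)$ to obtain $x\le(x\rs y)\ra z$, and transposing once with $(QB_4)$.

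The heart of the argument is to establish the two ``transitivity'' inequalities of Proposition~\ref{psBE-20}$(5)$, from which $(QB_1)$ and $(QB_2)$ follow by a single application of $(QB_4)$. For the first I would start from $y\le(y\ra z)\rs z$, apply the monotone map $x\ra(-)$, and rewrite $x\ra\bigl((y\ra z)\rs z\bigr)=(y\ra z)\rs(x\ra z)$ by means of the exchange identity; this gives $x\ra y\le(y\ra z)\rs(x\ra z)$, and $(QB_4)$ converts it into $(QB_1)$. For the second I would run the mirror image: from $y\le(y\rs z)\ra z$, apply $x\rs(-)$ (monotone by the $\rs$-analogue just produced) and rewrite $x\rs\bigl((y\rs z)\ra z\bigr)=(y\rs z)\ra(x\rs z)$, so that one further use of $(QB_4)$ delivers $(QB_2)$.

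The step I expect to be delicate is this second transitivity inequality, for two reasons. First, it needs the $\rs$-monotonicity, which is not among the hypotheses and has to be manufactured beforehand. Second, it rests on the rewriting $x\rs(u\ra z)=u\ra(x\rs z)$, which superficially resembles a ``dual'' exchange law; the point to notice is that it is nothing but the given identity read with the substitution $a:=u$, $b:=x$, $c:=z$ in $a\ra(b\rs c)=b\rs(a\ra c)$. Recognising that the single hypothesis already provides both the $\ra$-outer and the $\rs$-outer forms is the crux of the proof; once this is seen, the remainder reduces to routine bookkeeping with the adjunction $(QB_4)$.
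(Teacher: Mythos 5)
Your proof is correct, but there is nothing in the paper to compare it against: Proposition \ref{psBE-30} is stated with a citation to \cite{Rump2} and no proof is given. Taken on its own merits, your argument is a complete and clean derivation. The forward direction is indeed just the axioms plus Proposition \ref{psBE-20}$(6)$ (which the paper does prove from $(QB_1)$, $(QB_2)$, $(QB_4)$). For the converse, your four steps all check out: $x\le(x\ra y)\rs y$ and $x\le(x\rs y)\ra y$ follow from $(QB_4)$ alone; $\rs$-monotonicity follows by pushing $x\le(x\rs y)\ra y$ through $(QB_3)$ and transposing; and each subsumption axiom is obtained by applying the appropriate monotone map to an inequality of type Proposition \ref{psBE-20}$(1)$, rewriting with the exchange identity, and transposing once with $(QB_4)$. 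Your observation that the single identity $x\ra(y\rs z)=y\rs(x\ra z)$ already serves in both the $\ra$-outer and $\rs$-outer readings is exactly the crux.

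One point deserves emphasis, because it affects what ``$(QB_2)$'' means. Your derivation produces $x\rs y\le(y\rs z)\ra(x\rs z)$, whose transposition under $(QB_4)$ is $y\rs z\le(x\rs y)\rs(x\rs z)$, with $\rs$ as the outer operation. The axiom $(QB_2)$ as printed in Definition \ref{psBE-10} reads $y\rs z\le(x\rs y)\ra(x\rs z)$, with outer $\ra$; that is a misprint. The printed form is not derivable, and in fact fails in genuine quantum B-algebras: in the pseudo BCK-algebra of Remark \ref{qinv-iqb-10-10}, taking $x=c$, $y=a$, $z=0$ gives $y\rs z=c$ but $(x\rs y)\ra(x\rs z)=d\ra b=b$, and $c\not\le b$. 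The $\rs$-outer form is also what the paper's own proof of Proposition \ref{psBE-20}$(5)$ and the pseudo BCI axiom $(psBCI_1)$ implicitly rely on, and it is the axiom in Rump--Yang's original definition. So what you proved is the correct statement, and the proof stands; just be aware that under a literal reading of the paper's $(QB_2)$ the converse would be false, so your final sentence ``one further use of $(QB_4)$ delivers $(QB_2)$'' should explicitly record the axiom in the form $y\rs z\le(x\rs y)\rs(x\rs z)$.
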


We recall some notions regarding the quantum B-algebras from \cite{Rump1} and \cite{Rump2}. \\ 
A quantum B-algebra $X$ is said to be \emph{unital} if there is an element $u\in X$ such that 
$u\ra x=u\rs x=x$, for all $x\in X$. The element $u$ is called a \emph{unit element} and the unit element  
is unique (\cite{Rump1}). 
We can easily check that $x\le y$ iff $u\le x\ra y$ iff $u\le x\rs y$. 
Indeed, $x\le y$ iff $x\le u\rs y$ iff $u\le x\ra y$, and $x\le y$ iff $x\le u\ra y$ iff $u\le x\rs y$. 
Since $x\le x$, it is clear that $u\le x\ra x$ and $u\le x\rs x$, for all $x\in X$. 
A unital quantum B-algebra $X$ is said to be \emph{normal} if $x\ra x=x\rs x=u$, for all $x\in X$ (\cite{Rump1}).   
Let $X$ be a unital quantum B-algebra. An element $x\in X$ is said to be \emph{integral} if $x\ra u=x\rs u=u$ 
(\cite{Rump2}).
The subset of all integral elements of $X$ is denoted by $I(X)$. 
If $I(X)=X$, we say that the unital quantum B-algebra $X$ is integral. 

\begin{proposition} \label{psBE-70} $\rm($\cite{Rump2}$\rm)$
For a quantum B-algebra $X$ the following are equivalent: \\
$(a)$ $X$ is a pseudo BCK-algebra;\\
$(b)$ $X$ is integral;\\
$(c)$ $X$ has a greatest element which is a unit element. 
\end{proposition}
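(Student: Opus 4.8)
The plan is to prove the equivalences through the central statement $(c)$, establishing $(b)\LR(c)$ first and then $(a)\LR(c)$. Both halves lean on the two facts recalled just before the proposition: that in a unital quantum B-algebra $x\le y$ iff $u\le x\ra y$ iff $u\le x\rs y$, and that $u\le x\ra x$ and $u\le x\rs x$ for every $x$. Since integrality is only defined for unital algebras, $(b)$ and $(c)$ both carry a distinguished unit $u$, and the whole argument amounts to recognizing when $u$ is forced to be the top element.

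For $(c)\Ri(b)$, suppose $u$ is simultaneously the unit and the greatest element. For any $x$ the element $x\ra u$ lies below $u$ because $u$ is greatest, while the characterization applied with $y=u$ turns $x\le u$ into $u\le x\ra u$; hence $x\ra u=u$, and symmetrically $x\rs u=u$, so every element is integral. For $(b)\Ri(c)$, assume $X$ is integral, so $x\ra u=x\rs u=u$ for all $x$. The same biconditional, read as $x\le u$ iff $u\le x\ra u$, then holds trivially for every $x$ because $x\ra u=u$; thus $u$ is the greatest element, and it is a unit by hypothesis, giving $(c)$.

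For $(a)\LR(c)$ I would use the dual (implicational) axiomatization of pseudo BCK-algebras, in which such an algebra is a structure $(X,\le,\ra,\rs,1)$ with $1$ a greatest element satisfying $1\ra x=1\rs x=x$, the compatibility $x\le y$ iff $x\ra y=1$ iff $x\rs y=1$, the exchange law $(QB_4)$, and the transitivity inequalities $x\ra y\le(y\ra z)\rs(x\ra z)$ and $x\rs y\le(y\rs z)\ra(x\rs z)$. The implication $(a)\Ri(c)$ is then immediate, since $1$ is a greatest unit element. For $(c)\Ri(a)$, with $u$ both greatest and the unit, the exchange law is exactly $(QB_4)$ and the two transitivity inequalities are precisely Proposition \ref{psBE-20}$(5)$, so only two points remain: the compatibility $x\le y$ iff $x\ra y=u$ iff $x\rs y=u$, which follows from the unit characterization once one notes that $x\ra y\le u$ upgrades $u\le x\ra y$ to an equality; and $1\ra x=1\rs x=x$ with $1=u$, which is just the unit property. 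Normality $x\ra x=x\rs x=u$ comes for free by pairing $u\le x\ra x$ with $x\ra x\le u$, should the chosen axiom list require it.

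The main obstacle is not any single computation, all of which reduce to $(QB_4)$, Proposition \ref{psBE-20}, and the unit characterization, but the bookkeeping of $(c)\Ri(a)$: one must pin down a specific axiom set equivalent to the dual pseudo BCK-algebras and verify that each of its axioms is recovered once $u$ is known to be the greatest element. Fixing that axiomatization at the outset makes the verification routine, and with $(b)\LR(c)$ and $(a)\LR(c)$ established the three statements are mutually equivalent.
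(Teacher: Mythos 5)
First, a point of reference: the paper itself gives no proof of this proposition --- it is quoted from \cite{Rump2} --- so there is no in-paper argument to compare against, and your proof must stand on the paper's definitions alone. Your $(b)\LR(c)$ half does exactly that and is complete and correct: both directions use only the characterization $x\le y$ iff $u\le x\ra y$ iff $u\le x\rs y$ recalled before the proposition, applied with $y:=u$, together with maximality of $u$.

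The $(a)\LR(c)$ half contains the gap you yourself flag, and it is not purely cosmetic. The paper fixes a specific meaning for ``pseudo BCK-algebra'': Definition \ref{psBE-90-10} (axioms $(psBCI_1)$--$(psBCI_5)$) together with $x\le 1$ for all $x$. Your argument is instead carried out for a posited axiom list in which $1\ra x=1\rs x=x$ appears as a primitive; until that list is proved equivalent to the paper's definition, neither direction of $(a)\LR(c)$ is actually established --- in particular your ``immediate'' implication $(a)\Ri(c)$ is exactly the nontrivial content. The missing derivations are short and use only ingredients already in the paper, so you should write them out. For $(a)\Ri(c)$: from $x\le x$ and $(psBCI_5)$ one gets $x\ra x=x\rs x=1$; then $(psBCI_2)$ with $y:=x$ gives $x\le (x\rs x)\ra x=1\ra x$ and $x\le (x\ra x)\rs x=1\rs x$; conversely $(psBCI_2)$ gives $1\le (1\ra x)\rs x$ and $1\le(1\rs x)\ra x$, so both right-hand sides equal $1$ by boundedness, whence $1\ra x\le x$ and $1\rs x\le x$ by $(psBCI_5)$; antisymmetry then makes the greatest element $1$ a unit, which is $(c)$. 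For $(c)\Ri(a)$: your checks of $(psBCI_1)$ (Proposition \ref{psBE-20}$(5)$), of $(psBCI_5)$ (the unit characterization upgraded by maximality of $u$), and of boundedness are fine, but the paper's definition also requires $(psBCI_2)$, which your axiom list omits; it is free, being Proposition \ref{psBE-20}$(1)$ (equivalently, the instance $x\ra y\le x\ra y$ of $(QB_4)$). With these two patches the proof is correct, and indeed entirely routine given what the paper has already recorded.
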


\begin{remark} \label{psBE-70-10}
Any integral quantum B-algebra $(X,\le,\ra,\rs,u)$ is normal. \\
Indeed, for any $x\in X$, $x\le x$, so $u\le x\ra x$ and $u\le x\rs x$. Since $u$ is the greatest element of $X$, 
we get $x\ra x=x\rs x=u$. 
\end{remark}

A quantum B-algebra $X$ is called \emph{bounded} if $X$ admits a smallest element, denoted by $0$ (\cite{Rump2}).  

\begin{remark} \label{psBE-90}
$(1)$ A bounded quantum B-algebra has also a greatest element $1$. \\
Indeed, let $1=0\ra 0$.  
Since $0\le x\rs 0$, then $x\le 0\ra 0=1$, for all $x\in X$, hence $1$ is a greatest element of $X$. 
From $0\le x\ra 0$ we have $x\le 0\rs 0=1^{\prime}$, so $1^{\prime}$ is also a greatest element. 
But the greatest element is unique: $1\le 1^{\prime}$ and $1^{\prime}\le 1$ imply $1=1^{\prime}$. 
Hence $1=0\ra 0=0\rs 0$ is the greatest element of $X$. \\
$(2)$ $0\ra x=0\rs x=1$, for all $x\in X$. \\
Indeed, from $0\le x$ we get $0\ra 0\le 0\ra x$, so $1\le 0\ra x$, that is, $0\ra x=1$. 
Similarly, $0\rs x=1$. \\
$(3)$ $1\ra 1=1\rs 1=1$. \\
Indeed, we have $1\ra 1=1\ra (0\rs 1)=0\rs (1\ra 1)=1$, by $(2)$. Similarly, $1\rs 1=1$. \\
$(4)$ $x\le 1$ implies $x\ra 1=x\rs 1=1$. \\
Since $x\le 1$ implies $1\ra 1\le x\ra 1$, we have $1\le x\ra 1$. Hence, $x\ra 1=1$, and similarly, $x\rs 1=1$.  
\end{remark}

\begin{definition} \label{psBE-90-10} $\rm($\cite{Dud1}$\rm)$ A \emph{pseudo BCI-algebra} is a structure 
$(X,\le,\ra,\rs,1)$, where $\le$ is binary relation on $X$, $\ra$ and $\rs$ are binary operations on $X$ 
and $1$ is an element of $X$ satisfying the following axioms, for all $x, y, z \in A:$ \\
$(psBCI_1)$ $x \ra y \le (y \ra z) \rs (x \ra z)$, $x \rs y \le (y \rs z) \ra (x \rs z);$ \\
$(psBCI_2)$ $x\le (x\ra y)\rs y$, $x\le (x\rs y)\ra y;$ \\
$(psBCI_3)$ $x\le x;$ \\
$(psBCI_4)$ if $x\le y$ and $y\le x$, then $x=y;$ \\
$(psBCI_5)$ $x\le y$ iff $x\ra y =1$ iff $x\rs y=1$. 
\end{definition}

Every pseudo BCI-algebra satisfying $x\le 1$, for all $\in X$ is a \emph{pseudo BCK-algebra}. 
The pseudo BCK-algebras were intensively studied in \cite{Ior14}, \cite{Ciu2}, \cite{Kuhr6}. 
Pseudo BCI-algebras, and all their classes are normal quantum B-algebras.

$\vspace*{5mm}$

\section{Involutions on quantum B-algebras}

In this section we define the involutive and weakly involutive quantum B-algebras, and investigate their properties.  
We give a characterization of weakly involutive quantum B-algebras, and we prove that any weakly involutive quantum B-algebra is a quantum B-algebra with pseudo-product. 
Additionally, we show that in pointed integral quantum B-algebras $(X,\le)$, the weakly involutive element is
the bottom element of $(X,\le)$.  

\begin{definition} \label{inv-qb-10} Let $(X,\le,\ra,\rs)$ be a quantum B-algebra. An element $d\in X$ is called: \\
$(1)$ \emph{weakly involutive}, if $(x\ra d)\rs d=(x\rs d)\ra d=x$, for all $x\in X;$ \\
$(2)$ \emph{cyclic}, if $x\ra d=x\rs d$, for all $x\in X;$ \\
$(3)$ \emph{involutive}, if $d$ is both weakly involutive and cyclic. 
\end{definition}

\begin{definition} \label{inv-qb-20} The $(X,\le,\ra,\rs,d)$, where $(X,\le,\ra,\rs)$ is a quantum B-algebra and 
$d\in X$ is called a \emph{pointed} quantum B-algebra. A pointed quantum B-algebra is called: \\
$(1)$ \emph{weakly involutive}, if $d$ is weakly involutive; \\
$(2)$ \emph{involutive}, if $d$ is involutive. 
\end{definition}

If $X$ is a pointed quantum B-algebra, denote $x^{-}=x\ra d$ and $x^{\sim}=x\ra d$, for all $x\in X$. 
Obviously, if $d$ is cyclic, then $x^{-}=x^{\sim}$. If $X$ is weakly involutive, then $x^{-\sim}=x^{\sim-}=x$. 

\begin{remark} \label{inv-qb-20-10} Let $(X,\le,\ra,\rs,d,u)$ be a weakly involutive unital quantum B-algebra. Then: \\
$(1)$ $u^{-}=u^{\sim}=d$. \\
$(2)$ $d^{-}=d^{\sim}=u$. \\ 
Indeed, from $u^{-\sim}=u^{\sim-}=u$, we get $(u\ra d)\rs d=(u\rs d)\ra d=u$, so that $d\rs d=d\ra d=u$, that is, 
$d^{-}=d^{\sim}=u$. 
\end{remark}

\begin{proposition} \label{inv-qb-30} Let $(X,\le,\ra,\rs,d)$ be a pointed quantum B-algebra. 
The following hold, for all $x, y\in X:$ \\
$(1)$ $x\le x^{-\sim}$, $x\le x^{\sim-};$ \\
$(2)$ $x^{-\sim-}=x^{-}$, $x^{\sim-\sim}=x^{\sim};$ \\
$(3)$ $x\le y$ implies $y^{-}\le x^{-}$ and $y^{\sim}\le x^{\sim};$ \\
$(4)$ $x\ra y\le y^{-}\rs x^{-}$, $x\rs y\le y^{\sim}\ra x^{\sim};$ \\ 
$(5)$ $x\le y^{-}$ iff $y\le x^{\sim};$ \\
$(6)$ $x\ra y^{\sim}=y\rs x^{-}$, $x\rs y^{-}=y\ra x^{\sim};$ \\
$(7)$ $x\ra y^{-\sim}=y^{-}\rs x^{-}=x^{-\sim}\ra y^{-\sim}$,  
      $x\rs y^{\sim-}=y^{\sim}\ra x^{\sim}=x^{\sim-}\rs y^{\sim-};$ \\
$(8)$ $x\ra y^{\sim}=y^{\sim-}\rs x^{-}=x^{-\sim}\ra y^{\sim}$, 
      $x\rs y^{-}=y^{-\sim}\ra x^{\sim}=x^{\sim-}\rs y^{-};$ \\
$(9)$ if $X$ is unital, then $(x\ra y^{\sim-})^{\sim-}=x\ra y^{\sim-}$ and 
                             $(x\rs y^{-\sim})^{-\sim}=x\rs y^{-\sim}$. 
\end{proposition}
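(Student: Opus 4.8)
The plan is to prove the first identity, $(x\ra y^{\sim-})^{\sim-}=x\ra y^{\sim-}$, in full, and to obtain the second by the mirror argument (interchanging $\ra$ with $\rs$ and $-$ with $\sim$). Set $b:=y^{\sim-}$. The point is that $b$ is a ``$-$-image'': since $b=(y^\sim)^-$, I may write $b=c^-$ with $c:=y^\sim$. First I would record the exchange rule obtained from identity $(6)$ of Proposition~\ref{psBE-20} with $z=d$, namely
\[
 a\rs c^-=c\ra a^\sim \qquad(\text{since } c\ra a^\sim=c\ra(a\rs d)=a\rs(c\ra d)=a\rs c^-),
\]
valid for every $a$.

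The crux is the collapse identity
\[
 (x\ra b)^{\sim-}\rs b=(x\ra b)\rs b .
\]
Writing $P:=(x\ra b)^{\sim-}$ and applying the exchange rule twice, together with part $(2)$ of Proposition~\ref{inv-qb-30} in the form $(x\ra b)^{\sim-\sim}=(x\ra b)^\sim$, I would compute
\[
 P\rs b=P\rs c^-=c\ra P^\sim=c\ra(x\ra b)^\sim=(x\ra b)\rs c^-=(x\ra b)\rs b .
\]
By part $(1)$ of Proposition~\ref{psBE-20} we have $x\le(x\ra b)\rs b$, so the collapse identity gives $x\le(x\ra b)^{\sim-}\rs b$, and $(QB_4)$ then yields $(x\ra b)^{\sim-}\le x\ra b$. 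Since part $(1)$ of Proposition~\ref{inv-qb-30} gives the reverse inequality $x\ra b\le(x\ra b)^{\sim-}$, equality follows.

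I expect the collapse identity to be the main obstacle, precisely because the obvious monotonicity estimate fails: from $x\ra b\le(x\ra b)^{\sim-}$ and the order-reversal of $\rs$ in its first argument one only gets $(x\ra b)^{\sim-}\rs b\le(x\ra b)\rs b$, the wrong direction, so the two sides must be shown equal by explicit computation rather than by comparison. The second identity, $(x\rs y^{-\sim})^{-\sim}=x\rs y^{-\sim}$, follows by the same scheme with $y^{-\sim}=(y^-)^\sim$ a ``$\sim$-image'', using the companion exchange rule $a\ra c^\sim=c\rs a^-$ (again a direct consequence of identity $(6)$), part $(1)$ in the form $x\le(x\rs y)\ra y$, and part $(2)$ in the form $z^{-\sim-}=z^-$. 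It is perhaps worth noting that, although the proposition assumes $X$ unital, this argument never invokes the unit; it relies only on Proposition~\ref{psBE-20} and the earlier parts of Proposition~\ref{inv-qb-30}.
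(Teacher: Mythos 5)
Your argument covers only part $(9)$; that is the right focus, since parts $(1)$--$(8)$ are immediate specializations of Proposition~\ref{psBE-20} and $(QB_4)$ (take $y:=d$ or $z:=d$), and both you and the paper treat them as prior steps when proving $(9)$. Your proof of $(9)$ is correct, but it runs along a genuinely different line from the paper's. The paper leans on unitality: starting from $u\le (x\ra y^{\sim-})\rs (x\ra y^{\sim-})$, it applies the exchange law $x\ra(y\rs z)=y\rs(x\ra z)$ of Proposition~\ref{psBE-20}$(6)$ twice, together with the collapse $a\rs y^{\sim-}=a^{\sim-}\rs y^{\sim-}$ from part $(7)$, to reach $u\le (x\ra y^{\sim-})^{\sim-}\rs (x\ra y^{\sim-})$, and then invokes the unital characterization of the order ($u\le a\rs b$ iff $a\le b$) to conclude $(x\ra y^{\sim-})^{\sim-}\le x\ra y^{\sim-}$. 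You prove the same collapse identity $(x\ra b)^{\sim-}\rs b=(x\ra b)\rs b$ --- note it is literally the instance $a\rs y^{\sim-}=a^{\sim-}\rs y^{\sim-}$ of part $(7)$ with $a:=x\ra b$, so you could have cited $(7)$ rather than re-deriving it from the exchange rule and part $(2)$ --- but then you bypass the unit entirely: from $x\le (x\ra b)\rs b$ (Proposition~\ref{psBE-20}$(1)$) and the collapse identity you get $x\le (x\ra b)^{\sim-}\rs b$, and $(QB_4)$ alone gives $(x\ra b)^{\sim-}\le x\ra b$, with the reverse inequality from part $(1)$. This buys a genuine strengthening: as you observe, your argument never uses $u$, so the identities in $(9)$ hold in every pointed quantum B-algebra and the hypothesis ``$X$ unital'' is superfluous; the paper's proof is marginally shorter but needs the unit at both ends. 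Your side remark about monotonicity is also accurate: antitonicity of $\rs$ in its first argument (Proposition~\ref{psBE-20}$(3)$) yields only $(x\ra b)^{\sim-}\rs b\le (x\ra b)\rs b$, the useless direction, so the equality really must be computed. The mirror case goes through verbatim with the companion exchange rule $a\ra c^{\sim}=c\rs a^{-}$ and $z^{-\sim-}=z^{-}$.
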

\begin{proof}
$(1)$,$(2)$ follow from Proposition \ref{psBE-20}$(1)$,$(4)$, for $y:=d$. \\
$(3)$,$(4)$ follow from Proposition \ref{psBE-20}$(3)$,$(5)$, for $z:=d$. \\
$(5)$ It follows by $(QB_4)$ for $z:=d$. \\
$(6)$ Applying $(4)$,$(1)$ and Proposition \ref{psBE-20}$(3)$, we get 
$x\ra y^{\sim}\le y^{\sim-}\rs x^{-}\le y\rs x^{-}$ and 
$x\rs y^{-}\le y^{-\sim}\ra x^{\sim}\le y\ra x^{\sim}$. 
In these inequalities we change $x$ and $y$ obtaining 
$y\ra x^{\sim}\le x\rs y^{-}$ and $y\rs x^{-}\le x\ra y^{\sim}$.
Hence, $x\ra y^{\sim}=y\rs x^{-}$ and $x\rs y^{-}=y\ra x^{\sim}$. \\
$(7)$ By $(6)$ we have $y\rs x^{-}=x\ra y^{\sim}$. 
Replacing $y$ with $y^{-}$ we get $y^{-}\rs x^{-}=x\ra y^{-\sim}$. 
Replacing $x$ by $x^{-\sim}$ in the last identity and using $(2)$ we have 
$y^{-}\rs x^{-}=x^{-\sim}\ra y^{-\sim}$. 
Thus, $x\ra y^{-\sim}=y^{-}\rs x^{-}=x^{-\sim}\ra y^{-\sim}$. 
Similarly, $x\rs y^{\sim-}=y^{\sim}\ra x^{\sim}=x^{\sim-}\rs y^{\sim-}$. \\
$(8)$ It follows by replacing in $(7)$ $y$ with $y^{\sim}$ and $y$ with $y^{-}$, respectively, and 
applying $(2)$. \\
$(9)$ Applying $(7)$ and Proposition \ref{psBE-20}$(6)$ we get \\
$u\le (x\ra y^{\sim-})\rs (x\ra y^{\sim-})=x\ra ((x\ra y^{\sim-})\rs y^{\sim-})=
x\ra ((x\ra y^{\sim-})^{\sim-}\rs y^{\sim-})=
(x\ra y^{\sim-})^{\sim-}\rs (x\ra y^{\sim-})$. 
Hence, $(x\ra y^{\sim-})^{\sim-}\le x\ra y^{\sim-}$. 
On the other hand, by $(1)$, $x\ra y^{\sim-}\le (x\ra y^{\sim-})^{\sim-}$. 
It follows that $(x\ra y^{\sim-})^{\sim-}=x\ra y^{\sim-}$.  
Similarly, $(x\rs y^{-\sim})^{-\sim}=x\rs y^{-\sim}$.
\end{proof}

The following two propositions are proved following an idea from \cite{Ciu31}. 

\begin{proposition} \label{inv-qb-40} Let $(X,\le,\ra,\rs,d)$ be a weakly involutive quantum B-algebra. 
The following hold, for all $x, y\in X:$ \\
$(1)$ $x\le y$ iff $y^{-} \le x^{-}$ iff $y^{\sim} \le x^{\sim};$ \\
$(2)$ $x^{-}\le y$ implies $y^{\sim}\le x$ and $x^{\sim}\le y$ implies $y^{-}\le x;$ \\
$(3)$ $x\ra y=y^{-}\rs x^{-}$, $x\rs y=y^{\sim}\ra x^{\sim};$ \\
$(4)$ $x^{\sim}\ra y=y^{-}\rs x$, $x^{-}\rs y=y^{\sim}\ra x;$ \\ 
$(5)$ $(x\ra y^{-})^{\sim}=(y\rs x^{\sim})^{-}$.  
\end{proposition}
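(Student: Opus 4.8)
The plan is to derive all five items from Proposition \ref{inv-qb-30}, which already holds for arbitrary pointed quantum B-algebras, by feeding in the extra hypothesis of weak involutivity, namely $x^{-\sim}=x^{\sim-}=x$ for all $x$. This identity makes the maps $x\mapsto x^{-}$ and $x\mapsto x^{\sim}$ mutually inverse, order-reversing bijections, and the bulk of the work is simply to upgrade the one-sided inequalities of Proposition \ref{inv-qb-30} to equalities by cancelling iterated involutions. For (1), the forward implications are exactly Proposition \ref{inv-qb-30}(3); for the converse, if $y^{-}\le x^{-}$ then applying Proposition \ref{inv-qb-30}(3) once more gives $(x^{-})^{\sim}\le (y^{-})^{\sim}$, i.e. $x\le y$ by weak involutivity, and the $\sim$-version is symmetric. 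Item (2) is immediate in the same spirit: from $x^{-}\le y$, Proposition \ref{inv-qb-30}(3) yields $y^{\sim}\le (x^{-})^{\sim}=x$, and from $x^{\sim}\le y$ it yields $y^{-}\le (x^{\sim})^{-}=x$.

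Item (3) is where weak involutivity first does genuine work. Proposition \ref{inv-qb-30}(4) already gives $x\ra y\le y^{-}\rs x^{-}$. For the reverse inequality I would apply the second inequality of Proposition \ref{inv-qb-30}(4), namely $a\rs b\le b^{\sim}\ra a^{\sim}$, with $a:=y^{-}$ and $b:=x^{-}$; since $(x^{-})^{\sim}=x$ and $(y^{-})^{\sim}=y$ this reads $y^{-}\rs x^{-}\le x\ra y$, and antisymmetry gives the first identity, the second $x\rs y=y^{\sim}\ra x^{\sim}$ being dual. Item (4) then follows by pure substitution into (3): $x^{\sim}\ra y=y^{-}\rs (x^{\sim})^{-}=y^{-}\rs x$ and $x^{-}\rs y=y^{\sim}\ra (x^{-})^{\sim}=y^{\sim}\ra x$, again using $(x^{\sim})^{-}=(x^{-})^{\sim}=x$.

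The real obstacle is (5), $(x\ra y^{-})^{\sim}=(y\rs x^{\sim})^{-}$, because — unlike (1)--(4) — it cannot be reached by a single application of (3) or (4): rewriting either side through (3) merely toggles $\ra\leftrightarrow\rs$ and returns to the starting expression, leaving the outer involution untouched. Applying $\sim$ to both sides and using $(y\rs x^{\sim})^{-\sim}=y\rs x^{\sim}$ shows that (5) is equivalent to $(x\ra y^{-})^{\sim\sim}=y\rs x^{\sim}$, so the statement genuinely concerns the double involution $x\mapsto x^{\sim\sim}$ rather than a single one. The plan is therefore to compute the right-hand side as $y\rs x^{\sim}=x^{\sim\sim}\ra y^{\sim}$ (taking $q:=y$ and $p:=x^{\sim\sim}$ in the identity $q\rs p^{-}=p\ra q^{\sim}$ of Proposition \ref{inv-qb-30}(6), since $(x^{\sim\sim})^{-}=x^{\sim}$), and on the other side to establish the compatibility $(x\ra y^{-})^{\sim\sim}=x^{\sim\sim}\ra (y^{-})^{\sim\sim}=x^{\sim\sim}\ra y^{\sim}$, where $(y^{-})^{\sim\sim}=y^{\sim}$ follows from $y^{-\sim}=y$; the two right-hand sides then coincide.

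The delicate point, which I expect to be the crux of the whole proposition, is exactly this distribution of the double involution over the implication, $(x\ra y^{-})^{\sim\sim}=x^{\sim\sim}\ra (y^{-})^{\sim\sim}$. I anticipate that it will require the exchange identity $x\ra (y\rs z)=y\rs (x\ra z)$ of Proposition \ref{psBE-20}(6) in an essential way, together with weak involutivity, rather than (3) and (4) alone, precisely because those identities permute $\ra$ and $\rs$ without acting on the iterated $\sim\sim$. Once this compatibility is in hand, (5) is just the matching of the two computed normal forms, and the proposition is complete.
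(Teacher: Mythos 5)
Your items (1)--(4) are correct and essentially follow the paper's route: the paper reads (3) and (4) off Proposition \ref{inv-qb-30}(7), whereas you obtain (3) from the two inequalities of Proposition \ref{inv-qb-30}(4) plus antisymmetry and then get (4) by substitution into (3); the difference is cosmetic.

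Item (5), however, is not proved, and it is the one item carrying real content (it is exactly what makes the pseudo-product of Theorem \ref{inv-qb-90} well defined). Your reduction is sound but circular: the ``compatibility'' $(x\ra y^{-})^{\sim\sim}=x^{\sim\sim}\ra (y^{-})^{\sim\sim}$ that you isolate as the crux is, by the very computation you give ($x^{\sim\sim}\ra y^{\sim}=y\rs x^{\sim}$ via Proposition \ref{inv-qb-30}(6), then cancellation of one involution using $a^{\sim-}=a$), equivalent to (5) itself. So the proposal ends exactly where it began, with the key step replaced by the expectation that Proposition \ref{psBE-20}(6) ``will be required''; that is not a proof. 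The paper closes this gap with an order-theoretic argument you never invoke: for every $z\in X$, $(x\ra y^{-})^{\sim}\le z$ iff $z^{-}\le x\ra y^{-}$ (by (1) and $a^{\sim-}=a$) iff $x\le z^{-}\rs y^{-}=y\ra z$ (by $(QB_4)$ and (3)) iff $y\le x\rs z=z^{\sim}\ra x^{\sim}$ (by $(QB_4)$ and (3)) iff $z^{\sim}\le y\rs x^{\sim}$ iff $(y\rs x^{\sim})^{-}\le z$ (by (1) and $a^{\sim-}=a$); since both elements have the same principal up-set, antisymmetry gives the equality. If you prefer to finish along your own lines, you must establish the two inequalities separately: for instance, $(y\rs x^{\sim})^{-}\le (x\ra y^{-})^{\sim}$ unwinds, via (1), $(QB_4)$ and Proposition \ref{inv-qb-30}(6), to $x\le (x\ra y^{-})\rs y^{-}$, which is Proposition \ref{psBE-20}(1), and the converse inequality unwinds symmetrically to $y\le (y\rs x^{\sim})\ra x^{\sim}$. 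Some such argument has to be written out; as it stands, (5) is a genuine gap.
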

\begin{proof}
$(1)$ It follows by Proposition \ref{inv-qb-30}$(3)$. \\
$(2)$ It follows by $(1)$. \\
$(3)$ It follows by Proposition \ref{inv-qb-30}$(7)$. \\
$(4)$ Applying Proposition \ref{inv-qb-30}$(7)$, we get  
$x^{\sim}\ra y=x^{\sim}\ra y^{-\sim}=y^{-}\rs x^{\sim-}=y^{-}\rs x$. Similarly, $x^{-}\rs y=y^{\sim}\ra x$. \\
$(5)$ For any $z\in X$, applying $(QB_4)$ and $(3)$ we have: \\
$\hspace*{2cm}$ $(x\ra y^{-})^{\sim}\le z$ iff $z^{-}\le x\ra y^{-}$ iff $x\le z^{-}\rs y^{-}=y\ra z$ \\
$\hspace*{4.7cm}$ iff $y\le x\rs z=z^{\sim}\ra x^{\sim}$ iff $z^{\sim}\le y\rs x^{\sim}$ \\ 
$\hspace*{4.7cm}$ iff $(y\rs x^{\sim})^{-}\le z$. \\
Hence, $(x\ra y^{-})^{\sim}=(y\rs x^{\sim})^{-}$. 
\end{proof}

\begin{proposition} \label{inv-qb-50} Let $(X,\le,\ra,\rs,d,u)$ be a pointed unital quantum B-algebra. 
The following are equivalent: \\
$(a)$ $X$ is weakly involutive;\\
$(b)$ $x\ra y=y^{-}\rs x^{-}$, $x\rs y=y^{\sim}\ra x^{\sim};$ \\
$(c)$ $x^{\sim}\ra y=y^{-}\rs x$, $x^{-}\rs y=y^{\sim}\ra x;$ \\
$(d)$ $x^{-}\le y$ implies $y^{\sim}\le x$ and $x^{\sim}\le y$ implies $y^{-}\le x$. 
\end{proposition}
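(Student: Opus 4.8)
The plan is to close the equivalences by exploiting that the forward implications are essentially free. Indeed, $(a)\Ri(b)$ is exactly Proposition~\ref{inv-qb-40}$(3)$, $(a)\Ri(c)$ is Proposition~\ref{inv-qb-40}$(4)$, and $(a)\Ri(d)$ is Proposition~\ref{inv-qb-40}$(2)$. Hence the real content lies in the three reverse steps $(b)\Ri(a)$, $(c)\Ri(d)$ and $(d)\Ri(a)$; once these are in hand, $(a)\LR(b)$ and $(a)\LR(c)\LR(d)$ both follow and the proposition is proved.

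For $(b)\Ri(a)$, the key device is to specialize the two identities of $(b)$ at the unit $u$. Since $u\ra y=u\rs y=y$ and, by unitality, $u^{-}=u\ra d=d$ and $u^{\sim}=u\rs d=d$, putting $x:=u$ in $x\ra y=y^{-}\rs x^{-}$ gives $y=y^{-}\rs d=y^{-\sim}$, while putting $x:=u$ in $x\rs y=y^{\sim}\ra x^{\sim}$ gives $y=y^{\sim}\ra d=y^{\sim-}$. These are precisely the two weak-involutivity identities $(x\ra d)\rs d=(x\rs d)\ra d=x$, so $(a)$ holds.

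For $(c)\Ri(d)$, I would rely on the unital characterization $a\le b$ iff $u\le a\ra b$ iff $u\le a\rs b$ recorded in the Preliminaries. If $x^{-}\le y$, then $u\le x^{-}\rs y=y^{\sim}\ra x$ by the second identity of $(c)$, whence $y^{\sim}\le x$; symmetrically, if $x^{\sim}\le y$, then $u\le x^{\sim}\ra y=y^{-}\rs x$ by the first identity of $(c)$, whence $y^{-}\le x$. This is exactly $(d)$.

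Finally, for $(d)\Ri(a)$, I would combine reflexivity with Proposition~\ref{inv-qb-30}$(1)$. Applying the first clause of $(d)$ with $y:=x^{-}$ (using $x^{-}\le x^{-}$) yields $x^{-\sim}\le x$, and the second clause with $y:=x^{\sim}$ yields $x^{\sim-}\le x$; together with the always-valid inequalities $x\le x^{-\sim}$ and $x\le x^{\sim-}$ from Proposition~\ref{inv-qb-30}$(1)$ and antisymmetry of $\le$, this gives $x^{-\sim}=x^{\sim-}=x$, i.e. $(a)$. None of these steps is a serious obstacle; the one point that must be got right is recognizing that unitality already forces $u^{-}=u^{\sim}=d$, since it is exactly this fact that makes the substitution $x:=u$ in $(b)$ collapse the right-hand sides to the weak-involutivity identities.
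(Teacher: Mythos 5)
Your proof is correct. Its two substantive links coincide exactly with the paper's: $(c)\Ri(d)$ via the unital characterization $a\le b$ iff $u\le a\rs b$ iff $u \le a \ra b$, and $(d)\Ri(a)$ via reflexivity ($x^{-}\le x^{-}$, $x^{\sim}\le x^{\sim}$) combined with Proposition \ref{inv-qb-30}$(1)$ and antisymmetry. Where you genuinely differ is in how the equivalence graph is closed around $(b)$: the paper proves the single cycle $(a)\Ri(b)\Ri(c)\Ri(d)\Ri(a)$, declaring $(b)\Ri(c)$ to be obvious, whereas you prove $(a)\LR(b)$ as a separate two-way equivalence (Proposition \ref{inv-qb-40}$(3)$ forward, the substitution $x:=u$ backward) and then close the three-cycle $(a)\Ri(c)\Ri(d)\Ri(a)$ using Proposition \ref{inv-qb-40}$(4)$ for the first arrow. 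Your $(b)\Ri(a)$ argument is a worthwhile addition rather than a mere reshuffling: the paper's ``obvious'' step $(b)\Ri(c)$ is in fact not immediate, since substituting $x^{\sim}$ for $x$ in the first identity of $(b)$ gives $x^{\sim}\ra y=y^{-}\rs x^{\sim-}$, and collapsing $x^{\sim-}$ to $x$ already presupposes weak involutivity --- which is most cleanly extracted from $(b)$ by exactly your unit trick: unitality alone gives $u^{-}=u\ra d=d$ and $u^{\sim}=u\rs d=d$, whence $y=u\ra y=y^{-}\rs u^{-}=y^{-\sim}$ and $y=u\rs y=y^{\sim}\ra u^{\sim}=y^{\sim-}$. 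So your route makes explicit a point the paper glosses over, at the cost of proving one extra implication.
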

\begin{proof}
$(a)\Rightarrow (b)$ follows by Proposition \ref{inv-qb-40}$(3)$. \\
$(b)\Rightarrow (c)$ is obvious. \\
$(c)\Rightarrow (d)$ From $x^{-}\le y$ we get $u\le x^{-}\rs y=y^{\sim}\ra x$, thus $y^{\sim}\le x$. 
Similarly, $x^{\sim}\le y$ implies $y^{-}\le x$. \\
$(d)\Rightarrow (a)$ Since $x^{-}\le x^{-}$, we get $x^{-\sim}\le x$, so that $x^{-\sim}=x$. 
Similarly, $x^{\sim-}=x$, hence $X$ is weakly involutive.  
\end{proof}

\begin{proposition} \label{inv-qb-60} Let $(X,\le,\ra,\rs,d,u)$ be a weakly involutive integral quantum B-algebra. 
Then $d$ is the bottom element of $(X,\le)$. 
\end{proposition}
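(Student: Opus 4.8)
The goal is to show that $d\le x$ for every $x\in X$, i.e.\ that $d$ is the smallest element of $(X,\le)$. Two features of the hypotheses do all the work: integrality forces $u$ to be the \emph{greatest} element, while weak involutivity makes the two negation maps $x\mapsto x^{-}$ and $x\mapsto x^{\sim}$ mutually inverse, order-reversing bijections. Combining these, the top element $u$ gets sent by a negation to the distinguished point $d$, which then turns out to lie below everything.

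Concretely, I would first invoke Proposition \ref{psBE-70}: since $X$ is integral it is a pseudo BCK-algebra, so $u$ is the greatest element of $X$ and hence $x\le u$ for all $x\in X$. Next I would record from Remark \ref{inv-qb-20-10}$(2)$ that $d^{-}=d^{\sim}=u$; this is legitimate because $X$ is weakly involutive and unital, the latter being guaranteed by integrality. Now fix an arbitrary $x\in X$. Since $u$ is the greatest element we have $x^{-}\le u=d^{-}$, and by the contraposition equivalence of Proposition \ref{inv-qb-40}$(1)$, namely $a\le b$ iff $b^{-}\le a^{-}$, the relation $x^{-}\le d^{-}$ is equivalent to $d\le x$. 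As $x$ was arbitrary, $d\le x$ for every $x\in X$, which is exactly the claim.

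There is essentially no deep obstacle here; the one point requiring care is that the equivalence $a\le b\Leftrightarrow b^{-}\le a^{-}$ invoked in the last step is itself a consequence of weak involutivity (Proposition \ref{inv-qb-40}$(1)$) and would fail for a merely pointed quantum B-algebra, where $(\cdot)^{-}$ is only order-reversing but not injective. Equivalently, one could phrase the argument via surjectivity: weak involutivity gives $y=(y^{\sim})^{-}$ for each $y$, so every element is a negation; then applying the order-reversal $u^{-}\le x^{-}$ of Proposition \ref{inv-qb-30}$(3)$ (from $x\le u$) together with $u^{-}=d$ from Remark \ref{inv-qb-20-10}$(1)$ yields $d\le y$ for all $y$. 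Either route reduces the statement to the single instance $x\le u$ supplied by integrality, and I would present the first, shorter version.
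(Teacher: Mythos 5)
Your proposal is correct and takes essentially the same approach as the paper: the paper deduces $d\le x^{-}$ from $x\le u$, order-reversal and $u^{-}=d$, then uses weak involutivity to observe that every element is of the form $x^{-}$ — which is precisely your second variant. Your preferred first variant merely repackages that same weak-involutivity step inside the biconditional of Proposition \ref{inv-qb-40}$(1)$ (whose proof is exactly the double-negation argument) instead of invoking surjectivity of the negation, so the two presentations are equivalent.
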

\begin{proof}
Since $X$ is integral, $u$ is the top element. For any $x\in X$ we have $x\le u$, so $u^{-}\le x^{-}$, 
that is, $u\ra d\le x^{-}$. Thus, $d\le x^{-}$. 
On the other hand, since $X$ is weakly involutive, it is easy to see that 
$\{x^{-}\mid x\in X\}=\{x^{\sim}\mid x\in X\}=X=\{x\in X\mid x^{-\sim}=x^{\sim-}=x\}$ 
(Obviously, $\{x^{-}\mid x\in X\}, \{x^{\sim}\mid x\in X\}\subseteq X$. Conversely, if $x\in X$, then 
$x=(x^{\sim})^{-}\in \{x^{-}\mid x\in X\}$ and $x=(x^{-})^{\sim}\in \{x^{\sim}\mid x\in X\}$, so that 
$X\subseteq \{x^{-}\mid x\in X\}, \{x^{\sim}\mid x\in X\}$). 
Hence, for any $y\in X$, there exists $x\in X$ such that $y=x^{-}$, and so $d\le y$. 
It follows that $d$ is the bottom element of $(X\le)$. 
\end{proof}

\begin{definition} \label{inv-qb-70} $\rm($\cite{Ciu36}$\rm)$ A \emph{quantum B-algebra with pseudo-product condition} or a quantum B(pP)-algebra for short, is a quantum B-algebra $(X,\le,\ra,\rs)$ satisfying the condition \\
(pP) for all $x,y\in X$, $x\odot y$ exists, where \\
$\hspace*{2cm}$ $x\odot y=\min\{z\in X \mid x\le y\ra z\}=\min\{z\in X \mid y\le x\rs z\}$. 
\end{definition}

Clearly, if $x\odot y$ exists, then it is unique and it satisfies $x\odot y\le z$ iff $x\le y\ra z$ iff $y\le x\rs z$. 
A quantum B(pP)-algebra is denoted by $(X,\le,\ra,\rs,\odot)$. 

\begin{proposition} \label{inv-qb-80} $\rm($\cite{Ciu36}$\rm)$ Let $(X,\le,\ra,\rs,\odot)$ be a quantum B(pP)-algebra. Then the following hold, for all $x, y, z\in X:$ \\
$(1)$ $\odot$ is associative; \\
$(2)$ $(x\ra y)\odot x\le y$, $x\odot (x\rs y)\le y;$ \\
$(3)$ $x\ra (y\ra z)=x\odot y\ra z$, $x\rs (y\rs z)=y\odot x\rs z;$ \\
$(4)$ $x\le y$ implies $x\odot z\le y\odot z$, $z\odot x\le z\odot y;$ \\
$(5)$ $(y\ra z)\odot (x\ra y)\le x\ra z$, $(x\rs y)\odot (y\rs z)\le x\rs z;$ \\
$(6)$ $x\ra y\le x\odot z\ra y\odot z$, $x\rs y\le z\odot x\rs z\odot y$. 
\end{proposition}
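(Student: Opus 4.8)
The plan is to derive the whole proposition from the residuation property recorded right after Definition~\ref{inv-qb-70}, which I abbreviate as $(\ast)$:
\begin{equation*}
x\odot y\le z \iff x\le y\ra z \iff y\le x\rs z. \tag{$\ast$}
\end{equation*}
The only non-formal ingredient will be that $X$ is a quantum B-algebra, entering through the exchange identity $x\ra(y\rs z)=y\rs(x\ra z)$ of Proposition~\ref{psBE-20}$(6)$. I will repeatedly use that $(X,\le)$ is a poset: if $a\le w\iff b\le w$ for all $w$, then $a=b$ (test $w=a$ and $w=b$).

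I would dispose of $(2)$ first, since it is immediate from reflexivity: applying $(\ast)$ to $x\ra y\le x\ra y$ gives $(x\ra y)\odot x\le y$, and applying the right-hand equivalence of $(\ast)$ to $x\rs y\le x\rs y$ gives $x\odot(x\rs y)\le y$.

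The crux is associativity $(1)$, and I expect it to be the only genuinely structural step. I would test each product against an arbitrary $w$ using $(\ast)$ twice. On one side, $x\odot(y\odot z)\le w$ iff $y\odot z\le x\rs w$ iff $y\le z\ra(x\rs w)$; on the other, $(x\odot y)\odot z\le w$ iff $x\odot y\le z\ra w$ iff $y\le x\rs(z\ra w)$. These two conditions coincide precisely because Proposition~\ref{psBE-20}$(6)$, with variables renamed, gives $z\ra(x\rs w)=x\rs(z\ra w)$. Hence $x\odot(y\odot z)\le w\iff(x\odot y)\odot z\le w$ for every $w$, and the poset test yields $(1)$. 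The only delicate point is to route the two residuations so that the exchange identity is exactly what closes the gap; this is where the quantum B-algebra structure is indispensable and bare residuation would not suffice.

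With $(1)$ and $(2)$ available, the rest is mechanical. For $(3)$ I test against $w$: $w\le x\ra(y\ra z)$ iff $w\odot x\le y\ra z$ iff $(w\odot x)\odot y\le z$ iff (by $(1)$) $w\odot(x\odot y)\le z$ iff $w\le(x\odot y)\ra z$, and the second identity is the $\rs$-dual. For $(4)$, from $y\odot z\le y\odot z$ I get $y\le z\ra(y\odot z)$, so $x\le y$ forces $x\le z\ra(y\odot z)$, i.e.\ $x\odot z\le y\odot z$; the other inequality is dual. Finally $(5)$ and $(6)$ collapse to $(2)$ after one use of $(1)$ and $(4)$: for $(5)$, $\bigl((y\ra z)\odot(x\ra y)\bigr)\odot x=(y\ra z)\odot\bigl((x\ra y)\odot x\bigr)\le(y\ra z)\odot y\le z$, so $(\ast)$ gives $(y\ra z)\odot(x\ra y)\le x\ra z$; and for $(6)$, $(x\ra y)\odot(x\odot z)=\bigl((x\ra y)\odot x\bigr)\odot z\le y\odot z$, so $(\ast)$ gives $x\ra y\le(x\odot z)\ra(y\odot z)$. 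The $\rs$-versions follow identically with the two residua interchanged.
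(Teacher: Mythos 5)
Your proof is correct in every step, but there is nothing in the paper to compare it against: Proposition~\ref{inv-qb-80} is stated with a citation to \cite{Ciu36} (a separate, submitted work) and the paper gives no proof of it at all. Your derivation is therefore a genuine supplement, and it is self-contained within the material the paper does provide, namely the residuation equivalence recorded after Definition~\ref{inv-qb-70} and the exchange identity $x\ra(y\rs z)=y\rs(x\ra z)$ of Proposition~\ref{psBE-20}(6). You also located the structural content correctly: associativity is the only item that needs the quantum B-algebra axioms beyond bare residuation, and your poset test (if $a\le w\iff b\le w$ for all $w$, then $a=b$) combined with the renamed exchange law $z\ra(x\rs w)=x\rs(z\ra w)$ closes it exactly. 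The remaining items are then routine: (2) from reflexivity, (3) and (4) by one or two applications of the residuation equivalence, and (5), (6) from (1), (2) and (4) together. Two details that deserve to be spelled out if this were written up in full: in (5) and (6) you use (4) (monotonicity of $\odot$ in each argument) as well as (1) and (2), so the order of deduction matters and is respected in your presentation; and the $\rs$-dual of (3) must come out as $x\rs(y\rs z)=(y\odot x)\rs z$ with the factors reversed, matching the statement, which your computation does yield even though the remark ``the second identity is the $\rs$-dual'' glosses over the reversal.
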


\begin{theorem} \label{inv-qb-90} Any weakly involutive quantum B-algebra is a quantum B(pP)-algebra. 
\end{theorem}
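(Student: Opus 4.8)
The plan is to exhibit an explicit closed form for the pseudo-product and then verify that it realizes the minimum required by condition (pP). Since $X$ is weakly involutive, the maps $x\mapsto x^{-}$ and $x\mapsto x^{\sim}$ are mutually inverse, order-reversing bijections of $X$ (this is $x^{-\sim}=x^{\sim-}=x$ together with Proposition \ref{inv-qb-40}(1)), and I have at my disposal the identities of Proposition \ref{inv-qb-40}: in particular $x\ra y=y^{-}\rs x^{-}$, the order-reversal $a\le b\Leftrightarrow b^{\sim}\le a^{\sim}$, and $(x\ra y^{-})^{\sim}=(y\rs x^{\sim})^{-}$.

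First I would fix $x,y\in X$ and propose
$$x\odot y:=(x\ra y^{-})^{\sim}.$$
Because of axiom $(QB_4)$, the two sets occurring in (pP), namely $\{z\mid x\le y\ra z\}$ and $\{z\mid y\le x\rs z\}$, are literally the same set; hence it suffices to show that the proposed element is the least element of the first one, and the agreement of the two minima demanded by Definition \ref{inv-qb-70} is then automatic.

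The central computation is a chain of equivalences for an arbitrary $z\in X$. Starting from $x\le y\ra z$, I would rewrite $y\ra z=z^{-}\rs y^{-}$ by Proposition \ref{inv-qb-40}(3), apply $(QB_4)$ to the resulting $x\le z^{-}\rs y^{-}$ to obtain $z^{-}\le x\ra y^{-}$, and finally apply the order-reversing equivalence of Proposition \ref{inv-qb-40}(1) together with weak involutivity $z^{-\sim}=z$ to reach $(x\ra y^{-})^{\sim}\le z$. This yields
$$x\le y\ra z\iff (x\ra y^{-})^{\sim}\le z\quad\text{for all }z\in X,$$
so $(x\ra y^{-})^{\sim}$ is simultaneously a member of $\{z\mid x\le y\ra z\}$ (take $z$ equal to it, using $x\le y\ra(x\odot y)$) and a lower bound of it, hence its minimum. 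Thus $x\odot y$ exists; as $x,y$ were arbitrary, $X$ satisfies (pP). I would also record, using Proposition \ref{inv-qb-40}(5), that running the same argument through the $\rs$-side produces the symmetric candidate $(y\rs x^{\sim})^{-}$, which coincides with $(x\ra y^{-})^{\sim}$, confirming the consistency of the two descriptions of $\odot$.

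The only genuine obstacle is guessing the correct closed form $(x\ra y^{-})^{\sim}$ for the pseudo-product; once this candidate is in hand, every remaining step is a mechanical substitution of the already-established identities of Proposition \ref{inv-qb-40}, and no essentially new inequality has to be produced. I note in particular that unitality of $X$ is never invoked, in accordance with the hypotheses of the theorem.
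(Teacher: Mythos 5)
Your proof is correct and follows essentially the same route as the paper: the same candidate $x\odot y=(x\ra y^{-})^{\sim}$, the same chain of equivalences $x\le y\ra z\iff z^{-}\le x\ra y^{-}\iff (x\ra y^{-})^{\sim}\le z$ via Proposition \ref{inv-qb-40}, and the same minimum verification. The only cosmetic difference is that you invoke $(QB_4)$ once to identify the two defining sets and run a single equivalence chain, whereas the paper runs the second chain explicitly through $(y\rs x^{\sim})^{-}$; the substance is identical.
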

\begin{proof}
Let $(X,\le,\ra,\rs,d)$ be a weakly involutive quantum B-algebra.
Define the operation $\odot$ by $x\odot y=(x\ra y^{-})^{\sim}=(y\rs x^{\sim})^{-}$, for all $x,y\in X$. \\ 
First of all, for all $x,y\in X$, we have $x\odot y\le z$ iff $x\le y\ra z$ iff $y\le x\rs z$. \\
Indeed, $x\odot y\le z$ iff $(x\ra y^{-})^{\sim}\le z$ iff $z^{-}\le x\ra y^{-}$ 
                        iff $x\le z^{-}\rs y^{-}=y\ra z$. \\
Similarly, $x\odot y\le z$ iff $(y\rs x^{\sim})^{-}\le z$ iff $z^{\sim}\le y\rs x^{\sim}$ 
                           iff $y\le z^{\sim}\ra x^{\sim}=x\rs z$. \\
Now, we show that $x\odot y=\min \{z\mid x\le y\ra z\}$. 
Indeed, from $x\odot y\le x\odot y$ we get $x\le y\ra x\odot y$. If $z\in X$ verifies $x\le y\ra z$, then 
$x\odot y\le z$. Hence, $x\odot y=\min \{z\mid x\le y\ra z\}$. 
Similarly, $x\odot y=\min \{z\mid y\le x\rs z\}$.                     
\end{proof}

\begin{proposition} \label{inv-qb-100} Let $(X,\le,\ra,\rs,\odot,d)$ be a weakly involutive quantum B-algebra. 
The following hold, for all $x,y\in X:$ \\
$(1)$ $x\le y^{-}$ iff $x\odot y\le d$  and $x\le y^{\sim}$ iff $y\odot x\le d;$ \\ 
$(2)$ $y^{-}\odot (x\ra y)\le x^{-}$, $(x\rs y)\odot y^{\sim}\le x^{\sim};$ \\
$(3)$ $x^{-}\odot x\le d$, $x\odot x^{\sim}\le d;$ \\
$(4)$ $(x\odot y)^{-}=x\ra y^{-}$, $(y\odot x)^{\sim}=x\rs y^{\sim};$ \\
$(5)$ $(x^{-}\odot y^{-})^{\sim}=(x^{\sim}\odot y^{\sim})^{-};$ \\
$(6)$ $x\odot y=(x\ra y^{-})^{\sim}=(y\rs x^{\sim})^{-};$ \\ 
$(7)$ $x\ra y=(x\odot y^{\sim})^{-}$, $x\rs y=(y^{-}\odot x)^{\sim}$. 
\end{proposition}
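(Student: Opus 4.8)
The plan is to run everything off the adjunction $x\odot y\le z \Leftrightarrow x\le y\ra z \Leftrightarrow y\le x\rs z$ recorded in Theorem~\ref{inv-qb-90}, together with the two explicit descriptions $x\odot y=(x\ra y^{-})^{\sim}=(y\rs x^{\sim})^{-}$ and the involutive cancellations $x^{-\sim}=x^{\sim-}=x$. Part $(6)$ is literally the formula for $\odot$ from Theorem~\ref{inv-qb-90}, so I would simply restate it. For $(1)$ I would instantiate the adjunction at $z:=d$: since $y\ra d=y^{-}$ and $y\rs d=y^{\sim}$, this gives $x\odot y\le d \Leftrightarrow x\le y^{-}$ and $y\odot x\le d \Leftrightarrow x\le y^{\sim}$. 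Part $(3)$ is then the special case of $(1)$ with $y:=x$ (reading $x^{-}\odot x\le d$ off $x^{-}\le x^{-}$, and $x\odot x^{\sim}\le d$ off $x^{\sim}\le x^{\sim}$).

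For $(2)$ I would apply Proposition~\ref{inv-qb-80}$(5)$ at $z:=d$: the inequality $(y\ra d)\odot(x\ra y)\le x\ra d$ becomes $y^{-}\odot(x\ra y)\le x^{-}$, and $(x\rs y)\odot(y\rs d)\le x\rs d$ becomes $(x\rs y)\odot y^{\sim}\le x^{\sim}$. For $(4)$ I would feed the two forms of $\odot$ through the involutions: $(x\odot y)^{-}=\bigl((x\ra y^{-})^{\sim}\bigr)^{-}=(x\ra y^{-})^{\sim-}=x\ra y^{-}$, and symmetrically, from $y\odot x=(x\rs y^{\sim})^{-}$ one gets $(y\odot x)^{\sim}=(x\rs y^{\sim})^{-\sim}=x\rs y^{\sim}$. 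Part $(7)$ then falls out of $(4)$ by substitution: replacing $y$ by $y^{\sim}$ gives $(x\odot y^{\sim})^{-}=x\ra y^{\sim-}=x\ra y$, and replacing $y$ by $y^{-}$ in the second identity gives $(y^{-}\odot x)^{\sim}=x\rs y^{-\sim}=x\rs y$.

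The only step needing real care is $(5)$, and here the difficulty is purely bookkeeping: one must pick, on each side, the form of $\odot$ for which the outer involution cancels cleanly. On the left I would use $a\odot b=(b\rs a^{\sim})^{-}$ with $a:=x^{-},\ b:=y^{-}$, obtaining $x^{-}\odot y^{-}=(y^{-}\rs x^{-\sim})^{-}=(y^{-}\rs x)^{-}$, so that $(x^{-}\odot y^{-})^{\sim}=(y^{-}\rs x)^{-\sim}=y^{-}\rs x$. On the right I would use $a\odot b=(a\ra b^{-})^{\sim}$ with $a:=x^{\sim},\ b:=y^{\sim}$, obtaining $x^{\sim}\odot y^{\sim}=(x^{\sim}\ra y^{\sim-})^{\sim}=(x^{\sim}\ra y)^{\sim}$, so that $(x^{\sim}\odot y^{\sim})^{-}=(x^{\sim}\ra y)^{\sim-}=x^{\sim}\ra y$. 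The asserted identity thus reduces to $y^{-}\rs x=x^{\sim}\ra y$, which is exactly Proposition~\ref{inv-qb-40}$(4)$; with that the proof closes. I expect no conceptual obstacle anywhere — the whole proposition is a sequence of adjunction instances and involution cancellations — the main risk being direction errors in juggling $(\cdot)^{-}$ against $(\cdot)^{\sim}$ and in selecting the correct form of $\odot$ at each stage.
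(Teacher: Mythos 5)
Your proposal is correct and takes essentially the same route as the paper's proof: $(2)$ from Proposition \ref{inv-qb-80}$(5)$ at $z:=d$, $(1)$ and $(3)$ from the adjunction at $z:=d$, $(4)$, $(6)$, $(7)$ by involution cancellation, and $(5)$ reduced on both sides to the identity $y^{-}\rs x=x^{\sim}\ra y$, which is Proposition \ref{inv-qb-40}$(4)$ — exactly the key step the paper uses. The only cosmetic difference is order of derivation: the paper obtains $(4)$ from Proposition \ref{inv-qb-80}$(3)$ and then deduces $(6)$ and $(7)$ from $(4)$, whereas you take $(6)$ as the formula from Theorem \ref{inv-qb-90} (legitimate, since the pseudo-product is unique) and deduce $(4)$ from it.
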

\begin{proof}
$(1)$ It is straightforward. \\
$(2)$ It follows by Proposition \ref{inv-qb-80}$(5)$ for $z:=d$. \\
$(3)$ From $x\ra d\le x\ra d$ we have $(x\ra d)\odot x\le d$, so that $x^{-}\odot x\le d$. 
Similarly, $x\rs d\le x\rs d$ implies $x\odot (x\rs d)\le d$, thus $x\odot x^{\sim}\le d$. \\
$(4)$ Applying Proposition \ref{inv-qb-80}$(3)$, we get 
$(x\odot y)^{-}=x\odot y\ra d=x\ra (y\ra d)=x\ra y^{-}$. 
Similarly, $y\odot x)^{\sim}=y\odot x\rs d=x\rs (y\rs d)=x\rs y^{\sim}$. \\
$(5)$ Aplying $(4)$ and Proposition \ref{inv-qb-40}$(4)$, we get: 
$(x\odot y)^{-}=y^{-}\rs x^{-\sim}=y^{-}\rs x=x^{\sim}\ra y=x^{\sim}\ra y^{\sim-}=(x^{\sim}\odot y^{\sim})^{-}$. \\
$(6)$,$(7)$ follow from $(4)$. 
\end{proof}

Let $(X,\le,\ra,\rs,\odot,d)$ be a weakly involutive quantum B-algebra. Based on Proposition \ref{inv-qb-50}, 
we define the operation $\oplus$ by: \\
$(S)$ $x\oplus y=y^{\sim}\ra x=x^{-}\rs y$, for all $x,y \in X$. 

\begin{lemma} \label{inv-qb-100-10} Let $(X,\le,\ra,\rs,\odot,d)$ be a weakly involutive quantum B-algebra. 
The following hold, for all $x,y\in X:$ \\
$(1)$ $x\odot y=(y^{-}\oplus x^{-})^{\sim}=(y^{\sim}\oplus x^{\sim})^{-};$ \\
$(1)$ $x\oplus y=(y^{-}\odot x^{-})^{\sim}=(y^{\sim}\odot x^{\sim})^{-}$. 
\end{lemma}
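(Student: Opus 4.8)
The plan is to verify each of the four equalities by a single direct substitution, using the two equivalent expressions for $\odot$ (the defining formula of Theorem \ref{inv-qb-90}, i.e. Proposition \ref{inv-qb-100}$(6)$: $a\odot b=(a\ra b^{-})^{\sim}=(b\rs a^{\sim})^{-}$) together with the two expressions for $\oplus$ furnished by $(S)$ ($a\oplus b=b^{\sim}\ra a=a^{-}\rs b$), and the weak involution identities $a^{-\sim}=a^{\sim-}=a$. These are De Morgan--type dualities swapping $\odot$ and $\oplus$, and the entire content is to select, in each case, the representation of the outer operation whose negation pattern collapses under weak involution.

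For the first equality of $(1)$ I would expand $\oplus$ by its left form, so that $y^{-}\oplus x^{-}=(x^{-})^{\sim}\ra y^{-}=x\ra y^{-}$, the collapse $(x^{-})^{\sim}=x^{-\sim}=x$ being precisely weak involution. Applying $^{\sim}$ and comparing with $x\odot y=(x\ra y^{-})^{\sim}$ yields $(y^{-}\oplus x^{-})^{\sim}=x\odot y$. For the second equality I would instead use the right form of $\oplus$, giving $y^{\sim}\oplus x^{\sim}=(y^{\sim})^{-}\rs x^{\sim}=y\rs x^{\sim}$, whence $(y^{\sim}\oplus x^{\sim})^{-}=(y\rs x^{\sim})^{-}=x\odot y$ by the second defining formula for $\odot$.

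The identities in $(2)$ are handled symmetrically. For $x\oplus y=(y^{-}\odot x^{-})^{\sim}$ I would expand $\odot$ in the form $a\odot b=(b\rs a^{\sim})^{-}$, so that $y^{-}\odot x^{-}=(x^{-}\rs (y^{-})^{\sim})^{-}=(x^{-}\rs y)^{-}$; then $(y^{-}\odot x^{-})^{\sim}=(x^{-}\rs y)^{-\sim}=x^{-}\rs y=x\oplus y$. For $x\oplus y=(y^{\sim}\odot x^{\sim})^{-}$ I would instead use $a\odot b=(a\ra b^{-})^{\sim}$, obtaining $y^{\sim}\odot x^{\sim}=(y^{\sim}\ra (x^{\sim})^{-})^{\sim}=(y^{\sim}\ra x)^{\sim}$, so that $(y^{\sim}\odot x^{\sim})^{-}=(y^{\sim}\ra x)^{\sim-}=y^{\sim}\ra x=x\oplus y$.

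There is no serious obstacle here; the computation is routine once both operations are available. The only point requiring care --- and the sole source of a possible slip --- is to choose, for each of the four equations, the representation of the outer operation producing a double negation of the form $a^{-\sim}$ or $a^{\sim-}$, which collapses to $a$, rather than $a^{--}$ or $a^{\sim\sim}$, which do not simplify in a merely weakly involutive algebra. With the matching forms selected, each identity reduces to one application of $(S)$ or of the defining formula for $\odot$.
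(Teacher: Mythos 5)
Your proposal is correct: all four equalities follow exactly as you compute, and your care in choosing the form of $(S)$ and of the defining formula for $\odot$ so that only collapsible double negations $a^{-\sim}$ or $a^{\sim-}$ appear is precisely the right point in the weakly involutive (non-cyclic) setting. The paper dismisses this lemma with ``The proof is straightforward,'' and your argument is exactly the routine substitution the author had in mind, so there is nothing to compare beyond noting that you have supplied the omitted details.
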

\begin{proof} The proof is straightforward. 
\end{proof}

\begin{example} \label{inv-qb-110} $\rm($\cite{Will1}$\rm)$
Let $X=\{0,a,b,c,u,1\}$ be a poset with $\le$ defined by $0\le a\le b\le u\le 1$ and $a\le c\le u$ 
(see the diagram below). 

\begin{center}
\begin{picture}(50,155)(0,0)

\put(37,40){\circle*{3}}
\put(35,28){$0$}
\put(37,40){\line(0,1){20}}

\put(37,61){\circle*{3}}
\put(42,58){$d=a$}
\put(37,61){\line(3,4){20}}
\put(57,87){\circle*{3}}
\put(61,85){$c$}

\put(37,61){\line(-3,4){20}}
\put(18,87){\circle*{3}}
\put(9,85){$b$}

\put(18,87){\line(3,4){20}}
\put(38,114){\circle*{3}}
\put(42,113){$u$}

\put(38,114){\line(3,-4){20}}

\put(38,114){\line(0,1){20}}
\put(38,135){\circle*{3}}
\put(35,140){$1$}

\end{picture}
\end{center}

Define the operations $\ra$ and $\rs$ on $X$ by the following tables: 
\[
\begin{array}{c|cccccc}
\ra & 0 & a & b & c & u & 1 \\ \hline
0 & 1 & 1 & 1 & 1 & 1 & 1 \\
a & c & u & u & 1 & 1 & 1 \\
b & c & c & u & c & u & 1 \\
c & 0 & b & b & 1 & 1 & 1 \\
u & 0 & a & b & c & u & 1 \\
1 & 0 & 0 & b & 0 & b & 1 
\end{array}
\hspace{10mm}
\begin{array}{c|cccccc}
\rs & 0 & a & b & c & u & 1 \\ \hline
0 & 1 & 1 & 1 & 1 & 1 & 1 \\
a & b & u & 1 & u & 1 & 1 \\
b & 0 & c & 1 & c & 1 & 1 \\
c & b & b & b & u & u & 1 \\
u & 0 & a & b & c & u & 1 \\
1 & 0 & 0 & 0 & c & c & 1 
\end{array}
.
\]

Then $(X,\le,\ra,\rs,d=a,u)$ is a bounded involutive unital quantum B-algebra, with $d=a$. 
Moreover, $X$ is a quantum B(pP)-algebra with the operation $\odot$ defined below.  

\[
\begin{array}{c|cccccc}
\odot & 0 & a & b & c & u & 1 \\ \hline
0 & 0 & 0 & 0 & 0 & 0 & 0 \\
a & 0 & 0 & 0 & a & a & b \\
b & 0 & a & b & a & b & b \\
c & 0 & 0 & 0 & c & c & 1 \\
u & 0 & a & b & c & u & 1 \\
1 & 0 & c & 1 & c & 1 & 1 
\end{array}
.
\]

\end{example}

$\vspace*{5mm}$
\section{Good maps on weakly involutive quantum B-algebras} 

The notions of good, synchronized and strong synchronized maps on a weakly involutive quantum B-algebra $X$ are defined and investigated. Given a pair $(\tau,\sigma)$ of synchronized maps, we prove that $\tau$ is an interior operator if and only if $\sigma$ is a closure operator on $X$. We also show that for any pair $(\tau,\sigma)$ of synchronized maps, the pair $(\tau\sigma,\sigma\tau)$ is also a synchronized map. 
Recall that a closure operator on a poset $(X,\le)$ is a map $\gamma$ that is, increasing, isotone and idempotent, i.e. 
$x\le \gamma(x)$, $x\le y$ implies $\gamma(x)\le \gamma(y)$ and $\gamma(\gamma(x))=\gamma(x)$, for all $x\in A$. 
Dually, an interior operator is a decreasing ($\gamma(x)\le x$), isotone and idempotent map on $(X,\le)$ 
(see \cite{Gal3}). 

\begin{definition} \label{gm-10} Let $(X,\le,\ra,\rs,d)$ be a weakly involutive quantum B-algebra. A map $\tau:X\longrightarrow X$ is said to be \emph{good} if $(\tau x^{-})^{\sim}=(\tau x^{\sim})^{-}$, for all $x\in X$. 
\end{definition} 

Denote by $\mathcal{GM}(X)$ the set of all good maps on $X$. We use the notation $\tau x$ instead of $\tau(x)$. 
If $\tau_1,\tau_2:X\longrightarrow X$, then the composition $\tau_1\circ \tau_2$ will be denoted by $\tau_1\tau_2$.    

\begin{remark} \label{gm-20} Let $(X,\le,\ra,\rs,d)$ be a weakly involutive quantum B-algebra. \\
$(1)$ $Id_X\in \mathcal{GM}(X)$. \\
$(2)$ If $X$ is cyclic, then any map on $X$ is good. \\
$(3)$ If $X$ is commutative, then any map on $X$ is good. 
\end{remark}

\begin{lemma} \label{gm-30} Let $(X,\le,\ra,\rs,d)$ be a weakly involutive quantum B-algebra and let 
$\tau\in \mathcal{GM}(X)$. Define $\sigma:X\longrightarrow X$, by 
$\sigma x=(\tau x^{-})^{\sim}=(\tau x^{\sim})^{-}$, for all $x\in X$. 
Then $\sigma\in \mathcal{GM}(X)$ and $\tau x=(\sigma x^{-})^{\sim}=(\sigma x^{\sim})^{-}$. 
\end{lemma}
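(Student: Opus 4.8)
The plan is to reduce both assertions to two auxiliary identities, namely $\sigma(x^{-})=(\tau x)^{-}$ and $\sigma(x^{\sim})=(\tau x)^{\sim}$, and then to finish by applying the involution relations $x^{-\sim}=x^{\sim-}=x$ available since $X$ is weakly involutive. First I would note that the definition of $\sigma$ is legitimate precisely because $\tau\in\mathcal{GM}(X)$: by Definition \ref{gm-10} the two expressions $(\tau x^{-})^{\sim}$ and $(\tau x^{\sim})^{-}$ coincide, so $\sigma$ is well defined, and—more usefully—I am free to use whichever of the two forms is convenient in any given substitution.

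Next I would compute $\sigma$ on the elements $x^{-}$ and $x^{\sim}$. Substituting $z:=x^{-}$ into the form $\sigma z=(\tau z^{\sim})^{-}$ and using $(x^{-})^{\sim}=x^{-\sim}=x$ yields $\sigma(x^{-})=(\tau x)^{-}$. Symmetrically, substituting $z:=x^{\sim}$ into the form $\sigma z=(\tau z^{-})^{\sim}$ and using $(x^{\sim})^{-}=x^{\sim-}=x$ yields $\sigma(x^{\sim})=(\tau x)^{\sim}$. The point of choosing the forms in this order is that in each case the inner pair of involution-like operations cancels against the one supplied by $x^{-}$ or $x^{\sim}$.

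Finally I would apply one more involution to each identity. From $\sigma(x^{-})=(\tau x)^{-}$ I obtain $(\sigma x^{-})^{\sim}=(\tau x)^{-\sim}=\tau x$, and from $\sigma(x^{\sim})=(\tau x)^{\sim}$ I obtain $(\sigma x^{\sim})^{-}=(\tau x)^{\sim-}=\tau x$. These two equalities deliver both conclusions at once: reading them as $(\sigma x^{-})^{\sim}=(\sigma x^{\sim})^{-}$ shows $\sigma\in\mathcal{GM}(X)$, while reading them as $\tau x=(\sigma x^{-})^{\sim}=(\sigma x^{\sim})^{-}$ is exactly the recovery formula for $\tau$.

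I do not anticipate a genuine obstacle, as the whole argument is symbolic manipulation with the two involution-like maps. The only point requiring care is the bookkeeping: at each substitution one must select the expression for $\sigma$ whose inner operation is the one that cancels, and it is precisely the goodness of $\tau$ that guarantees this freedom of choice, so the hypothesis $\tau\in\mathcal{GM}(X)$ is used in an essential way even though it never appears as a separate displayed step.
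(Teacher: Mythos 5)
Your proof is correct and follows essentially the same route as the paper: both arguments substitute $x^{-}$ and $x^{\sim}$ into the two defining expressions for $\sigma$, cancel via $x^{-\sim}=x^{\sim-}=x$, and read off $\tau x=(\sigma x^{-})^{\sim}=(\sigma x^{\sim})^{-}$, which simultaneously gives the recovery formula and the goodness of $\sigma$. Your explicit remark that goodness of $\tau$ is what makes $\sigma$ well defined is a nice touch the paper leaves implicit, but it does not change the argument.
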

\begin{proof}
Replacing $x$ with $x^{\sim}$ in $\sigma x=(\tau x^{-})^{\sim}$, we get $\sigma x^{\sim}=(\tau x)^{\sim}$, that 
is, $(\sigma x^{\sim})^{-}=\tau x$. Similarly, from $\sigma x=(\tau x^{\sim})^{-}$, replacing $x$ with $x^{-}$, we 
get $(\sigma x^{-})^{\sim}=\tau x$. It follows that $\tau x=(\sigma x^{-})^{\sim}=(\sigma x^{\sim})^{-}$, hence 
$\sigma$ is a good map on $X$. 
\end{proof}

\begin{definition} \label{gm-40} The good maps $\tau$ and $\sigma$ defined in Lemma \ref{gm-30} are called 
\emph{synchronized}. 
\end{definition}

Denote by $\mathcal{SM}(X)$ the set of all pairs of synchronized maps on $X$. 
Obviously, if $(\tau,\sigma)\in \mathcal{SM}(X)$, then $(\sigma,\tau)\in \mathcal{SM}(X)$. 

\begin{proposition} \label{gm-50} Let $(X,\le,\ra,\rs,d)$ be a weakly involutive quantum B-algebra and let $(\tau,\sigma) \in \mathcal{SM}(X)$. Then the following hold, for all $x\in X:$ \\
$(1)$ $(\tau x)^{-}=\sigma x^{-}$, $(\tau x)^{\sim}=\sigma x^{\sim};$ \\
$(2)$ $(\sigma x)^{-}=\tau x^{-}$, $(\sigma x)^{\sim}=\tau x^{\sim};$ \\ 
$(3)$ $\sigma\tau x=(\tau\sigma x^{-})^{\sim}=(\tau\sigma x^{\sim})^{-};$ \\
$(4)$ $\tau\sigma x=(\sigma\tau x^{-})^{\sim}=(\sigma\tau x^{\sim})^{-}$.  
\end{proposition}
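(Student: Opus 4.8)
The plan is to derive all four identities from the two defining equalities of a synchronized pair recorded in Lemma \ref{gm-30}, namely $\sigma x=(\tau x^{-})^{\sim}=(\tau x^{\sim})^{-}$ and $\tau x=(\sigma x^{-})^{\sim}=(\sigma x^{\sim})^{-}$, together with the weak involutivity identities $x^{-\sim}=x^{\sim-}=x$. The statement is perfectly symmetric under interchanging $\tau$ and $\sigma$, because $(\sigma,\tau)\in\mathcal{SM}(X)$ whenever $(\tau,\sigma)\in\mathcal{SM}(X)$; consequently parts $(2)$ and $(4)$ will follow verbatim from $(1)$ and $(3)$ with the roles of $\tau$ and $\sigma$ swapped, and I really only need to establish $(1)$ and $(3)$.

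For $(1)$, I would first substitute $x^{-}$ for $x$ in the second form $\sigma x=(\tau x^{\sim})^{-}$, obtaining $\sigma x^{-}=(\tau x^{-\sim})^{-}$, and then collapse $x^{-\sim}=x$ to get $(\tau x)^{-}=\sigma x^{-}$. Dually, substituting $x^{\sim}$ for $x$ in the first form $\sigma x=(\tau x^{-})^{\sim}$ and using $x^{\sim-}=x$ yields $(\tau x)^{\sim}=\sigma x^{\sim}$. Part $(2)$ is the identical computation with $\tau$ and $\sigma$ interchanged, starting from $\tau x=(\sigma x^{\sim})^{-}$ and $\tau x=(\sigma x^{-})^{\sim}$.

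Part $(3)$ then follows by composition. Substituting $\tau x$ for $x$ in $\sigma y=(\tau y^{-})^{\sim}$ gives $\sigma\tau x=(\tau(\tau x)^{-})^{\sim}$, and part $(1)$ rewrites $(\tau x)^{-}$ as $\sigma x^{-}$, producing $\sigma\tau x=(\tau\sigma x^{-})^{\sim}$; similarly, substituting $\tau x$ into $\sigma y=(\tau y^{\sim})^{-}$ and applying part $(1)$ to $(\tau x)^{\sim}=\sigma x^{\sim}$ gives $\sigma\tau x=(\tau\sigma x^{\sim})^{-}$. Part $(4)$ is the mirror image: substitute $\sigma x$ into the two forms of $\tau$ and invoke part $(2)$.

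There is no genuine obstacle here; the entire argument is bookkeeping over substitution and the involution identities. The only place to exercise care is matching, in each substitution, the correct form of the synchronization identity (the $(\cdot^{-})^{\sim}$ form versus the $(\cdot^{\sim})^{-}$ form) with the correct involution identity ($x^{-\sim}=x$ versus $x^{\sim-}=x$); mismatching these is the single way to manufacture a plausible-looking but false equality, so I would keep the two dual chains visually separated throughout the write-up.
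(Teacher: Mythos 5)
Your proposal is correct and follows essentially the same argument as the paper: substitution of $x^{-}$ and $x^{\sim}$ into the synchronization identities of Lemma \ref{gm-30}, collapsed by weak involutivity, yields $(1)$ and $(2)$, and the remaining parts follow by composition together with the symmetry $(\tau,\sigma)\in\mathcal{SM}(X)\Rightarrow(\sigma,\tau)\in\mathcal{SM}(X)$. The only (immaterial) difference is in part $(3)$, where you substitute $\tau x$ directly into the defining identity of $\sigma$ and invoke $(1)$, whereas the paper first derives $\tau\sigma x=(\sigma\tau x^{-})^{\sim}$ from $(2)$ and then converts it by a further substitution; your route is slightly more direct but the content is identical.
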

\begin{proof}
$(1)$ It follows replacing $x$ with $x^{-}$ and $x$ with $x^{\sim}$ in $\sigma x=(\tau x^{\sim})^{-}$ and 
$\sigma x=(\tau x^{-})^{\sim}$, respectively. \\
$(2)$ It follows replacing $x$ with $x^{-}$ and $x$ with $x^{\sim}$ in $\tau x=(\sigma x^{\sim})^{-}$ and 
$\tau x=(\sigma x^{-})^{\sim}$, respectively. \\ 
$(3)$ Using $(2)$, we get $\tau\sigma x=(\sigma(\sigma x)^{-})^{\sim}=(\sigma\tau x^{-})^{\sim}$, hence 
$(\tau\sigma x)^{-}=\sigma\tau x^{-}$. Replacing $x$ with $x^{\sim}$ we get $\sigma\tau x=(\tau\sigma x^{\sim})^{-}$. 
Similarly, $\tau\sigma x=(\sigma(\sigma x)^{\sim})^{-}=(\sigma\tau x^{\sim})^{-}$, so that 
$(\tau\sigma x)^{\sim}=\sigma\tau x^{\sim}$. Replacing $x$ with $x^{-}$, we have 
$\sigma\tau x=(\tau\sigma x^{-})^{\sim}$. 
It follows that $\sigma\tau x=(\tau\sigma x^{-})^{\sim}=(\tau\sigma x^{\sim})^{-}$. \\
$(4)$ Similarly to $(3)$, applying $(1)$. 
\end{proof}

\begin{corollary} \label{gm-50-10} Let $(X,\le,\ra,\rs,d)$ be a weakly involutive quantum B-algebra.  
If $(\tau,\sigma)\in \mathcal{SM}(X)$, then $(\tau\sigma,\sigma\tau)\in \mathcal{SM}(X)$. 
\end{corollary}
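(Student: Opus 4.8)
The plan is to verify directly that $(\tau\sigma,\sigma\tau)$ meets the definition of a synchronized pair, and to observe that essentially all the work has already been done in Proposition \ref{gm-50}(3). By Definition \ref{gm-40} together with Lemma \ref{gm-30}, a pair $(\alpha,\beta)$ belongs to $\mathcal{SM}(X)$ exactly when $\alpha\in\mathcal{GM}(X)$ and $\beta$ is the map associated to $\alpha$, that is, $\beta x=(\alpha x^{-})^{\sim}=(\alpha x^{\sim})^{-}$ for all $x\in X$. Setting $\alpha=\tau\sigma$ and $\beta=\sigma\tau$, I therefore need to check two things: first, that $\tau\sigma$ is a good map; and second, that $\sigma\tau x=((\tau\sigma)x^{-})^{\sim}=((\tau\sigma)x^{\sim})^{-}$ holds for every $x$.

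The second requirement is immediate, since it is precisely the assertion of Proposition \ref{gm-50}(3), namely $\sigma\tau x=(\tau\sigma x^{-})^{\sim}=(\tau\sigma x^{\sim})^{-}$. So the synchronization relation costs nothing new. For the first requirement, I would extract the goodness of $\tau\sigma$ from the very same identity: because Proposition \ref{gm-50}(3) gives both $((\tau\sigma)x^{-})^{\sim}=\sigma\tau x$ and $((\tau\sigma)x^{\sim})^{-}=\sigma\tau x$, the two left-hand expressions coincide, which is exactly the defining condition $((\tau\sigma)x^{-})^{\sim}=((\tau\sigma)x^{\sim})^{-}$ for $\tau\sigma\in\mathcal{GM}(X)$. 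With $\tau\sigma$ good and $\sigma\tau$ its associated map, Lemma \ref{gm-30} then yields automatically that $\sigma\tau$ is good as well and that the companion relation $\tau\sigma x=((\sigma\tau)x^{-})^{\sim}=((\sigma\tau)x^{\sim})^{-}$ holds, so $(\tau\sigma,\sigma\tau)\in\mathcal{SM}(X)$.

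There is no real obstacle here; the corollary is an immediate unpacking of Proposition \ref{gm-50}(3). The only point requiring care is to notice that the single displayed identity in part (3) does double duty: reading it as a definition of $\sigma\tau$ supplies the synchronization relation, while equating its two right-hand members supplies the goodness of $\tau\sigma$. One could equivalently invoke the symmetry $(\sigma,\tau)\in\mathcal{SM}(X)$ noted after Definition \ref{gm-40} and appeal to Proposition \ref{gm-50}(4) for the companion relation, but once Lemma \ref{gm-30} is in hand this is redundant.
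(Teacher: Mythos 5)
Your proof is correct and follows exactly the route the paper intends: the corollary is stated without proof precisely because it is an immediate consequence of Proposition \ref{gm-50}(3) (with (4) available for the companion relation), which is what you spell out. Your observation that the single identity in \ref{gm-50}(3) simultaneously yields the goodness of $\tau\sigma$ (by equating its two right-hand members) and the synchronization relation defining $\sigma\tau$ is exactly the right unpacking.
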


\begin{proposition} \label{gm-60} Let $(X,\le,\ra,\rs,d)$ be a weakly involutive quantum B-algebra and let $(\tau,\sigma) \in \mathcal{SM}(X)$. Then the following hold: \\
$(1)$ $\tau\sigma=\sigma$ iff $\sigma\tau=\tau;$ \\
$(2)$ $\tau\tau=\tau$ iff $\sigma\sigma=\sigma$. 
\end{proposition}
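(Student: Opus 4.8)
The plan is to exploit the defining synchronization relations $\sigma x=(\tau x^{-})^{\sim}=(\tau x^{\sim})^{-}$ and $\tau x=(\sigma x^{-})^{\sim}=(\sigma x^{\sim})^{-}$ from Lemma \ref{gm-30}, together with the weak involutivity identities $x^{-\sim}=x^{\sim-}=x$, to translate each hypothesis about one operator directly into the asserted conclusion about the other. Both parts are symmetric under interchanging $\tau$ and $\sigma$, since $(\sigma,\tau)\in\mathcal{SM}(X)$ whenever $(\tau,\sigma)\in\mathcal{SM}(X)$; hence in each part I only need to prove one implication and invoke this symmetry for the converse.

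For part $(1)$, I would start from the mixed-composition identities of Proposition \ref{gm-50}$(3)$,$(4)$, namely $\sigma\tau x=(\tau\sigma x^{-})^{\sim}$ and $\tau\sigma x=(\sigma\tau x^{-})^{\sim}$. Assuming $\tau\sigma=\sigma$, I substitute $\tau\sigma x^{-}=\sigma x^{-}$ into the first identity to obtain $\sigma\tau x=(\sigma x^{-})^{\sim}$, and then recognize the right-hand side as $\tau x$ via the synchronization relation $\tau x=(\sigma x^{-})^{\sim}$; this yields $\sigma\tau=\tau$. The reverse implication is the same computation with the roles of $\tau$ and $\sigma$ exchanged, using the second mixed identity together with $\sigma x=(\tau x^{-})^{\sim}$.

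For part $(2)$, the idea is to expand a double composition through the synchronization relation twice. Writing $\sigma\sigma x=\sigma\bigl((\tau x^{-})^{\sim}\bigr)=\bigl(\tau((\tau x^{-})^{\sim})^{-}\bigr)^{\sim}$ and simplifying $((\tau x^{-})^{\sim})^{-}=\tau x^{-}$ by weak involutivity, one arrives at $\sigma\sigma x=(\tau\tau x^{-})^{\sim}$. The hypothesis $\tau\tau=\tau$ then collapses this to $(\tau x^{-})^{\sim}=\sigma x$, giving $\sigma\sigma=\sigma$; the converse again follows by the $\tau\leftrightarrow\sigma$ symmetry.

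I do not expect a genuine obstacle here: the argument is a short sequence of substitutions, and the only point requiring care is applying the cancellation $z^{\sim-}=z$ (equivalently $z^{-\sim}=z$) at the right moment and tracking which of the two forms ($-$ or $\sim$) of each identity to invoke so that the involutions match up. The conceptual content is simply that synchronization conjugates $\tau$ and $\sigma$ through the involution, so idempotence and the fixed-point-type relation $\tau\sigma=\sigma$ transfer faithfully between the two operators.
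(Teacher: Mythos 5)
Your proposal is correct and follows essentially the same route as the paper: both arguments transfer the hypothesis between $\tau$ and $\sigma$ via the synchronization relations of Lemma \ref{gm-30} and Proposition \ref{gm-50} together with weak involutivity, and both settle the converses by the $(\tau,\sigma)\leftrightarrow(\sigma,\tau)$ symmetry of $\mathcal{SM}(X)$. The only cosmetic difference is that for part $(1)$ you invoke the mixed identity of Proposition \ref{gm-50}$(3)$ where the paper uses Proposition \ref{gm-50}$(1)$, and for part $(2)$ you first assemble the identity $\sigma\sigma x=(\tau\tau x^{-})^{\sim}$ before substituting, which is the same computation in a slightly different order.
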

\begin{proof}
$(1)$ If $\tau\sigma=\sigma$, then $\tau\sigma x^{-}=\sigma x^{-}$, for all $x\in X$. 
By Proposition \ref{gm-50}$(1)$, $\tau(\tau x)^{-}=(\tau x)^{-}$, so that $(\tau(\tau x)^{-})^{\sim}=\tau x$. 
Hence $\sigma\tau x=\tau x$, for all $x\in X$, that is, $\sigma\tau=\tau$. 
Similarly, $\sigma\tau=\tau$ implies $\tau\sigma=\sigma$. \\
$(2)$ Assume that $\tau\tau=\tau$, so that $\tau\tau x^{-}=\tau x^{-}$, for all $x\in X$. 
Applying Proposition \ref{gm-50}$(2)$ we get $\tau(\sigma x)^{-}=(\sigma x)^{-}$, thus 
$(\tau(\sigma x)^{-})^{\sim}=\sigma x$, for all $x\in X$, that is $\sigma\sigma x=\sigma x$, for all $x\in X$. 
It follows that $\sigma\sigma=\sigma$. Similarly, $\sigma\sigma=\sigma$ implies $\tau\tau=\tau$. 
\end{proof}

\begin{corollary} \label{gm-60-20} Let $(X,\le,\ra,\rs,d)$ be a weakly involutive quantum B-algebra and let   
$(\tau,\sigma)\in \mathcal{SM}(X)$. Then $\tau$ is idempotent if and only if $\sigma$ is idempotent.  
\end{corollary}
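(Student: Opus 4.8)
The plan is to recognize that this corollary carries no content beyond Proposition \ref{gm-60}$(2)$ and is simply its restatement in the language of idempotence. Recall that a self-map $\tau:X\longrightarrow X$ is called \emph{idempotent} precisely when $\tau\tau=\tau$, that is, when applying $\tau$ twice yields the same result as applying it once (here using the composition convention $\tau_1\tau_2=\tau_1\circ\tau_2$ fixed just after Definition \ref{gm-10}). With this definition in hand, the claim "$\tau$ is idempotent if and only if $\sigma$ is idempotent" is literally the assertion "$\tau\tau=\tau$ if and only if $\sigma\sigma=\sigma$".

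Accordingly, the first and only essential step is to invoke Proposition \ref{gm-60}$(2)$ directly. Since $(\tau,\sigma)\in\mathcal{SM}(X)$ by hypothesis, that proposition applies verbatim and gives the desired equivalence immediately. No further computation is needed, because the underlying algebraic work — translating the idempotence equation for $\tau$ into the idempotence equation for $\sigma$ via Proposition \ref{gm-50}$(2)$ and the good-map relation $\sigma x=(\tau x^{\sim})^{-}$ — has already been carried out in the proof of Proposition \ref{gm-60}$(2)$.

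There is thus no genuine obstacle here. The only point worth confirming is that the notion of idempotence intended in the statement coincides with the equation $\tau\tau=\tau$ rather than some order-theoretic variant; this is exactly the standard meaning, and it matches the composition convention adopted earlier. Hence the corollary follows at once from Proposition \ref{gm-60}$(2)$.
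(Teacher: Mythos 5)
Your proof is correct and matches the paper's intent exactly: the corollary is an immediate restatement of Proposition \ref{gm-60}$(2)$, which asserts $\tau\tau=\tau$ if and only if $\sigma\sigma=\sigma$, and this is precisely the definition of idempotence under the paper's composition convention. No further argument is needed.
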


\begin{definition} \label{gm-60-30} Let $(X,\le,\ra,\rs,d)$ be a weakly involutive quantum B-algebra and let 
$(\tau,\sigma)\in \mathcal{SM}(X)$ satisfying one of the equivalent conditions from Proposition \ref{gm-60}$(1)$. 
Then the maps $\tau$ and $\sigma$ are called \emph{strong synchronized}. 
\end{definition}

Denote by $\mathcal{SSM}(X)$ the set of all pairs of strong synchronized maps on $X$. 

\begin{proposition} \label{gm-80} Let $(X,\le,\ra,\rs,d)$ be a weakly involutive quantum B-algebra and let $(\tau,\sigma) \in \mathcal{SM}(X)$. Then the following hold: \\
$(1)$ $\tau$ is decreasing if and only if $\sigma$ is increasing; \\
$(2)$ $\tau$ is isotone (antitone) if and only if $\sigma$ is isotone (antitone). 
\end{proposition}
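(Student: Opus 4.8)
The plan is to transport each order-theoretic property of $\tau$ across the synchronization formula $\sigma x=(\tau x^{-})^{\sim}$, exploiting the fact that the two involution operations $x\mapsto x^{-}$ and $x\mapsto x^{\sim}$ are mutually inverse, order-reversing bijections of $(X,\le)$. The order-reversing (and order-reflecting) property is precisely Proposition \ref{inv-qb-40}$(1)$, while the identities $x^{-\sim}=x^{\sim-}=x$ come from weak involutivity (Proposition \ref{inv-qb-30}$(1)$--$(2)$). The converse implications in both parts are then obtained for free: since $(\tau,\sigma)\in\mathcal{SM}(X)$ entails $(\sigma,\tau)\in\mathcal{SM}(X)$, every argument applies verbatim after swapping the roles of $\tau$ and $\sigma$.

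For part $(1)$, I would first assume $\tau$ is decreasing, so that $\tau x^{-}\le x^{-}$ for every $x\in X$. Applying the antitone map $(\cdot)^{\sim}$ reverses this to $x=x^{-\sim}\le(\tau x^{-})^{\sim}=\sigma x$, which says exactly that $\sigma$ is increasing. For the reverse direction, assume $\sigma$ is increasing, so $x^{-}\le\sigma x^{-}$; using the companion synchronization formula $\tau x=(\sigma x^{-})^{\sim}$ and again applying $(\cdot)^{\sim}$ gives $\tau x=(\sigma x^{-})^{\sim}\le(x^{-})^{\sim}=x$, so $\tau$ is decreasing.

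For part $(2)$, the same mechanism handles both monotonicity types. Suppose $\tau$ is isotone and $x\le y$; then $y^{-}\le x^{-}$, hence $\tau y^{-}\le\tau x^{-}$, and applying the antitone $(\cdot)^{\sim}$ yields $\sigma x=(\tau x^{-})^{\sim}\le(\tau y^{-})^{\sim}=\sigma y$, so $\sigma$ is isotone. If instead $\tau$ is antitone, then $y^{-}\le x^{-}$ gives $\tau x^{-}\le\tau y^{-}$, and $(\cdot)^{\sim}$ reverses this to $\sigma y\le\sigma x$, so $\sigma$ is antitone. The remaining implications follow from the symmetry of $\mathcal{SM}(X)$ noted above.

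There is no serious obstacle here: the proof is a routine composition of $\tau$ with the antitone involution. The only point to watch is the bookkeeping of how many order-reversals occur. In part $(1)$ the single involution applied to a \emph{decreasing} condition flips it to an \emph{increasing} one; in part $(2)$ the two order-reversals—one from passing to $x^{-},y^{-}$ and one from the final $(\cdot)^{\sim}$—cancel, so isotonicity is preserved (and antitonicity likewise) rather than reversed. This parity is the one thing that could be miscounted.
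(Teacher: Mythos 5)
Your proof is correct and follows essentially the same route as the paper: both arguments transport the hypothesis through the synchronization identities $\sigma x=(\tau x^{-})^{\sim}$ and $\tau x=(\sigma x^{-})^{\sim}$, using that $(\cdot)^{-}$ and $(\cdot)^{\sim}$ are order-reversing with $x^{-\sim}=x^{\sim-}=x$. The only cosmetic difference is that you spell out the antitone case and the symmetry argument via $(\sigma,\tau)\in\mathcal{SM}(X)$, which the paper leaves implicit with ``similarly.''
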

\begin{proof}
$(1)$ Assume that $\tau$ is decreasing. Hence, for any $x\in X$, we have $\tau x^{-}\le x^{-}$, so that 
$x\le (\tau x^{-})^{\sim}$. It follows that $x\le \sigma x$, that is $\sigma$ is increasing. 
Conversely, if $\sigma$ is increasing, then $x^{-}\le \sigma x^{-}$, for all $x\in X$. 
We get $(\sigma x^{-})^{\sim}\le x^{-\sim}=x$, that is, $\tau x\le x$. Thus, $\tau$ is decreasing. \\
$(2)$ Suppose that $\tau$ is isotone and let $x,y\in X$ such that $x\le y$, that is $y^{-}\le x^{-}$. 
Since $\tau$ is isotone, we get $\tau y^{-}\le \tau x^{-}$, so that $(\tau x^{-})^{\sim}\le (\tau y^{-})^{\sim}$, 
that is $\sigma x\le \sigma y$. We conclude that $\sigma$ is isotone. Similarly for the antitone case. 
\end{proof}

\begin{theorem} \label{gm-90} Let $(X,\le,\ra,\rs,d)$ be a weakly involutive quantum B-algebra and let   
$(\tau,\sigma)\in \mathcal{SM}(X)$. Then $\tau$ is an interior operator if and only if $\sigma$ is a closure 
operator on $X$.  
\end{theorem}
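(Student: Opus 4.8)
The plan is to decompose the definition of an interior operator into its three constituent clauses and match each against an equivalence already established in the excerpt. Recall that $\tau$ is an \emph{interior operator} precisely when it is decreasing, isotone, and idempotent, whereas $\sigma$ is a \emph{closure operator} precisely when it is increasing, isotone, and idempotent. Since $(\tau,\sigma)\in\mathcal{SM}(X)$, each of these three clauses has a synchronized counterpart that has been proved earlier, so the theorem should follow by simply conjoining three biconditionals.

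Concretely, I would first invoke Proposition \ref{gm-80}$(1)$ to transfer the directional condition: $\tau$ is decreasing if and only if $\sigma$ is increasing. Next, Proposition \ref{gm-80}$(2)$ handles monotonicity, giving that $\tau$ is isotone if and only if $\sigma$ is isotone. Finally, Corollary \ref{gm-60-20} (itself extracted from Proposition \ref{gm-60}$(2)$) supplies the idempotency equivalence: $\tau$ is idempotent if and only if $\sigma$ is idempotent. Combining these three equivalences yields that $\tau$ is at once decreasing, isotone, and idempotent exactly when $\sigma$ is at once increasing, isotone, and idempotent, which is the desired equivalence between $\tau$ being an interior operator and $\sigma$ being a closure operator.

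Because all three ingredients are already in hand, there is no genuine obstacle remaining at this step; the entire difficulty has been front-loaded into the synchronization machinery of Proposition \ref{gm-80} and Corollary \ref{gm-60-20}. The only point meriting a sentence of care is to verify that the definitions line up correctly, namely that the required pairing is decreasing with increasing, isotone with isotone, and idempotent with idempotent, matching the definitions of interior and closure operator recalled at the start of the section. I would therefore state the three cited equivalences in sequence and conclude immediately, keeping the proof to a few lines.
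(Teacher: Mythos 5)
Your proposal is correct and is exactly the paper's proof: the paper disposes of Theorem \ref{gm-90} by citing Proposition \ref{gm-80} (for the decreasing/increasing and isotone equivalences) together with Corollary \ref{gm-60-20} (for idempotence), which is precisely the three-part decomposition you describe.
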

\begin{proof} 
It follows by Proposition \ref{gm-80} and Corollary \ref{gm-60-20}. 
\end{proof}

\begin{proposition} \label{gm-100} Let $(X,\le,\ra,\rs,d)$ be a weakly involutive quantum B-algebra and let $(\tau_1,\sigma_1), (\tau_2,\sigma_2) \in \mathcal{SM}(X)$. Then the following hold: \\
$(1)$ $\tau_1\le \tau_2$ iff $\sigma_1\ge \sigma_2;$ \\
$(2)$ $\sigma_1\le \sigma_2$ iff $\tau_1\ge \tau_2$. 
\end{proposition}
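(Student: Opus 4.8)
The plan is to exploit the fact that the unary operations $(\cdot)^{-}$ and $(\cdot)^{\sim}$ are antitone, together with the defining relations for synchronized pairs supplied by Lemma \ref{gm-30}. Recall that by Proposition \ref{inv-qb-40}$(1)$ (equivalently Proposition \ref{inv-qb-30}$(3)$), for any $a,b\in X$ we have $a\le b$ iff $b^{\sim}\le a^{\sim}$; and that for a synchronized pair $(\tau,\sigma)$, Lemma \ref{gm-30} gives $\sigma x=(\tau x^{\sim})^{-}$ and $\tau x=(\sigma x^{\sim})^{-}$ for all $x\in X$. Throughout, the order $\tau_1\le\tau_2$ is understood pointwise, i.e.\ $\tau_1 x\le \tau_2 x$ for every $x\in X$.

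For $(1)$, first I would assume $\tau_1\le\tau_2$. Since this holds at every element of $X$, it holds in particular at $x^{\sim}$, so $\tau_1 x^{\sim}\le \tau_2 x^{\sim}$ for all $x\in X$. Applying the antitone operation $(\cdot)^{-}$ reverses the inequality, giving $(\tau_2 x^{\sim})^{-}\le (\tau_1 x^{\sim})^{-}$, which by Lemma \ref{gm-30} is exactly $\sigma_2 x\le \sigma_1 x$; hence $\sigma_1\ge\sigma_2$. The converse is symmetric: assuming $\sigma_1\ge\sigma_2$, I would evaluate at $x^{\sim}$ and apply $(\cdot)^{-}$ to obtain $(\sigma_1 x^{\sim})^{-}\le(\sigma_2 x^{\sim})^{-}$, i.e.\ $\tau_1 x\le\tau_2 x$, now using the relation $\tau x=(\sigma x^{\sim})^{-}$ of Lemma \ref{gm-30}. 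This establishes $\tau_1\le\tau_2$ iff $\sigma_1\ge\sigma_2$.

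For $(2)$, I would simply invoke the symmetry of the synchronized relation. Since $(\tau_i,\sigma_i)\in\mathcal{SM}(X)$ implies $(\sigma_i,\tau_i)\in\mathcal{SM}(X)$ (as noted after Definition \ref{gm-40}), part $(1)$ applied to the pairs $(\sigma_1,\tau_1)$ and $(\sigma_2,\tau_2)$ yields $\sigma_1\le\sigma_2$ iff $\tau_1\ge\tau_2$, which is precisely the claim.

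There is no serious obstacle here; the whole argument is a direct application of antitonicity once the relations of Lemma \ref{gm-30} are in hand. The only point requiring a little care is the choice of which of the two equivalent expressions for $\sigma x$ (resp.\ $\tau x$) to use: one should feed $x^{\sim}$ into the $(\cdot)^{-}$-expression so that the antitone operation produces the other map directly, rather than evaluating at $x^{-}$ and carrying along an extra $(\cdot)^{\sim}$. Either bookkeeping works, since the two expressions agree by weak involutivity, but the indicated choice keeps the computation to a single order-reversal.
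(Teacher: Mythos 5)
Your proposal is correct. The paper itself gives no argument for this proposition (its proof reads only ``The proof is straightforward''), and what you have written---evaluating the pointwise inequality at $x^{\sim}$, applying the antitone negation $(\cdot)^{-}$ (Proposition \ref{inv-qb-30}$(3)$), identifying the result via the synchronization relations of Lemma \ref{gm-30}, and then deducing part $(2)$ from part $(1)$ by the symmetry $(\tau,\sigma)\in\mathcal{SM}(X)\Rightarrow(\sigma,\tau)\in\mathcal{SM}(X)$---is precisely the routine argument the paper intends.
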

\begin{proof} The proof is straightforward. 
\end{proof}

\begin{proposition} \label{gm-110} Let $(X,\le,\ra,\rs,d,u)$ be a weakly involutive unital quantum B-algebra 
and let $(\tau,\sigma) \in \mathcal{SM}(X)$. Then the following hold: \\
$(1)$ $\tau d=d$ iff $\sigma u=u;$ \\
$(2)$ $\sigma d=d$ iff $\tau u=u$. 
\end{proposition}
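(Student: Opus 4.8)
The plan is to reduce each equivalence to one of the synchronization relations of Proposition \ref{gm-50}, evaluated at the distinguished element $d$, and to exploit the identities $u^{-}=u^{\sim}=d$ and $d^{-}=d^{\sim}=u$ from Remark \ref{inv-qb-20-10} together with the bijectivity of the unary operations $(-)^{-}$ and $(-)^{\sim}$ guaranteed by weak involution. Recall that weak involution gives $z^{-\sim}=z^{\sim-}=z$, so $(-)^{\sim}$ is a two-sided inverse of $(-)^{-}$; in particular $(-)^{-}$ is injective, and an equality of the form $a=b$ can be detected after applying $(-)^{-}$.

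For $(1)$, I would take the identity $(\tau x)^{-}=\sigma x^{-}$ from Proposition \ref{gm-50}$(1)$ and specialize it to $x:=d$. Since $d^{-}=u$, this gives $(\tau d)^{-}=\sigma u$. By injectivity of $(-)^{-}$, the equality $\tau d=d$ is equivalent to $(\tau d)^{-}=d^{-}$, that is, to $\sigma u=u$, again using $d^{-}=u$. This settles $(1)$. (Equivalently, one could start from the defining identity $\sigma x=(\tau x^{-})^{\sim}$ of Lemma \ref{gm-30} at $x:=u$, obtaining $\sigma u=(\tau d)^{\sim}$ from $u^{-}=d$, and then cancel via $(-)^{-}$ using $d^{\sim}=u$; both routes are interchangeable.)

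For $(2)$, I would argue dually from $(\sigma x)^{-}=\tau x^{-}$ in Proposition \ref{gm-50}$(2)$. Setting $x:=d$ and using $d^{-}=u$ yields $(\sigma d)^{-}=\tau u$, and the same injectivity argument shows that $\sigma d=d$ iff $(\sigma d)^{-}=u$ iff $\tau u=u$.

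There is no substantive obstacle here; once the two ingredients are in place the proof is purely formal. The only care needed is bookkeeping: one must apply the correct one of the two mutually inverse operations $(-)^{-}$ and $(-)^{\sim}$ so that the cancellation is legitimate, and one must invoke unitality precisely to have $d^{-}=u$ and $d^{\sim}=u$ available. This is exactly the point at which the hypothesis that $X$ is unital (and hence that $u$ and $d$ are interchanged by the involutions) enters the argument.
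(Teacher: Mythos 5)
Your proof is correct and takes essentially the same route as the paper's: both arguments specialize the synchronization identities at the distinguished elements and use $u^{-}=d$, $d^{-}=u$ from Remark \ref{inv-qb-20-10}. The only cosmetic difference is that you pass through Proposition \ref{gm-50} and cancel via injectivity of $(-)^{-}$, whereas the paper computes both implications directly from the defining relations $\sigma x=(\tau x^{-})^{\sim}$ and $\tau x=(\sigma x^{-})^{\sim}$ of Lemma \ref{gm-30}.
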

\begin{proof}
$(1)$ Assume that $\tau d=d$. Using Remark \ref{inv-qb-20-10}, we have 
$\sigma u=(\tau u^{-})^{\sim}=(\tau(u\ra d))^{\sim}=(\tau d)^{\sim}=d^{\sim}=d\rs d=u$. 
Conversely, if $\sigma u=u$, then 
$\tau d=(\sigma d^{-})^{\sim}=(\sigma (d\ra d))^{\sim}=(\sigma u)^{\sim}=u^{\sim}=u\rs d=d$. \\
$(3)$ Similarly to $(2)$. 
\end{proof}

$\vspace*{5mm}$
\section{Quantifiers on weakly involutive quantum B-algebras}

We define the existential and universal quantifiers on weakly involutive quantum B-algebras, and we extend to 
the case of weakly involutive quantum B-algebras some results proved in \cite{Ior16} for involutive 
pseudo BCK-algebras. It is proved that there is a one-to-one correspondence between the quantifiers on weakly 
involutive quantum B-algebras. One of the main results consists of proving that any pair of quantifiers is a 
strict monadic operator on weakly involutive quantum B-algebras. 
In this section, $(X,\le,\ra,\rs,d,u)$ will be a weakly involutive unital quantum B-algebra, unless otherwise stated.

\begin{definition} \label{qinv-10} A map $\exists\in \mathcal{GM}(X)$ is called an \emph{existential quantifier} 
on $X$ if it satisfies the following axioms, for all $x,y\in X:$ \\
$(E_1)$ $\exists d=d;$ \\
$(E_2)$ $\exists u=u;$ \\
$(E_3)$ $x\le \exists x;$ \\
$(E_4)$ $\exists(x\oplus \exists y)=\exists(\exists x\oplus y)=\exists x\oplus \exists y;$ \\ 
$(E_5)$ $\exists(x\oplus x)=\exists x\oplus \exists x;$ \\  
$(E_6)$ $\exists(x\odot \exists y)=\exists(\exists x\odot y)=\exists x\odot \exists y;$ \\ 
$(E_7)$ $\exists(x\odot x)=\exists x\odot \exists x$. 
\end{definition}

\begin{definition} \label{qinv-10-10} An existential quantifier $\exists$ on $X$ is said to be \emph{weak} if 
it satisfies the axioms $(E_1)-(E_6)$ from Definition \ref{qinv-10}.
\end{definition}

\begin{definition} \label{qinv-20} A map $\forall\in \mathcal{GM}(X)$ is called a \emph{universal quantifier} 
on $X$ if it satisfies the following axioms, for all $x,y\in X:$ \\
$(U_1)$ $\forall u=u;$ \\
$(U_2)$ $\forall d=d;$ \\
$(U_3)$ $\forall x\le x;$ \\
$(U_4)$ $\forall(x\odot \forall y)=\forall(\forall x\odot y)=\forall x\odot \forall y;$ \\ 
$(U_5)$ $\forall(x\odot x)=\forall x\odot \forall x;$ \\ 
$(U_6)$ $\forall(x\oplus \forall y)=\forall(\forall x\oplus y)=\forall x\oplus \forall y;$\\ 
$(U_7)$ $\forall(x\oplus x)=\forall x\oplus \forall x$.   
\end{definition} 

\begin{definition} \label{qinv-20-10} A universal quantifier $\forall$ on $X$ is said to be \emph{weak} if 
it satisfies the axioms $(U_1)-(U_6)$ from Definition \ref{qinv-20}.
\end{definition}

\begin{theorem} \label{qinv-30} The following hold:\\
$(1)$ if $(\exists,\forall)\in \mathcal{SM}(X)$ such that $\exists$ is an existential quantifier on $X$, 
then $\forall$ is a universal quantifier on $X;$ \\
$(2)$ if $(\exists,\forall)\in \mathcal{SM}(X)$ such that $\forall$ is a universal quantifier on $X$, 
then $\exists$ is an existential quantifier on $X$.  
\end{theorem}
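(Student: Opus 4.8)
The plan is to prove both implications by exploiting the symmetry of the two axiom systems under a De Morgan--type correspondence that interchanges $\odot$ and $\oplus$. Throughout I write $\exists=\tau$ and $\forall=\sigma$, so that $(\tau,\sigma)\in\mathcal{SM}(X)$. First I would assemble the tools that make the reduction mechanical. Since the pair is synchronized, $\forall\in\mathcal{GM}(X)$ by Lemma \ref{gm-30}, so the membership requirement in Definition \ref{qinv-20} is automatic; moreover Proposition \ref{gm-50}$(1)$,$(2)$ supplies the exchange rules $(\exists x)^{-}=\forall x^{-}$, $(\exists x)^{\sim}=\forall x^{\sim}$, $(\forall x)^{-}=\exists x^{-}$, $(\forall x)^{\sim}=\exists x^{\sim}$. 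From Lemma \ref{inv-qb-100-10} together with $x^{-\sim}=x^{\sim-}=x$ I would extract the four identities $(x\odot y)^{-}=y^{-}\oplus x^{-}$, $(x\odot y)^{\sim}=y^{\sim}\oplus x^{\sim}$, $(x\oplus y)^{-}=y^{-}\odot x^{-}$, $(x\oplus y)^{\sim}=y^{\sim}\odot x^{\sim}$. Finally I would record that, by weak involutivity, $-$ and $\sim$ are mutually inverse bijections of $X$, so any identity $A=B$ may be verified by checking $A^{\sim}=B^{\sim}$ or $A^{-}=B^{-}$.

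With these in hand, the boundary and order axioms are immediate. For $(1)$, axioms $(E_1)$ and $(E_2)$ read $\exists d=d$ and $\exists u=u$; Proposition \ref{gm-110} then yields $\sigma u=u$ and $\sigma d=d$, which are $(U_1)$ and $(U_2)$. Axiom $(E_3)$ says $\tau$ is increasing, and applying Proposition \ref{gm-80}$(1)$ to the synchronized pair $(\sigma,\tau)$ shows $\sigma$ is decreasing, that is $(U_3)$.

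The substantive content is the transfer of the compatibility axioms, and here the numbering is arranged so that each universal axiom matches the existential axiom of the same index with $\odot$ and $\oplus$ interchanged. To prove the $\odot$-axioms $(U_4)$ and $(U_5)$ I would apply $\sim$ to both sides: the identity $(x\odot y)^{\sim}=y^{\sim}\oplus x^{\sim}$ together with $(\sigma w)^{\sim}=\tau w^{\sim}$ converts each claim into an $\oplus$-statement about $\tau$, which is precisely $(E_4)$, respectively $(E_5)$, at the substituted arguments $x^{\sim},y^{\sim}$; inverting with $-$ then recovers the desired equality. Symmetrically, the $\oplus$-axioms $(U_6)$ and $(U_7)$ are obtained by applying $-$, using $(x\oplus y)^{-}=y^{-}\odot x^{-}$ and $(\sigma w)^{-}=\tau w^{-}$ to reduce them to $(E_6)$, respectively $(E_7)$, for $\tau$ at $x^{-},y^{-}$. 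The only delicate points are the order reversal produced by the De Morgan laws and the correct choice between $-$ and $\sim$, so that the $\tau$-axiom one lands on carries its inner $\tau$ on the matching side; this bookkeeping is the main, though ultimately routine, obstacle.

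Statement $(2)$ is entirely dual: since $(\sigma,\tau)$ is also synchronized and both the exchange rules and the De Morgan identities are symmetric in $\exists,\forall$ and in $\odot,\oplus$, the same computations run in reverse, deriving $(E_1)$--$(E_3)$ from $(U_1)$--$(U_3)$ via Propositions \ref{gm-110} and \ref{gm-80}, and $(E_4)$--$(E_7)$ from $(U_4)$--$(U_7)$ by applying $-$ or $\sim$ and invoking the De Morgan identities. Hence each implication follows.
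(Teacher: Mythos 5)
Your proposal is correct and follows essentially the same route as the paper: the boundary and order axioms $(U_1)$--$(U_3)$ are transferred via Propositions \ref{gm-110} and \ref{gm-80}$(1)$, and each compatibility axiom $(U_i)$ is reduced to the matching $(E_i)$ (and conversely) by combining the De Morgan identities of Lemma \ref{inv-qb-100-10} with the exchange rules of Proposition \ref{gm-50}. Your phrasing "apply $\sim$ (or $-$) to both sides and invert" is just a repackaging of the paper's direct rewriting of one side, so there is no substantive difference.
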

\begin{proof}
Let $(\exists,\forall)\in \mathcal{SM}(X)$. We prove that $\exists$ is an existential quantifier if and only if 
$\forall$ is a universal quantifier on $X$, that is, the axioms $(E_1)-(E_7)$ are equivalent to axioms $(U_1)-(U_7)$. \\
$(E_1)\Leftrightarrow (U_1)$, $(E_2)\Leftrightarrow (U_2)$, $(E_3)\Leftrightarrow (U_3)$ follow from Propositions \ref{gm-110} and \ref{gm-80}$(1)$. \\
$(E_4)\Leftrightarrow (U_4)$ Suppose that $(E_4)$ holds and we have: \\
$\hspace*{2.00cm}$ $\forall(\forall x\odot y)=\forall(y^{-}\oplus (\forall x)^{-})^{\sim}$ (Lemma \ref{inv-qb-100-10})\\
$\hspace*{3.75cm}$ $=(\exists(y^{-}\oplus \exists x^{-}))^{\sim}$ (Prop. \ref{gm-50}) \\
$\hspace*{3.75cm}$ $=(\exists y^{-}\oplus \exists x^{-})^{\sim}$ (by $(E_4)$) \\
$\hspace*{3.75cm}$ $=((\forall y)^{-}\oplus (\forall x)^{-})^{\sim}$ (Prop. \ref{gm-50}) \\
$\hspace*{3.75cm}$ $=\forall x\odot \forall y$ (Lemma \ref{inv-qb-100-10}). \\
$\hspace*{2.00cm}$ $\forall(x\odot \forall y)=\forall((\forall y)^{-}\oplus x^{-})^{\sim}$ (Lemma \ref{inv-qb-100-10})\\
$\hspace*{3.75cm}$ $=(\exists(\exists y^{-}\oplus x^{-}))^{\sim}$ (Prop. \ref{gm-50}) \\
$\hspace*{3.75cm}$ $=(\exists y^{-}\oplus \exists x^{-})^{\sim}$ (by $(E_4)$) \\
$\hspace*{3.75cm}$ $=((\forall y)^{-}\oplus (\forall x)^{-})^{\sim}$ (Prop. \ref{gm-50}) \\
$\hspace*{3.75cm}$ $=\forall x\odot \forall y$ (Lemma \ref{inv-qb-100-10}). \\
Hence, $(E_4)$ implies $(U_4)$. Conversely, assume that $(U_4)$ holds, so we get: \\
$\hspace*{2.00cm}$ $\exists(\exists x\oplus y)=\exists(y^{-}\odot (\exists x)^{-})^{\sim}$ (Lemma \ref{inv-qb-100-10})\\
$\hspace*{3.75cm}$ $=(\forall(y^{-}\odot \forall x^{-}))^{\sim}$ (Prop. \ref{gm-50}) \\
$\hspace*{3.75cm}$ $=(\forall y^{-}\odot \forall x^{-})^{\sim}$ (by $(U_4)$) \\
$\hspace*{3.75cm}$ $=((\exists y)^{-}\odot (\exists x)^{-})^{\sim}$ (Prop. \ref{gm-50}) \\
$\hspace*{3.75cm}$ $=\exists x\oplus \exists y$ (Lemma \ref{inv-qb-100-10}). \\
$\hspace*{2.00cm}$ $\exists(x\oplus \exists y)=\exists((\exists y)^{-}\odot x^{-})^{\sim}$ (Lemma \ref{inv-qb-100-10})\\
$\hspace*{3.75cm}$ $=(\forall(\forall y^{-}\odot x^{-}))^{\sim}$ (Prop. \ref{gm-50}) \\
$\hspace*{3.75cm}$ $=(\forall y^{-}\odot \forall x^{-})^{\sim}$ (by $(U_4)$) \\
$\hspace*{3.75cm}$ $=((\exists y)^{-}\odot (\exists x)^{-})^{\sim}$ (Prop. \ref{gm-50}) \\
$\hspace*{3.75cm}$ $=\exists x\oplus \exists y$ (Lemma \ref{inv-qb-100-10}). \\
It follows that $(U_4)$ implies $(E_4)$, thus $(E_4)$ is equivalent to $(U_4)$. \\ 
$(E_5)\Leftrightarrow (U_5)$ Using $\exists(x\oplus x)=\exists x\oplus \exists x$, and Lemma \ref{inv-qb-100-10}, and Proposition \ref{gm-50}, we have: \\
$\hspace*{2.00cm}$ $\forall(x\odot x)=\forall(x^{-}\oplus x^{-})^{\sim}=(\exists(x^{-}\oplus x^{-}))^{\sim}=
                   (\exists x^{-}\oplus \exists x^{-})^{\sim}$ \\
$\hspace*{3.50cm}$ $=((\forall x)^{-}\oplus (\forall x)^{-})^{\sim}=\forall x\odot \forall x$. \\
Conversely, assuming that $(U_5)$ holds, we get: \\
$\hspace*{2.00cm}$ $\exists(x\oplus x)=\exists(x^{-}\odot x^{-})^{\sim}=(\forall(x^{-}\odot x^{-}))^{\sim}=
                   (\forall x^{-}\odot \forall x^{-})^{\sim}$ \\
$\hspace*{3.50cm}$ $=((\exists x)^{-}\odot (\exists x)^{-})^{\sim}=\exists x\oplus \exists x$. \\
Hence, $(E_5)$ is equivalent to $(U_5)$. \\
$(E_6)\Leftrightarrow (U_6)$ If $(E_6)$ holds, applying Lemma \ref{inv-qb-100-10} and Propositions \ref{gm-50}, 
we have: \\
$\hspace*{2.00cm}$ $\forall(x\oplus \forall y)=\forall((\forall y)^{-}\odot x^{-})^{\sim}=
                    (\exists(\exists y^{-}\odot x^{-}))^{\sim}=(\exists y^{-}\odot \exists x^{-})^{\sim}$ \\
$\hspace*{3.75cm}$ $=((\forall y)^{-}\odot (\forall x)^{-})^{\sim}=\forall x\oplus \forall y$. \\
$\hspace*{2.00cm}$ $\forall(\forall x\oplus y)=\forall(y^{-}\odot (\forall x)^{-})^{\sim}=
                    (\exists(y^{-}\odot \exists x^{-}))^{\sim}=(\exists y^{-}\odot \exists x^{-})^{\sim}$ \\
$\hspace*{3.75cm}$ $=((\forall y)^{-}\odot (\forall x)^{-})^{\sim}=\forall x\oplus \forall y$. \\
Conversely, assume that $(U_5)$ holds, so that: \\ 
$\hspace*{2.00cm}$ $\exists(x\odot \exists y)=\exists((\exists y)^{-}\oplus x^{-})^{\sim}=
                    (\forall(\forall y^{-}\oplus x^{-}))^{\sim}=(\forall y^{-}\oplus \forall x^{-})^{\sim}$ \\
$\hspace*{3.75cm}$ $=((\exists y)^{-}\oplus (\exists x)^{-})^{\sim}=\exists x\odot \exists y$. \\
$\hspace*{2.00cm}$ $\exists(\exists x\odot y)=\exists(y^{-}\oplus (\exists x)^{-})^{\sim}=
                    (\forall(y^{-}\oplus \forall x^{-}))^{\sim}=(\forall y^{-}\oplus \forall x^{-})^{\sim}$ \\
$\hspace*{3.75cm}$ $=((\exists y)^{-}\oplus (\exists x)^{-})^{\sim}=\exists x\odot \exists y$. \\
If follows that $(E_6)$ is equivalent to $(U_6)$. \\
$(E_7)\Leftrightarrow (U_7)$ Suppose that $(E_7)$ holds, and we have: \\
$\hspace*{2.00cm}$ $\forall(x\oplus x)=\forall(x^{-}\odot x^{-})^{\sim}=(\exists(x^{-}\odot x^{-}))^{\sim}=
                    (\exists x^{-}\odot \exists x^{-})^{\sim}$ \\
$\hspace*{3.50cm}$ $=((\forall x)^{-}\odot (\forall x)^{-})^{\sim}=\forall x\oplus \forall x$. \\ 
Conversely, if $(U_7)$ holds, then we have: \\
$\hspace*{2.00cm}$ $\exists(x\odot x)=\exists(x^{-}\oplus x^{-})^{\sim}=(\forall(x^{-}\oplus x^{-}))^{\sim}=
                   (\forall x^{-}\oplus \forall x^{-})^{\sim}$ \\
$\hspace*{3.50cm}$ $=((\exists x)^{-}\oplus (\exists x)^{-})^{\sim}=\exists x\odot \exists x$. \\
Thus, $(E_7)$ is equivalent to $(U_7)$. \\
We conclude that the axioms $(E_1)-(E_7)$ are equivalent to axioms $(U_1)-(U_7)$.                
\end{proof}

\begin{corollary} \label{qinv-30-20} The following hold: \\
$(1)$ There is a one-to-one correspondence between existential and universal quantifiers on $X;$ \\ 
$(2)$ There is a one-to-one correspondence between weak existential and weak universal quantifiers on $X$.  
\end{corollary}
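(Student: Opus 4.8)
The plan is to use the synchronization construction of Lemma \ref{gm-30} to pair each quantifier with its synchronized partner, and to invoke Theorem \ref{qinv-30} to guarantee that this partner is a quantifier of the dual type.

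First I would observe that any existential quantifier $\exists$ is, by Definition \ref{qinv-10}, a good map on $X$, so Lemma \ref{gm-30} produces a uniquely determined synchronized partner $\forall$, defined by $\forall x=(\exists x^{-})^{\sim}=(\exists x^{\sim})^{-}$, with $(\exists,\forall)\in\mathcal{SM}(X)$. By Theorem \ref{qinv-30}$(1)$ this $\forall$ is a universal quantifier, so we obtain a map $\Phi$ from the set of existential quantifiers on $X$ to the set of universal quantifiers on $X$. Symmetrically, every universal quantifier $\forall$ is a good map, and Lemma \ref{gm-30} yields its synchronized partner $\exists$ via $\exists x=(\forall x^{-})^{\sim}=(\forall x^{\sim})^{-}$; by Theorem \ref{qinv-30}$(2)$ this $\exists$ is an existential quantifier, giving a map $\Psi$ in the reverse direction.

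The remaining point is that $\Phi$ and $\Psi$ are mutually inverse, and this is precisely the involutivity of the synchronization construction already recorded in Lemma \ref{gm-30}: if $\forall$ is obtained from $\exists$ by $\forall x=(\exists x^{-})^{\sim}$, then the same lemma asserts $\exists x=(\forall x^{-})^{\sim}=(\forall x^{\sim})^{-}$, so that $\Psi(\Phi(\exists))=\exists$; the reverse composition $\Phi(\Psi(\forall))=\forall$ follows in the same way, using the symmetry $(\tau,\sigma)\in\mathcal{SM}(X)\Rightarrow(\sigma,\tau)\in\mathcal{SM}(X)$ noted after Definition \ref{gm-40}. Hence $\Phi$ is a bijection, which establishes $(1)$.

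For $(2)$ I would run the identical argument restricted to weak quantifiers, the only change being to replace the appeal to Theorem \ref{qinv-30}$(1)$,$(2)$ by the partial equivalence $(E_1)$--$(E_6)\Leftrightarrow(U_1)$--$(U_6)$ that is proved inside Theorem \ref{qinv-30}: since those six axioms are shown equivalent for a synchronized pair, the synchronized partner of a weak existential quantifier is a weak universal quantifier and conversely, and the same involution argument delivers the bijection. I expect no genuine obstacle here, as the entire substantive content lies in Theorem \ref{qinv-30}; the corollary needs only the symmetry and involutivity of synchronization, both already in hand.
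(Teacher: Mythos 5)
Your proposal is correct and is precisely the argument the paper intends: the corollary is stated without proof as an immediate consequence of Theorem \ref{qinv-30}, with the bijection given by the synchronization construction of Lemma \ref{gm-30} (whose involutivity and symmetry supply the mutual-inverse property), exactly as you spell out. Your observation for part $(2)$ — that the proof of Theorem \ref{qinv-30} establishes each equivalence $(E_i)\Leftrightarrow(U_i)$ separately, so the restriction to axioms $(E_1)$--$(E_6)$ and $(U_1)$--$(U_6)$ goes through for weak quantifiers — is also the correct reading of how the paper's theorem yields the weak case.
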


Denote by: \\
$\hspace*{1cm}$ $\mathcal{QSM}(X)$ the set of all $(\exists,\forall)\in \mathcal{SM}(X)$ such that $\exists$ is an existential quantifier on $X$ or $\forall$ is a universal quantifier on $X$. \\
$\hspace*{1cm}$ $\mathcal{QSM}^{\mathcal{W}}(X)$ the set of all $(\exists,\forall)\in \mathcal{SM}(X)$ such that $\exists$ is a weak existential quantifier on $X$ or $\forall$ is a weak universal quantifier on $X$. \\
$\hspace*{1cm}$ $\mathcal{QSSM}(X)$ the set of all $(\exists,\forall)\in \mathcal{SSM}(X)$ such that $\exists$ is an existential quantifier on $X$ or $\forall$ is a universal quantifier on $X$. \\
$\hspace*{1cm}$ $\mathcal{QSSM}^{\mathcal{W}}(X)$ the set of all $(\exists,\forall)\in \mathcal{SSM}(X)$ such that $\exists$ is a weak existential quantifier on $X$ or $\forall$ is a weak universal quantifier on $X$. \\
Obviously, $\mathcal{QSM}(X)\subseteq \mathcal{QSM}^{\mathcal{W}}(X)$ and 
$\mathcal{QSSM}(X)\subseteq \mathcal{QSSM}^{\mathcal{W}}(X)$. 

\begin{proposition} \label{qinv-30-30} Consider the axioms: \\
$(E_4^{\prime})$ $\exists((\exists x)^{\sim}\ra y)=(\exists x)^{\sim}\ra \exists y$, 
                 $\exists((\exists x)^{-}\rs y)=(\exists x)^{-}\rs \exists y;$ \\
$(E_5^{\prime})$ $\exists(x^{\sim}\ra x)=(\exists x)^{\sim}\ra \exists x$,  
                 $\exists(x^{-}\rs x)=(\exists x)^{-}\rs \exists x;$ \\
$(U_6^{\prime})$ $\forall((\forall x)^{\sim}\ra y)=(\forall x)^{\sim}\ra \forall y$, 
                 $\forall((\forall x)^{-}\rs y)=(\forall x)^{-}\rs \forall y;$ \\
$(U_7^{\prime})$ $\forall(x^{\sim}\ra x)=(\forall x)^{\sim}\ra \forall x$,  
                 $\forall(x^{-}\rs x)=(\forall x)^{-}\rs \forall x$. \\
The axioms from the pairs $((E_4), (E_4^{\prime}))$, $((E_5), (E_5^{\prime}))$, $((U_6), (U_6^{\prime}))$, 
$((U_7), (U_7^{\prime}))$ are equivalent. 
\end{proposition}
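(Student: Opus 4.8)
The plan is to recognize that each primed axiom is simply the corresponding unprimed axiom with the operation $\oplus$ expanded through its defining identity $(S)$, namely $x\oplus y=y^{\sim}\ra x=x^{-}\rs y$. Since both representations in $(S)$ are simultaneously available in any weakly involutive unital quantum B-algebra (they coincide by Proposition \ref{inv-qb-50}), every equivalence reduces to a substitution: no order arguments or inequalities are needed. I would treat the four pairs separately, but they all amount to the same bookkeeping, so I describe the existential ones and note that the universal ones are identical.

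First I would handle $((E_4),(E_4^{\prime}))$. The axiom $(E_4)$ records two equalities, $\exists(x\oplus \exists y)=\exists x\oplus \exists y$ and $\exists(\exists x\oplus y)=\exists x\oplus \exists y$. In the first, I would expand each $\oplus$ via $a\oplus b=b^{\sim}\ra a$, turning $x\oplus \exists y$ into $(\exists y)^{\sim}\ra x$ and $\exists x\oplus \exists y$ into $(\exists y)^{\sim}\ra \exists x$; after interchanging the names $x$ and $y$ this is exactly $\exists((\exists x)^{\sim}\ra y)=(\exists x)^{\sim}\ra \exists y$, the first identity of $(E_4^{\prime})$. In the second equality I would instead use $a\oplus b=a^{-}\rs b$, which rewrites $\exists x\oplus y$ as $(\exists x)^{-}\rs y$ and $\exists x\oplus \exists y$ as $(\exists x)^{-}\rs \exists y$, yielding the second identity of $(E_4^{\prime})$. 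Running the same substitutions backwards shows that the two identities of $(E_4^{\prime})$ return the two equalities of $(E_4)$, so $(E_4)\Leftrightarrow (E_4^{\prime})$.

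For $((E_5),(E_5^{\prime}))$ the single equality $\exists(x\oplus x)=\exists x\oplus \exists x$ carries both forms at once: expanding each $\oplus$ through $a\oplus b=b^{\sim}\ra a$ gives $\exists(x^{\sim}\ra x)=(\exists x)^{\sim}\ra \exists x$, while expanding through $a\oplus b=a^{-}\rs b$ gives $\exists(x^{-}\rs x)=(\exists x)^{-}\rs \exists x$; these are precisely the two identities of $(E_5^{\prime})$, and either one recovers $(E_5)$ upon reversing the substitution. The universal pairs $((U_6),(U_6^{\prime}))$ and $((U_7),(U_7^{\prime}))$ are then handled verbatim as $((E_4),(E_4^{\prime}))$ and $((E_5),(E_5^{\prime}))$, respectively, with $\forall$ in place of $\exists$.

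There is no real obstacle: the entire content is choosing which of the two forms of $\oplus$ to apply on each side, plus the harmless renaming of variables needed to match the first identity of each primed pair. The only point worth stating explicitly is that both equalities in $(S)$ are simultaneously valid, which is exactly what Proposition \ref{inv-qb-50} secures in the weakly involutive setting and is what makes two primed identities (rather than one) appear in each pair.
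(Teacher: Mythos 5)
Your proposal is correct and takes essentially the same approach as the paper: the paper's proof likewise consists of nothing but expanding $\oplus$ through the two forms of its defining identity $(S)$, using the $b^{\sim}\ra a$ form to match the first identity of each primed axiom and the $a^{-}\rs b$ form to match the second, with the same variable renaming for $(E_4)$/$(U_6)$. The one point you make explicit (that both representations in $(S)$ are simultaneously valid via Proposition \ref{inv-qb-50}) is exactly what the paper relies on when introducing $\oplus$, so there is no gap.
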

\begin{proof} We use the definition of $\oplus$. \\
$(E_4)\Rightarrow (E_4^{\prime})$ 
$\exists((\exists x)^{\sim}\ra y)=\exists(y\oplus \exists x)=\exists y\oplus \exists x
                                 =(\exists x)^{\sim}\ra \exists x$. \\
$\exists((\exists x)^{-}\rs y)=\exists(\exists x\oplus y)=\exists x\oplus \exists y
                              =(\exists x)^{-}\rs \exists x$. \\
$(E_4^{\prime})\Rightarrow (E_4)$ 
$\exists(x\oplus \exists y)=\exists((\exists y)^{\sim}\ra x)=(\exists y)^{\sim}\ra \exists x
                           =\exists x\oplus \exists y$. \\
$\exists(\exists x\oplus y)=\exists((\exists y)^{-}\rs y)=(\exists y)^{-}\rs \exists y
                           =\exists x\oplus \exists y$. \\
$(E_5)\Rightarrow (E_5^{\prime})$
$\exists(x^{\sim}\ra x)=\exists(x\oplus x)=\exists x\oplus \exists x=(\exists x)^{\sim}\ra \exists x$. \\              $\exists(x^{-}\rs x)=\exists(x\oplus x)=\exists x\oplus \exists x=(\exists x)^{-}\rs \exists x$. \\                    $(E_5^{\prime})\Rightarrow (E_5)$           
$\exists(x\oplus x)=\exists(x^{\sim}\ra x)=(\exists x)^{\sim}\ra \exists x=\exists x\oplus \exists x$. \\
$(U_6)\Rightarrow  (U_6^{\prime})$
$\forall((\forall x)^{\sim}\ra y)=\forall(y\oplus \forall x)=\forall y\oplus \forall x=
         (\forall x)^{\sim}\ra \forall y$. \\
$\forall((\forall x)^{-}\rs y)=\forall(\forall x\oplus y)=\forall x\oplus \forall y=(\forall x)^{-}\rs \forall y$. \\
$(U_6^{\prime})\Rightarrow (U_6)$
$\forall(x\oplus \forall y)=\forall((\forall y)^{\sim}\ra x)=(\forall y)^{\sim}\ra \forall x)=
         \forall x\oplus \forall y$. \\ 
$\forall(\forall x\oplus y)=\forall((\forall x)^{-}\rs y)=(\forall x)^{\sim}\rs \forall y)=
         \forall x\oplus \forall y$. \\ 
$(U_7)\Rightarrow  (U_7^{\prime})$
$\forall(x^{\sim}\ra x)=\forall(x\oplus x)=\forall x\oplus \forall x=(\forall x)^{\sim}\ra \forall x$. \\
$\forall(x^{-}\rs x)=\forall(x\oplus x)=\forall x\oplus \forall x=(\forall x)^{-}\rs \forall x$. \\
$(U_7^{\prime})\Rightarrow (U_7)$
$\forall(x\oplus x)=\forall(x^{\sim}\ra x)=(\forall x)^{\sim}\ra \forall x=\forall x\oplus \forall x$. 
\end{proof}

\begin{corollary} \label{qinv-30-40} Let $\exists, \forall\in \mathcal{GM}(X)$. Then: \\
$(1)$ $\exists$ is an existential quantifier if it satisfies axioms $(E_1)$, $(E_2)$, $(E_3)$, $(E_4^{\prime})$, 
$(E_5^{\prime})$, $(E_6)$, $(E_7);$ \\
$(2)$ $\exists$ is a weak existential quantifier if it satisfies axioms $(E_1)$, $(E_2)$, $(E_3)$, $(E_4^{\prime})$, 
$(E_5^{\prime})$, $(E_6);$ \\
$(3)$ $\forall$ is a universal quantifier if it satisfies axioms $(U_1)$, $(U_2)$, $U_3)$, $(U_4)$, $(U_5)$,  $(U_6^{\prime})$, $(U_7^{\prime});$ \\
$(4)$ $\forall$ is a weak universal quantifier if it satisfies axioms $(U_1)$, $(U_2)$, $U_3)$, $(U_4)$, $(U_5)$,  $(U_6^{\prime})$. 
\end{corollary}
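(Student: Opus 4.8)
The plan is to derive this directly from Proposition \ref{qinv-30-30}, which established the equivalences $(E_4)\Leftrightarrow(E_4')$, $(E_5)\Leftrightarrow(E_5')$, $(U_6)\Leftrightarrow(U_6')$ and $(U_7)\Leftrightarrow(U_7')$. Since the four notions in question are defined entirely in terms of the unprimed axioms, the corollary amounts to no more than substituting each primed axiom for its unprimed counterpart. First I would recall the relevant definitions: an existential quantifier satisfies $(E_1)$--$(E_7)$ (Definition \ref{qinv-10}), a weak existential quantifier satisfies $(E_1)$--$(E_6)$ (Definition \ref{qinv-10-10}), a universal quantifier satisfies $(U_1)$--$(U_7)$ (Definition \ref{qinv-20}), and a weak universal quantifier satisfies $(U_1)$--$(U_6)$ (Definition \ref{qinv-20-10}).

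For part $(1)$ I would assume that $\exists\in\mathcal{GM}(X)$ satisfies $(E_1)$, $(E_2)$, $(E_3)$, $(E_4')$, $(E_5')$, $(E_6)$, $(E_7)$. Applying the implications $(E_4')\Rightarrow(E_4)$ and $(E_5')\Rightarrow(E_5)$ from Proposition \ref{qinv-30-30}, I obtain that $\exists$ satisfies all of $(E_1)$--$(E_7)$, so $\exists$ is an existential quantifier. Part $(2)$ is identical except that $(E_7)$ is dropped on both sides, yielding $(E_1)$--$(E_6)$ and hence a weak existential quantifier.

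For part $(3)$ I would assume that $\forall$ satisfies $(U_1)$, $(U_2)$, $(U_3)$, $(U_4)$, $(U_5)$, $(U_6')$, $(U_7')$ and invoke $(U_6')\Rightarrow(U_6)$ together with $(U_7')\Rightarrow(U_7)$ to recover the full list $(U_1)$--$(U_7)$, so that $\forall$ is a universal quantifier. Part $(4)$ drops $(U_7')$, leaving $(U_1)$--$(U_6)$ and therefore a weak universal quantifier.

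Since every implication needed has already been proved in Proposition \ref{qinv-30-30}, no genuine obstacle arises: the entire content of the corollary is the bookkeeping of matching the hypotheses of each case against the defining axiom list. The only point requiring a moment's care is to confirm that the primed axioms occur exactly in the positions $(E_4'),(E_5'),(U_6'),(U_7')$ and are not silently expected to replace axioms such as $(E_6)$ or $(U_4)$, which retain their original form throughout.
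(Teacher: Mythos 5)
Your proposal is correct and matches the paper's intended argument exactly: the paper states this as an immediate corollary of Proposition \ref{qinv-30-30}, leaving the proof implicit, and your explicit write-up simply spells out that bookkeeping (replacing each primed axiom by its unprimed equivalent to recover the defining axiom lists). Nothing further is needed.
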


\begin{lemma} \label{qinv-40} Let $(\exists,\forall)\in \mathcal{QSM}^{\mathcal{W}}(X)$. 
Then the following hold, for all $x\in X:$ \\
$(1)$ $\exists x^{-}=(\forall x)^{-}$, $\exists x^{\sim}=(\forall x)^{\sim};$ \\ 
$(2)$ $\forall x^{-}=(\exists x)^{-}$, $\forall x^{\sim}=(\exists x)^{\sim};$ \\ 
$(3)$ $\exists\exists x=\exists x$, $\forall\forall x=\forall x$.   
\end{lemma}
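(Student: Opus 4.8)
The plan is to first unpack what membership in $\mathcal{QSM}^{\mathcal{W}}(X)$ actually supplies. By definition $(\exists,\forall)\in\mathcal{SM}(X)$ and at least one of the two is a weak quantifier; but by the weak analogue of Theorem \ref{qinv-30} recorded in Corollary \ref{qinv-30-20}$(2)$ (the equivalences $(E_i)\Leftrightarrow(U_i)$ for $i=1,\dots,6$ hold for a synchronized pair), the ``or'' in the definition is really an ``and'': $\exists$ is a weak existential quantifier \emph{and} $\forall$ is a weak universal quantifier. Hence I may freely invoke $(E_1),(E_4)$ and $(U_1),(U_4)$ together with the synchronization identities of Proposition \ref{gm-50}.

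For parts $(1)$ and $(2)$ there is nothing to prove beyond reading off Proposition \ref{gm-50}. Since $(\exists,\forall)\in\mathcal{SM}(X)$, part $(2)$ of that proposition is $(\forall x)^{-}=\exists x^{-}$ and $(\forall x)^{\sim}=\exists x^{\sim}$, which is exactly statement $(1)$ here; and part $(1)$ is $(\exists x)^{-}=\forall x^{-}$ and $(\exists x)^{\sim}=\forall x^{\sim}$, which is statement $(2)$.

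The substance is part $(3)$, and the key observation I would isolate first is that $\oplus$ has $d$ as a two-sided identity while $\odot$ has $u$ as a two-sided identity. Using the defining equations $(S)$ and Remark \ref{inv-qb-20-10} (which gives $d^{\sim}=d^{-}=u$ and $u^{\sim}=u^{-}=d$), I would compute $x\oplus d=d^{\sim}\ra x=u\ra x=x$ and $d\oplus x=x^{\sim}\ra d=x^{\sim-}=x$; dually, from $x\odot y=(x\ra y^{-})^{\sim}=(y\rs x^{\sim})^{-}$ one gets $u\odot y=(u\ra y^{-})^{\sim}=y^{-\sim}=y$ and $x\odot u=(u\rs x^{\sim})^{-}=x^{\sim-}=x$. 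With these in hand the quantifier axioms collapse to idempotency: applying $(E_4)$ at $y:=d$ gives $\exists\exists x=\exists(\exists x\oplus d)=\exists x\oplus\exists d=\exists x\oplus d=\exists x$, where the second equality is $(E_4)$ and the third is $(E_1)$; symmetrically, $(U_4)$ at $y:=u$ with $(U_1)$ gives $\forall\forall x=\forall(\forall x\odot u)=\forall x\odot\forall u=\forall x\odot u=\forall x$. Alternatively, once $\exists\exists=\exists$ is shown, $\forall\forall=\forall$ is immediate from Corollary \ref{gm-60-20}.

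I expect the only genuine step to be identifying the correct neutral elements; everything else is a single substitution. The mild subtlety to watch is the reversal forced by $(S)$ and by $d^{\sim}=u$, $u^{\sim}=d$: it is $d$ (not $u$) that is neutral for $\oplus$ and $u$ (not $d$) that is neutral for $\odot$, so the two idempotencies must be extracted from $(E_4)$ and $(U_4)$ separately rather than from one symmetric computation.
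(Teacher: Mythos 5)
Your proposal is correct and takes essentially the same route as the paper: parts $(1)$ and $(2)$ are read off Proposition \ref{gm-50}, and your computation $\exists\exists x=\exists(\exists x\oplus d)=\exists x\oplus\exists d=\exists x$ is exactly the paper's proof of the first half of $(3)$, which applies $(E_4^{\prime})$ (the $(S)$-translation of $(E_4)$) at $y:=d$ together with $(E_1)$, using that $(\exists x)^{-}\rs d=\exists x\oplus d=(\exists x)^{-\sim}=\exists x$. The only (harmless) variation is in the second half of $(3)$: the paper obtains $\forall\forall x=\forall x$ by specializing $(U_6^{\prime})$ at $d$ and using $(U_2)$, whereas you specialize $(U_4)$ at $u$ and use $(U_1)$ --- a dual instantiation of the same idea, equally within the weak axioms $(E_1)$--$(E_6)$ and $(U_1)$--$(U_6)$.
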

\begin{proof}
$(1)$, $(2)$ follow by Proposition \ref{gm-50}. \\
$(3)$ Using $(E_4^{\prime})$ and $(E_1)$ we have 
$\exists\exists x=\exists((\exists x)^{-\sim})=\exists((\exists x)^{-}\rs d)=(\exists x)^{-}\rs \exists d=
(\exists x)^{-}\rs d=(\exists x)^{-\sim}=\exists x$. 
Similarly, by $(U_6^{\prime})$ and $(U_2)$, $\forall\forall x=\forall((\forall)^{-\sim})=
\forall((\forall x)^{-}\rs d)=(\forall x)^{-}\rs \forall d=(\forall x)^{-}\rs d=(\forall x)^{-\sim}=\forall x$. 
\end{proof}

\begin{lemma} \label{qinv-50} Let $(\exists,\forall)\in \mathcal{QSSM}^{\mathcal{W}}(X)$. 
Then the following hold, for all $x\in X:$ \\
$(1)$ $\exists\forall x=\forall x$, $\forall\exists x=\exists x;$ \\
$(2)$ $\exists(\exists x)^{-}=(\exists x)^{-}$, $\exists(\exists x)^{\sim}=(\exists x)^{\sim};$ \\
$(3)$ $\forall(\forall x)^{-}=(\forall x)^{-}$, $\forall(\forall x)^{\sim}=(\forall x)^{\sim}$. 
\end{lemma}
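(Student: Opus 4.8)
The plan is to derive all three statements from the strong-synchronization hypothesis together with the conjugation identities of Lemma \ref{qinv-40}, and the key observation is that part $(1)$ is essentially a restatement of the defining property of $\mathcal{SSM}(X)$. I would first record that $(\exists,\forall)\in \mathcal{QSSM}^{\mathcal{W}}(X)\subseteq \mathcal{SSM}(X)$, so the pair $(\exists,\forall)$ is strong synchronized.

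First I would handle $(1)$. By Definition \ref{gm-60-30}, a strong synchronized pair satisfies one of the equivalent conditions of Proposition \ref{gm-60}$(1)$. Identifying $\tau:=\exists$ and $\sigma:=\forall$, this condition reads $\exists\forall=\forall$ if and only if $\forall\exists=\exists$; since one side holds, both do. Hence $\exists\forall x=\forall x$ and $\forall\exists x=\exists x$ for all $x\in X$, which is precisely $(1)$.

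Next I would obtain $(2)$ and $(3)$ from $(1)$ by feeding in Lemma \ref{qinv-40}$(1)$,$(2)$, which applies because $\mathcal{QSSM}^{\mathcal{W}}(X)\subseteq \mathcal{QSM}^{\mathcal{W}}(X)$. For $(2)$, I would rewrite $(\exists x)^{-}=\forall x^{-}$ using Lemma \ref{qinv-40}$(2)$ and then compute $\exists(\exists x)^{-}=\exists\forall x^{-}=\forall x^{-}=(\exists x)^{-}$, where the middle equality is $\exists\forall=\forall$ from $(1)$; the argument for $(\exists x)^{\sim}$ is identical with $\sim$ in place of $-$. For $(3)$, I would rewrite $(\forall x)^{-}=\exists x^{-}$ using Lemma \ref{qinv-40}$(1)$ and apply $\forall\exists=\exists$ to get $\forall(\forall x)^{-}=\forall\exists x^{-}=\exists x^{-}=(\forall x)^{-}$, and likewise for $\sim$.

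There is no genuine obstacle here; the only points requiring care are the bookkeeping of which conjugate ($-$ versus $\sim$) pairs with which quantifier in Lemma \ref{qinv-40}, and the correct identification of the ordered pair $(\exists,\forall)$ with $(\tau,\sigma)$ when invoking Proposition \ref{gm-60}$(1)$. Once $(1)$ is in hand, $(2)$ and $(3)$ are direct substitutions.
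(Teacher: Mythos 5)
Your proposal is correct and follows essentially the same route as the paper: part $(1)$ is exactly Proposition \ref{gm-60}$(1)$ applied to the strong synchronized pair $(\exists,\forall)$, and parts $(2)$ and $(3)$ are the same substitutions via Lemma \ref{qinv-40} combined with $(1)$. The only difference is that you spell out the identification $(\tau,\sigma)=(\exists,\forall)$ and the conjugation bookkeeping more explicitly than the paper does.
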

\begin{proof}
$(1)$ follows by Proposition \ref{gm-60}$(1)$, since $\exists$ and $\forall$ are strong synchronized. \\
$(2)$ Using $(1)$ and Lemma \ref{qinv-40}, we get 
$\exists(\exists x)^{-}=\exists\forall x^{-}=\forall x^{-}=(\exists x)^{-}$ and 
$\exists(\exists x)^{\sim}=\exists\forall x^{\sim}=\forall x^{\sim}=(\exists x)^{\sim}$. \\
$(3)$ Similarly, $\forall(\forall x)^{-}=\forall\exists x^{-}=\exists x^{-}=(\forall x)^{-}$ and 
$\forall(\forall x)^{\sim}=\forall\exists x^{\sim}=\exists x^{\sim}=(\forall x)^{\sim}$. 
\end{proof}

\begin{proposition} \label{qinv-60} Let $(\exists,\forall)\in \mathcal{QSSM}^{\mathcal{W}}(X)$. 
Then the following hold, for all $x, y\in X:$ \\
$(1)$ $\exists(\forall x\ra y)=\forall x\ra \exists y$, $\exists(\forall x\rs y)=\forall x\rs \exists y;$ \\ 
$(2)$ $\exists(x\ra \forall y)=\forall x\ra \forall y$, $\exists(x\rs \forall y)=\forall x\rs \forall y;$ \\
$(3)$ $\exists(\exists x\ra y)=\exists x\ra \exists y$, $\exists(\exists x\rs y)=\exists x\rs \exists y;$ \\ 
$(4)$ $\forall(\exists x\ra y)=\exists x\ra \forall y$, $\forall(\exists x\rs y)=\exists x\rs \forall y;$ \\ 
$(5)$ $\forall(x\ra \exists y)=\exists x\ra \exists y$, $\forall(x\rs \exists y)=\exists x\rs \exists y;$ \\
$(6)$ $\forall(\forall x\ra y)=\forall x\ra \forall y$, $\forall(\forall x\rs y)=\forall x\rs \forall y;$ \\
$(7)$ $\forall(x\ra \forall y)=\exists x\ra \forall y$, $\forall(x\rs \forall y)=\exists x\rs \forall y$. 
\end{proposition}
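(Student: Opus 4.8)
The plan is to reduce every identity to the additive operation $\oplus$ of $(S)$, where the compatibility axioms $(E_4)$ and $(U_6)$ are directly available (both belong to the weak fragment, so the hypothesis $(\exists,\forall)\in\mathcal{QSSM}^{\mathcal{W}}(X)$, together with the correspondence of Theorem~\ref{qinv-30}, guarantees $\exists$ satisfies $(E_4)$ and $\forall$ satisfies $(U_6)$), and then translate back to $\ra,\rs$. The two translation rules I would use, both immediate from $(S)$ and weak involutivity (so that $a^{-\sim}=a^{\sim-}=a$), are
\[
a\ra b=b\oplus a^{-},\qquad a\rs b=a^{\sim}\oplus b .
\]
Alongside these I would record one structural fact that does the real work: every value of $\exists$ or $\forall$, and each of its $(-)^{-}$ and $(-)^{\sim}$, is fixed by \emph{both} quantifiers. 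Indeed $\exists\exists x=\exists x$, $\forall\forall x=\forall x$ (Lemma~\ref{qinv-40}$(3)$), $\exists\forall x=\forall x$, $\forall\exists x=\exists x$ (Lemma~\ref{qinv-50}$(1)$), while $(\exists x)^{-}=\forall x^{-}$ and $(\forall x)^{-}=\exists x^{-}$ (Lemma~\ref{qinv-40}$(1),(2)$) show the negations $(\exists x)^{\pm}$, $(\forall x)^{\pm}$ again lie in both images and hence are fixed by both maps (Lemmas~\ref{qinv-40}$(3)$ and \ref{qinv-50}$(2),(3)$).

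For the four identities $(1),(3),(4),(6)$, whose left implicand is $\exists x$ or $\forall x$, I would argue as follows for the $\ra$-case, the $\rs$-case being symmetric via $a\rs b=a^{\sim}\oplus b$. Writing the left implicand as $a\in\{\exists x,\forall x\}$, the rule gives $a\ra y=y\oplus a^{-}$, and $a^{-}$ is fixed by the relevant quantifier by the observation above. Hence $(E_4)$ (for $(1),(3)$) or $(U_6)$ (for $(4),(6)$) applies with $a^{-}$ in its distinguished slot, yielding $\exists(y\oplus a^{-})=\exists y\oplus a^{-}=a\ra\exists y$ after translating back. The one point to watch is matching the outer quantifier to the fixed-point fact: for $(1)$ the implicand $\forall x$ has $(\forall x)^{-}=\exists x^{-}$ fixed by $\exists$, and dually for $(4)$, $(\exists x)^{-}=\forall x^{-}$ is fixed by $\forall$.

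The genuinely mixed cases are $(2),(5),(7)$, where the \emph{right} implicand carries the quantifier; here the outer and inner quantifiers need not agree, which is exactly why the right-hand sides $\forall x\ra\forall y$, $\exists x\ra\exists y$, $\exists x\ra\forall y$ show the other quantifier on the variable side. For $(2)$ I would write $x\ra\forall y=\forall y\oplus x^{-}$; since $\forall y=\exists\forall y$ is fixed by $\exists$, the second form of $(E_4)$ gives $\exists(\forall y\oplus x^{-})=\forall y\oplus\exists x^{-}$, and $\exists x^{-}=(\forall x)^{-}$ (Lemma~\ref{qinv-40}$(1)$) translates this back to $\forall x\ra\forall y$. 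Identity $(5)$ is the same computation with $\exists y$ in place of $\forall y$ and outer quantifier $\forall$, using $(U_6)$ and $\forall x^{-}=(\exists x)^{-}$; identity $(7)$ uses $\forall y=\forall\forall y$ and $(U_6)$ likewise. The $\rs$-halves use the companion rule $a\rs b=a^{\sim}\oplus b$ and the $\sim$-versions of the same lemmas.

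The step I expect to be the main obstacle is precisely the bookkeeping in $(2),(5),(7)$: one must (a) place the fixed quantified factor in the correct $\oplus$-slot so that the matching form of $(E_4)$ or $(U_6)$ applies, and (b) correctly convert $x^{-}$ into $(\forall x)^{-}$ or $(\exists x)^{-}$ via Lemma~\ref{qinv-40}, since it is this single substitution that switches the quantifier on the variable side and produces the asymmetric right-hand sides. Everything else reduces to routine use of the two translation rules and the fixed-point identities.
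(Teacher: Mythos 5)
Your proposal is correct and takes essentially the same route as the paper: the paper rewrites each quantified implication via the contraposition identities of Proposition \ref{inv-qb-50} and the primed axioms $(E_4^{\prime})$, $(U_6^{\prime})$ of Proposition \ref{qinv-30-30}, combined with the fixed-point and negation-interchange facts of Lemmas \ref{qinv-40} and \ref{qinv-50} --- and those primed axioms are precisely the $\ra/\rs$ translations of $(E_4)$, $(U_6)$ that you carry out inline through $\oplus$. The only difference is presentational: you unfold Proposition \ref{qinv-30-30} in each case rather than citing it, and you prove the seven items independently where the paper chains $(3)$, $(6)$, $(7)$ onto $(2)$ and $(5)$.
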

\begin{proof} 
We apply Propositions \ref{inv-qb-50}, \ref{qinv-30-30} and Lemma \ref{qinv-50}. \\
$(1)$ Using $(E_4^{\prime})$ we have: \\
$\hspace*{2cm}$ $\exists(\forall x\ra y)=\exists((\exists x^{-})^{\sim}\ra y)
                =(\exists x^{-})^{\sim}\ra \exists y=\forall x\ra \exists y$. \\
$\hspace*{2cm}$ $\exists(\forall x\rs y)=\exists((\exists x^{\sim})^{-}\rs y)
                =(\exists x^{\sim})^{-}\rs \exists y=\forall x\rs \exists y$. \\
$(2)$ Applying $(E_4^{\prime})$ we get: \\
$\hspace*{2.00cm}$ $\exists(x\ra \forall y)=\exists((\forall y)^{-}\rs x^{-})=\exists((\exists\forall y)^{-}\rs x^{-})
                    =(\exists\forall y)^{-}\rs \exists x^{-}$ \\
$\hspace*{3.85cm}$ $=(\forall y)^{-}\rs (\forall x)^{-}=\forall x\ra \forall y$. \\
$\hspace*{2.00cm}$ $\exists(x\rs \forall y)=\exists((\forall y)^{\sim}\ra x^{\sim})
                    =\exists((\exists\forall y)^{\sim}\ra x^{\sim})=(\exists\forall y)^{\sim}\ra \exists x^{\sim}$ \\
$\hspace*{3.85cm}$ $=(\forall y)^{\sim}\ra (\forall x)^{\sim}=\forall x\rs \forall y$. \\
$(3)$ By $(E_4^{\prime})$ and $(2)$ we have: \\ 
$\hspace*{2.00cm}$ $\exists(\exists x\ra y)=\exists(y^{-}\rs (\exists x)^{-})=\exists(y^{-}\rs \forall x^{-})
                     =\forall y^{-}\rs \forall x^{-}$ \\
$\hspace*{3.85cm}$ $=(\exists y)^{-}\rs (\exists x)^{-}=\exists x\ra \exists y$. \\ 
$\hspace*{2.00cm}$ $\exists(\exists x\rs y)=\exists(y^{\sim}\ra (\exists x)^{\sim})
                     =\exists(y^{\sim}\ra \forall x^{\sim})=\forall y^{\sim}\ra \forall x^{\sim}$ \\
$\hspace*{3.85cm}$ $=(\exists y)^{\sim}\ra (\exists x)^{\sim}=\exists x\rs \exists y$. \\
$(4)$ Using $(U_6^{\prime})$ we get: \\ 
$\hspace*{2.00cm}$ $\forall(\exists x\ra y)=\forall((\forall x^{-})^{\sim}\ra y)=(\forall x^{-})^{\sim}\ra \forall y 
                                           =\exists x\ra \forall y$. \\
$\hspace*{2.00cm}$ $\forall(\exists x\rs y)=\forall((\forall x^{\sim})^{-}\rs y)=(\forall x^{\sim})^{-}\rs \forall y
                                           =\exists x\rs \forall y$. \\
$(5)$ Applying $(U_6^{\prime})$ we have: \\ 
$\hspace*{2.00cm}$ $\forall(x\ra \exists y)=\forall((\exists y)^{-}\rs x^{-})
                     =\forall((\forall\exists y)^{-}\rs x^{-})=(\forall\exists y)^{-}\rs \forall x^{-}$ \\
$\hspace*{3.85cm}$ $=(\exists y)^{-}\rs (\exists x)^{-}=\exists x\ra \exists y$. \\
$\hspace*{2.00cm}$ $\forall(x\rs \exists y)=\forall((\exists y)^{\sim}\ra x^{\sim})
                     =\forall((\forall\exists y)^{\sim}\ra x^{\sim})=(\forall\exists y)^{\sim}\ra \forall x^{\sim}$ \\
$\hspace*{3.85cm}$ $=(\exists y)^{\sim}\ra (\exists x)^{\sim}=\exists x\rs \exists y$. \\
$(6)$ Taking into consideration $(5)$ we get: \\ 
$\hspace*{2.00cm}$ $\forall(\forall x\ra y)=\forall(y^{-}\rs (\forall x)^{-})
                    =\forall(y^{-}\rs \exists x^{-})=\exists y^{-}\rs \exists x^{-}$ \\        
$\hspace*{3.85cm}$ $=(\forall y)^{-}\rs (\forall x)^{-}=\forall x\ra \forall y$. \\             
$\hspace*{2.00cm}$ $\forall(\forall x\rs y)=\forall(y^{\sim}\ra (\forall x)^{\sim})
                    =\forall(y^{\sim}\ra \exists x^{\sim})=\exists y^{\sim}\ra \exists x^{\sim}$ \\ 
$\hspace*{3.85cm}$ $=(\forall y)^{\sim}\ra (\forall x)^{\sim}=\forall x\rs \forall y$. \\
$(7)$ Using Lemma \ref{qinv-50} and $(5)$ we have: \\
$\hspace*{2.00cm}$ $\forall(x\ra \forall y)=\forall(x\ra \exists\forall y)=\exists x\ra \exists\forall y
                                           =\exists x\ra \forall y$. \\
$\hspace*{2.00cm}$ $\forall(x\rs \forall y)=\forall(x\rs \exists\forall y)=\exists x\rs \exists\forall y
                                           =\exists x\rs \forall y$. 
\end{proof}

\begin{proposition} \label{qinv-60-10} Let $(\exists,\forall)\in \mathcal{QSSM}^{\mathcal{W}}(X)$. 
Then the following hold, for all $x, y\in X:$ \\
$(1)$ $\forall(\forall x\ra \forall y)=\forall x\ra \forall y$, 
      $\forall(\forall x\rs \forall y)=\forall x\rs \forall y;$ \\ 
$(2)$ $\exists(\exists x\ra \exists y)=\exists x\ra \exists y$, 
      $\exists(\exists x\rs \exists y)=\exists x\rs \exists y;$ \\
$(3)$ $\forall(\forall x\odot \forall y)=\forall x\odot \forall y;$ \\
$(4)$ $\exists(\exists x\odot \exists y)=\exists x\odot \exists y;$ \\ 
$(5)$ $\forall((x\ra \forall y)\rs \forall y)=(\forall x\ra \forall y)\rs \forall y$, 
      $\forall((x\rs \forall y)\ra \forall y)=(\forall x\rs \forall y)\ra \forall y$.   
\end{proposition}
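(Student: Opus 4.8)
The plan is to reduce each of the five identities to the one-sided results already established in Proposition \ref{qinv-60}, combined with the idempotence of the quantifiers from Lemma \ref{qinv-40}$(3)$, namely $\exists\exists x=\exists x$ and $\forall\forall x=\forall x$. None of the parts requires a fresh computation from the axioms; each is a specialization or a two-step chaining of identities already in hand.

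First I would dispatch $(1)$ and $(2)$, which are immediate. For $(1)$, apply Proposition \ref{qinv-60}$(6)$ with $y$ replaced by $\forall y$: this gives $\forall(\forall x\ra \forall y)=\forall x\ra \forall\forall y$, and since $\forall\forall y=\forall y$ the right-hand side collapses to $\forall x\ra \forall y$; the $\rs$ version uses the second identity of Proposition \ref{qinv-60}$(6)$ verbatim. Dually, $(2)$ follows from Proposition \ref{qinv-60}$(3)$ with $y$ replaced by $\exists y$ together with $\exists\exists y=\exists y$. For $(3)$ and $(4)$ I would invoke the quantifier axioms for $\odot$ directly: substituting $x:=\forall x$ in $\forall(x\odot \forall y)=\forall x\odot \forall y$ from $(U_4)$ gives $\forall(\forall x\odot \forall y)=\forall\forall x\odot \forall y=\forall x\odot \forall y$, which is $(3)$; substituting $x:=\exists x$ in $\exists(x\odot \exists y)=\exists x\odot \exists y$ from $(E_6)$ and using $\exists\exists x=\exists x$ gives $(4)$.

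The main work is $(5)$, where I chain two distinct parts of Proposition \ref{qinv-60}. For the first identity I set $w:=x\ra \forall y$, so the left-hand side is $\forall(w\rs \forall y)$. The second identity of Proposition \ref{qinv-60}$(7)$ then yields $\forall(w\rs \forall y)=\exists w\rs \forall y$, and the first identity of Proposition \ref{qinv-60}$(2)$ computes $\exists w=\exists(x\ra \forall y)=\forall x\ra \forall y$, so the expression becomes $(\forall x\ra \forall y)\rs \forall y$, as required. The second identity of $(5)$ is the symmetric argument: set $w:=x\rs \forall y$, apply the first identity of Proposition \ref{qinv-60}$(7)$ to obtain $\forall(w\ra \forall y)=\exists w\ra \forall y$, and use the second identity of Proposition \ref{qinv-60}$(2)$ to rewrite $\exists w=\forall x\rs \forall y$.

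The only point demanding care is bookkeeping in $(5)$: because parts $(2)$ and $(7)$ of Proposition \ref{qinv-60} each split into an $\ra$-version and an $\rs$-version, one must match the correct version to the outer connective at each step. Once the substitution $w:=x\ra \forall y$ (respectively $w:=x\rs \forall y$) is fixed, the pairing is forced and every step is a direct rewrite, so I do not anticipate any genuine obstacle beyond keeping the two implications straight.
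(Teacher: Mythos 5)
Your proof is correct and takes essentially the same route as the paper: parts $(1)$--$(4)$ by specializing Proposition \ref{qinv-60}$(6)$,$(3)$ and axioms $(U_4)$, $(E_6)$ together with idempotence from Lemma \ref{qinv-40}$(3)$, and part $(5)$ by chaining Proposition \ref{qinv-60}$(7)$ with Proposition \ref{qinv-60}$(2)$. If anything, your arrow bookkeeping in $(5)$ is more careful than the paper's own displayed computation, which writes the outer connective as $\ra$ where the statement has $\rs$.
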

\begin{proof} We will apply Lemma \ref{qinv-40} and Proposition \ref{qinv-60}. \\
$(1)$ By Proposition \ref{qinv-60}$(6)$ replacing $y$ by $\forall y$. \\
$(2)$ By Proposition \ref{qinv-60}$(3)$ replacing $y$ by $\exists y$. \\
$(3)$ By $(U_4)$ replacing $x$ by $\forall x$. \\
$(4)$ By $(E_6)$ replacing $x$ by $\exists x$. \\
$(5)$ Applying Proposition \ref{qinv-60}$(7)$,$(2)$ we get: \\
$\hspace*{2cm}$ $\forall((x\ra \forall y)\ra \forall y)=\exists(x\ra \forall y)\ra \forall y
                    =(\forall x\ra \forall y)\ra \forall y$. \\
Similarly, $\forall((x\rs \forall y)\ra \forall y)=(\forall x\rs \forall y)\ra \forall y$.                     
\end{proof}

The monadic operators on unital quantum B-algebras $(X,\le,\ra,\rs,u)$ were recently defined in \cite{Ciu36} . 
In what follows, we consider the monadic operators on weakly involutive unital quantum B-algebras 
$(X,\le,\ra,\rs,d,u)$. 

\begin{definition} \label{qinv-70} $\rm($\cite{Ciu36}$\rm)$ 
The structure $(X,\exists,\forall)$ is called a \emph{monadic quantum B-algebra} if the following conditions are satisfied, for all $x, y\in X:$ \\
$(MQB_1)$ $x\le \exists x;$ \\
$(MQB_2)$ $\forall x\le x;$ \\
$(MQB_3)$ $\forall(x\ra \exists y)=\exists x\ra \exists y$, $\forall(x\rs \exists y)=\exists x\rs \exists y;$ \\
$(MQB_4)$ $\forall(\exists x\ra y)=\exists x\ra \forall y$, $\forall(\exists x\rs y)=\exists x\rs \forall y;$ \\
$(MQB_5)$ $\exists\forall x=\forall x;$ \\
$(MQB_6)$ $\exists u=\forall u=u$. 
\end{definition}

If $(X,\exists,\forall)$ is monadic quantum B-algebra, then the pair $(\exists, \forall)$ is said to be a 
\emph{monadic operator} on $X$. 

\begin{definition} \label{qinv-70-10}
A monadic operator $(\exists, \forall)$ on $X$ is called \emph{strict} if $\exists d=\forall d=d$. 
\end{definition}

Denote by $\mathcal{MOP}(X)$ and $\mathcal{MOP}^{\mathcal{S}}(X)$ the set of all monadic operators and the 
set of all strict monadic operators on $X$, respectively. 

\begin{theorem} \label{qinv-80} $\mathcal{QSSM}^{\mathcal{W}}(X)\subseteq \mathcal{MOP}^{\mathcal{S}}(X)$.
\end{theorem}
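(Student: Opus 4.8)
The plan is to take an arbitrary pair $(\exists,\forall)\in \mathcal{QSSM}^{\mathcal{W}}(X)$ and verify directly that it satisfies the six axioms $(MQB_1)$--$(MQB_6)$ of Definition \ref{qinv-70} together with the strictness condition $\exists d=\forall d=d$. The essential observation is that almost no new computation is required: because the pair lies in $\mathcal{QSSM}^{\mathcal{W}}(X)$, it is a strong synchronized pair in which $\exists$ is a weak existential quantifier, and by the correspondence of Theorem \ref{qinv-30} (in its weak form, recorded in Corollary \ref{qinv-30-20}$(2)$) $\forall$ is the associated weak universal quantifier. Thus both families $(E_1)$--$(E_6)$ and $(U_1)$--$(U_6)$ are simultaneously at our disposal, along with all the identities derived from them in Lemma \ref{qinv-50} and Proposition \ref{qinv-60}.

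First I would dispatch the two order relations: axiom $(MQB_1)$, namely $x\le \exists x$, is exactly $(E_3)$, while $(MQB_2)$, namely $\forall x\le x$, is exactly $(U_3)$. Next, the two nontrivial ``transfer'' identities are read off directly from Proposition \ref{qinv-60}: clause $(5)$ there gives $\forall(x\ra \exists y)=\exists x\ra \exists y$ and $\forall(x\rs \exists y)=\exists x\rs \exists y$, which is precisely $(MQB_3)$, and clause $(4)$ gives $\forall(\exists x\ra y)=\exists x\ra \forall y$ and $\forall(\exists x\rs y)=\exists x\rs \forall y$, which is precisely $(MQB_4)$.

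For the remaining axioms I would argue as follows. Axiom $(MQB_5)$, the identity $\exists\forall x=\forall x$, is the first half of Lemma \ref{qinv-50}$(1)$; note that this is exactly the point at which the \emph{strong} synchronization is used, since that lemma rests on Proposition \ref{gm-60}$(1)$. Axiom $(MQB_6)$ splits into $\exists u=u$ and $\forall u=u$, which are $(E_2)$ and $(U_1)$ respectively. Finally, the strictness condition $\exists d=\forall d=d$ is immediate from $(E_1)$ and $(U_2)$. Assembling all of these observations exhibits $(\exists,\forall)$ as a strict monadic operator, and hence yields the inclusion $\mathcal{QSSM}^{\mathcal{W}}(X)\subseteq \mathcal{MOP}^{\mathcal{S}}(X)$.

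The only step demanding genuine care is the bookkeeping rather than any single calculation: one must confirm that the clauses of Definition \ref{qinv-70} align with the previously established identities in the correct orientation. In particular, the two substantive axioms $(MQB_3)$ and $(MQB_4)$ must be matched with clauses $(5)$ and $(4)$ of Proposition \ref{qinv-60}, and not interchanged, and one must keep track that the hypothesis places us in $\mathcal{QSSM}^{\mathcal{W}}(X)$ (strong synchronized, weak quantifiers) exactly so that Lemma \ref{qinv-50} applies. Since all the genuine algebra was carried out in the preceding lemma and proposition, the proof reduces to this verification that the definitions coincide.
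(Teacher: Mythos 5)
Your proposal is correct and follows essentially the same route as the paper: both verify $(MQB_1)$--$(MQB_6)$ and strictness by citing exactly $(E_3)$, $(U_3)$, Proposition \ref{qinv-60}$(5)$,$(4)$, Lemma \ref{qinv-50}$(1)$, $(E_2)$, $(U_1)$, and $(E_1)$, $(U_2)$, in that order. Your additional remark that Theorem \ref{qinv-30} (via Corollary \ref{qinv-30-20}$(2)$) supplies both axiom families simultaneously is a small clarification the paper leaves implicit, but it does not change the argument.
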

\begin{proof}
Axioms $(MQB_1)-(MQB_5)$ follow from $(E_3)$, $(U_3)$, Proposition \ref{qinv-60}$(5)$,$(4)$ and 
Lemma \ref{qinv-50}$(1)$, respectively. Axiom $(MQB_6)$ follows from $(E_2)$ and $(U_1)$. 
Since by $(E_1)$ and $(U_2)$, $(\exists,\forall)$ is strict, we get  
$\mathcal{QSSM}^{\mathcal{W}}(X)\subseteq \mathcal{MOP}^{\mathcal{S}}(X)$.  
\end{proof}

\begin{corollary} \label{qinv-80-10} 
$\mathcal{QSSM}(X)\subseteq \mathcal{QSSM}^{\mathcal{W}}(X)\subseteq \mathcal{MOP}^{\mathcal{S}}(X)
                  \subseteq \mathcal{MOP}(X)$.
\end{corollary}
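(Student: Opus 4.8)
The plan is to verify each of the three inclusions in the chain separately, since the corollary is simply an assembly of facts that are either immediate from the definitions or already established. First I would treat the leftmost inclusion $\mathcal{QSSM}(X)\subseteq \mathcal{QSSM}^{\mathcal{W}}(X)$. Both sets consist of pairs $(\exists,\forall)\in \mathcal{SSM}(X)$, so the underlying pair set is the same; only the strength of the quantifier condition differs. A pair lies in $\mathcal{QSSM}(X)$ when $\exists$ is an existential quantifier (axioms $(E_1)$--$(E_7)$) or $\forall$ is a universal quantifier (axioms $(U_1)$--$(U_7)$), whereas membership in $\mathcal{QSSM}^{\mathcal{W}}(X)$ requires only the weak versions (axioms $(E_1)$--$(E_6)$ or $(U_1)$--$(U_6)$). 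Since the weak axiom list is contained in the full one, any existential quantifier is in particular a weak existential quantifier, and similarly for the universal case; hence the inclusion is immediate.

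The middle inclusion $\mathcal{QSSM}^{\mathcal{W}}(X)\subseteq \mathcal{MOP}^{\mathcal{S}}(X)$ is exactly the content of Theorem \ref{qinv-80}, so here I would simply cite that result. For the rightmost inclusion $\mathcal{MOP}^{\mathcal{S}}(X)\subseteq \mathcal{MOP}(X)$, I would observe that a strict monadic operator is, by Definition \ref{qinv-70-10}, a monadic operator satisfying the additional constraint $\exists d=\forall d=d$; dropping this extra requirement gives membership in $\mathcal{MOP}(X)$, so the inclusion holds by definition. Chaining the three inclusions yields the stated result.

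I do not expect any genuine obstacle here: the corollary is a bookkeeping consequence of the definitions together with the already-proved Theorem \ref{qinv-80}. The only point that merits a moment's care is the handling of the disjunction in the definitions of $\mathcal{QSSM}(X)$ and $\mathcal{QSSM}^{\mathcal{W}}(X)$, namely ensuring that a pair qualifying through its existential component still qualifies through the same component after weakening. This is guaranteed because, for a synchronized pair, Theorem \ref{qinv-30} shows that $\exists$ being an existential quantifier and $\forall$ being a universal quantifier are equivalent conditions, so the two disjuncts coincide and the weakening preserves membership regardless of which clause witnesses it.
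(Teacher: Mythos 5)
Your proof is correct and follows exactly the route the paper intends: the first inclusion is the observation (already noted in the paper right after the definitions) that the weak axiom list $(E_1)$--$(E_6)$, resp.\ $(U_1)$--$(U_6)$, is contained in the full list, the middle inclusion is precisely Theorem \ref{qinv-80}, and the last inclusion is immediate from Definition \ref{qinv-70-10}. Your closing remark invoking Theorem \ref{qinv-30} is harmless but unnecessary, since a pair qualifying through a given disjunct automatically qualifies through that same disjunct after weakening.
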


\begin{example} \label{qinv-90} 
Let $X=\{0,u,a,b,1\}$ be a poset with $\le$ defined by $0\le u\le a\le 1$ and $0\le b\le 1$. 
Define the operation $\ra$ on $X$ by the following table: 
\[
\begin{array}{c|cccccc}
\ra & 0 & u & a & b & 1 \\ \hline
0   & 1 & 1 & 1 & 1 & 1 \\
u   & 0 & u & a & b & 1 \\
a   & 0 & 0 & u & b & 1 \\
b   & b & b & b & 1 & 1 \\
1   & 0 & 0 & 0 & b & 1 
\end{array}
.
\]
Then, $(X,\le,\ra,d,u)$ is a commutative involutive unital quantum B-algebra, with $d=a$ $\rm($\cite{Will1}$\rm)$. 
Moreover, $X$ is a quantum B(pP)-algebra with the operation $\odot$ defined below.
\[
\begin{array}{c|cccccc}
\odot & 0 & u & a & b & 1 \\ \hline
0     & 0 & 0 & 0 & 0 & 0 \\
u     & 0 & u & a & b & 1 \\
a     & 0 & a & 1 & b & 1 \\
b     & 0 & b & b & 0 & b \\
1     & 0 & 1 & 1 & b & 1 
\end{array}
.
\]
Consider the maps $\exists_i, \forall_i:X\longrightarrow X$, $i=1,...,6$, given in the table below:
\[
\begin{array}{c|cccccc}
 x          & 0 & u & a & b & 1 \\ \hline
\exists_1 x & 0 & u & a & b & 1 \\
\forall_1 x & 0 & u & a & b & 1 \\ \hline
\exists_2 x & 0 & u & a & 1 & 1 \\
\forall_2 x & 0 & u & a & 0 & 1 \\ \hline
\exists_3 x & 0 & u & 1 & 1 & 1 \\
\forall_3 x & 0 & u & 0 & 0 & 1 \\ \hline
\exists_4 x & 0 & u & 1 & b & 1 \\
\forall_4 x & 0 & u & 0 & b & 1 \\ \hline
\exists_5 x & 0 & u & 1 & 1 & 1 \\
\forall_5 x & 0 & u & u & 0 & 1 \\ \hline
\exists_6 x & 0 & u & 1 & b & 1 \\
\forall_6 x & 0 & u & u & b & 1 \\ 
\end{array}
.   
\]
One can check that $\mathcal{QSSM}(X)=\{(\exists_1,\forall_1)\}$, 
$\mathcal{QSSM}^{\mathcal{W}}(X)=\mathcal{MOP}^{\mathcal{S}}(X)=\{(\exists_1,\forall_1), (\exists_2,\forall_2)\}$,  
$\mathcal{MOP}(X)=\{(\exists_i,\forall_i)\mid i=1,...,6\}$.
\end{example}

$\vspace*{5mm}$

\section{Quantifiers on weakly involutive integral quantum B-algebras}

In this section, we investigate the relationship between quantifiers on weakly involutive integral quantum B-algebras and  quantifiers on other related algebraic structures, such as pseudo MV-algebras and bounded Wajsberg hoops. 
We prove that the quantifiers on bounded sup-commutative pseudo BCK algebras are also quantifiers on the 
corresponding pseudo MV-algebras. We also give a condition for the universal quantifiers of a sup-commutative BCK-algebra to be universal quantifiers on a bounded Wajsberg hoop. \\ 
According to Proposition \ref{psBE-70}, any integral quantum B-algebra is a pseudo BCK-algebra, so that, 
by Proposition \ref{inv-qb-60}, a weakly involutive pseudo BCK-algebra $(X,\le,\ra,\rs,d,u)$ is in fact an 
involutive pseudo BCK-algebra $(X,\le,\ra,\rs,0,1)$ defined in \cite[Def. 4]{Ior16}. 
It follows that the results proved in the previous sections for weakly involutive quantum B-algebras are also 
valid in the case of involutive pseudo BCK-algebras investigated in \cite{Ior16}. 

\begin{lemma} \label{qinv-iqb-10} Let $X$ be an involutive pseudo BCK-algebra. The quantifiers $\exists$, 
$\forall$ on $X$ are isotone. 
\end{lemma}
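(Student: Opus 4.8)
The plan is to exploit integrality in order to turn each desired inequality into an equality of the form $(\,\cdot\,)\ra(\,\cdot\,)=u$, and then to collapse that equality using the quantifier--implication identities of Proposition \ref{qinv-60} together with $\forall u=u$. Recall first that, by Proposition \ref{psBE-70}, an integral quantum B-algebra has $u$ as greatest element, so for all $a,b\in X$ one has $a\le b$ iff $a\ra b=u$ iff $a\rs b=u$. Since $X$ is a weakly involutive integral quantum B-algebra, the pair $(\exists,\forall)$ is a strong synchronized pair of (weak) quantifiers in the sense of the previous section -- equivalently, a monadic operator by Theorem \ref{qinv-80} -- so the identities of Proposition \ref{qinv-60} are available.

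For the existential quantifier, suppose $x\le y$. By the increasing property $(E_3)$ we have $y\le\exists y$, whence $x\le\exists y$ and therefore $x\ra\exists y=u$. Now Proposition \ref{qinv-60}$(5)$ gives $\exists x\ra\exists y=\forall(x\ra\exists y)=\forall u=u$, the last equality by $(U_1)$; hence $\exists x\le\exists y$. The $\rs$ version is identical, using $x\rs\exists y=u$ and the second identity of Proposition \ref{qinv-60}$(5)$.

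For the universal quantifier, suppose $x\le y$. By the decreasing property $(U_3)$ we have $\forall x\le x\le y$, so $\forall x\ra y=u$. Applying Proposition \ref{qinv-60}$(6)$ we obtain $\forall x\ra\forall y=\forall(\forall x\ra y)=\forall u=u$, again by $(U_1)$; hence $\forall x\le\forall y$. The $\rs$ case is symmetric.

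There is no serious obstacle here: once the greatest element $u$ is used to encode $\le$ as the equations $a\ra b=u$ and $a\rs b=u$, the whole argument reduces to a single application of the appropriate quantifier--implication identity followed by $\forall u=u$. The only point that must be checked carefully is that the hypotheses genuinely place $(\exists,\forall)$ in the class to which Proposition \ref{qinv-60} applies; if one prefers to bypass that machinery, the same two computations can be carried out directly from the monadic-operator axioms $(MQB_1)$--$(MQB_6)$ of Definition \ref{qinv-70}, replacing the use of Proposition \ref{qinv-60}$(6)$ by $(MQB_4)$ together with $(MQB_5)$ (writing $\forall x=\exists\forall x$ before applying $(MQB_4)$).
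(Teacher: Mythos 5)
Your proof is correct and follows essentially the same route as the paper: from $x\le y$ you derive $\forall x\le y$ and $x\le\exists y$, encode these as $\forall x\ra y=u$ and $x\ra\exists y=u$ via the greatest element, and then apply Proposition \ref{qinv-60}$(6)$ and $(5)$ together with $\forall u=u$ to conclude $\forall x\ra\forall y=u$ and $\exists x\ra\exists y=u$; this is exactly the paper's argument (written there with $1$ in place of $u$ and invoking $(psBCI_5)$). Your closing remark about verifying that $(\exists,\forall)$ lies in the class where Proposition \ref{qinv-60} applies is a fair caveat, and the same implicit assumption is made in the paper's own proof.
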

\begin{proof}
If $x, y\in X$ such that $x\le y$, then $\forall x\le x\le y$ and $x\le y\le \exists y$. 
Applying Proposition  \ref{qinv-60}$(6)$,$(5)$ and $(BCI_5)$ we get: 
$1=\forall 1=\forall(\forall x\ra y)=\forall x\ra \forall y$ and 
$1=\forall 1=\forall(x\ra \exists y)=\exists x\ra \exists y$, respectively. 
It follows that $\forall x\le \forall y$ and $\exists x\le \exists y$, hence the quantifiers are isotone. 
\end{proof}

\begin{proposition} \label{qinv-60-20} Let $X$ be an involutive pseudo BCK-algebra and let 
$(\exists,\forall)\in \mathcal{QSSM}^{\mathcal{W}}(X)$. Then the following hold, for all $x, y\in X:$ \\
$(1)$ $\forall(x\ra y)\rs (\forall x\ra \forall y)=1$, $\forall(x\rs y)\ra (\forall x\rs \forall y)=1;$ \\ 
$(2)$ $\forall((x\ra \forall y)\rs \forall x)\le \forall((x\ra \forall y)\rs x)$, 
      $\forall((x\rs \forall y)\ra \forall x)\le \forall((x\rs \forall y)\ra x)$.  
\end{proposition}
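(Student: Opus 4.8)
The plan is to reduce both parts to the underlying order relation, exploiting the pseudo BCK characterization $(psBCI_5)$, namely $a\le b$ iff $a\ra b=1$ iff $a\rs b=1$, together with the isotonicity of the quantifiers from Lemma \ref{qinv-iqb-10} and the decreasing property $(U_3)$, $\forall x\le x$. The whole argument is a combination of these three ingredients, so I would not expect a genuine obstacle; the only step needing care is the rewriting in part $(1)$.

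For part $(1)$, I would first note that $\forall x\le x$ gives, by Proposition \ref{psBE-20}$(3)$, the inequality $x\ra y\le \forall x\ra y$. Applying the isotone map $\forall$ and then Proposition \ref{qinv-60}$(6)$, which states $\forall(\forall x\ra y)=\forall x\ra \forall y$, I obtain
\[
\forall(x\ra y)\le \forall(\forall x\ra y)=\forall x\ra \forall y .
\]
By $(psBCI_5)$ this is exactly $\forall(x\ra y)\rs(\forall x\ra \forall y)=1$. The second identity follows symmetrically, using the $\rs$-part of Proposition \ref{psBE-20}$(3)$ together with the companion formula $\forall(\forall x\rs y)=\forall x\rs \forall y$ of Proposition \ref{qinv-60}$(6)$, and again $(psBCI_5)$ to pass from $\forall(x\rs y)\le \forall x\rs \forall y$ to the stated equality with $1$.

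For part $(2)$, the key is once more $\forall x\le x$. Fixing the first argument $x\ra \forall y$ and using the monotonicity of $\rs$ in its second argument (Proposition \ref{psBE-20}$(2)$), I get $(x\ra \forall y)\rs \forall x\le (x\ra \forall y)\rs x$; applying the isotone map $\forall$ yields the first inequality. The second inequality follows in the same way, now fixing $x\rs \forall y$ and invoking the monotonicity of $\ra$ in its second argument, which is precisely axiom $(QB_3)$, followed by isotonicity of $\forall$.

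As anticipated, no step presents a real difficulty. The single point requiring minor attention is in part $(1)$: one must first rewrite $\forall x\ra \forall y$ as $\forall(\forall x\ra y)$ via Proposition \ref{qinv-60}$(6)$, so that the isotonicity of $\forall$ can be applied to the comparison $x\ra y\le \forall x\ra y$ rather than to $\forall x\ra \forall y$ directly.
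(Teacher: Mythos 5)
Your proposal is correct, and part $(2)$ coincides with the paper's own argument: $\forall x\le x$, monotonicity of $\rs$ (resp.\ $\ra$) in the second argument, then isotonicity of $\forall$. For part $(1)$, however, you take a genuinely different (and shorter) route than the paper. The paper first establishes $\forall(x\ra y)\le x\ra y\le \forall x\ra y$, rewrites this as the equation $\forall(x\ra y)\rs(\forall x\ra y)=1$, applies $\forall$ to both sides ($1=\forall 1=\forall(\cdots)$), and then distributes $\forall$ through the expression using Lemma \ref{qinv-50} (to replace $\forall(x\ra y)$ by $\exists\forall(x\ra y)$ and $\forall x$ by $\exists\forall x$) together with Proposition \ref{qinv-60}$(4)$, which handles terms of the form $\forall(\exists a\rs b)$ and $\forall(\exists a\ra b)$. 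You instead start from $x\ra y\le\forall x\ra y$, apply the isotone map $\forall$ (Lemma \ref{qinv-iqb-10}), and collapse $\forall(\forall x\ra y)$ to $\forall x\ra\forall y$ via Proposition \ref{qinv-60}$(6)$, finishing with $(psBCI_5)$. Both are legitimate since Lemma \ref{qinv-iqb-10} and Proposition \ref{qinv-60} precede this result and do not depend on it; your version buys brevity by delegating the ``apply $\forall$ to an equation equal to $1$'' trick to the already-proved isotonicity lemma (whose proof uses exactly that trick), whereas the paper re-runs that mechanism inline and thereby avoids invoking isotonicity in part $(1)$ at all.
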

\begin{proof} 
$(1)$ From $\forall x\le x$ we have $\forall(x\ra y)\le x\ra y\le \forall x\ra y$, that is, 
$\forall(x\ra y)\rs (\forall x\ra y)=1$. Applying Lemma \ref{qinv-50} and Proposition \ref{qinv-60}$(4)$ we get \\
$\hspace*{2cm}$ $1=\forall 1=\forall(\forall(x\ra y)\rs (\forall x\ra y))=
                   \forall(\exists\forall(x\ra y)\rs (\forall x\ra y))$ \\
$\hspace*{2.3cm}$ $=\exists\forall(x\ra y)\rs \forall(\forall x\ra y)=
                \forall(x\ra y)\rs \forall(\exists\forall x\ra y)$ \\
$\hspace*{2.3cm}$ $=\forall(x\ra y)\rs (\exists\forall x\ra \forall y)=\forall(x\ra y)\rs (\forall x\ra \forall y)$. \\
Similarly, $\forall(x\rs y)\ra (\forall x\rs \forall y)=1$. \\
$(2)$ From $\forall x\le x$ we get $(x\ra \forall y)\rs \forall x\le (x\ra \forall y)\rs x$, so that 
$\forall((x\ra \forall y)\rs \forall x)\le \forall((x\ra \forall y)\rs x)$. 
Applying Proposition \ref{qinv-60}$(7)$,$(2)$ we have: \\ 
$\hspace*{2cm}$ $\forall((x\ra \forall y)\rs \forall x)=\exists(x\ra \forall y)\rs \forall x
                    =(\forall x\ra \forall y)\rs \forall x$. \\
It follows that $(\forall x\ra \forall y)\rs \forall x\le \forall((x\ra \forall y)\rs x)$. 
Similarly, $\forall((x\rs \forall y)\ra \forall x)\le \forall((x\rs \forall y)\ra x)$. 
\end{proof}

A pseudo BCK-algebra $(X,\le,\ra,\rs,1)$ is said to be \emph{sup-commutative} (\cite{Geo15}) if it satisfies the following conditions, for all $x, y\in X:$ \\
$(comm_1)$ $(x\ra y)\rs y=(y\ra x)\rs x$, \\
$(comm_2)$ $(x\rs y)\ra y=(y\rs x)\ra x$. \\
If $X$ is sup-commutative, then $(x\ra y)\rs y=(x\rs y)\ra y$ (\cite[Cor. 1.2]{Ciu2}). 

\begin{remark} \label{qinv-iqb-10-10} 
The bounded sup-commutative pseudo BCK-algebras form a proper subclass of involutive pseudo BCK-algebras. \\
Indeed, if $(X,\le,\ra,\rs,0,1)$ is a bounded sup-commutative pseudo BCK-algebra, taking $y:=0$ in $(comm_1)$ and $(comm_2)$ we get $x^{-\sim}=x^{\sim-}=x$, so that $X$ is an involutive pseudo BCK-algebra. 
The converse is not always true, as we can see in the following example. 
Consider the pseudo-BCK algebra $(X,\le,\ra,\rs,0,1)$ where the operations $\ra$ and $\rs$ on $X=\{0,a,b,c,d,1\}$ 
are defined as follows:
\[
\begin{array}{c|cccccc}
\ra & 0 & a & b & c & d & 1 \\ \hline
0 & 1 & 1 & 1 & 1 & 1 & 1 \\
a & d & 1 & d & 1 & 1 & 1 \\
b & c & c & 1 & 1 & 1 & 1 \\
c & a & a & d & 1 & d & 1 \\
d & b & c & b & c & 1 & 1 \\
1 & 0 & a & b & c & d & 1
\end{array}
\hspace{10mm}
\begin{array}{c|cccccc}
\rs & 0 & a & b & c & d & 1 \\ \hline
0 & 1 & 1 & 1 & 1 & 1 & 1 \\
a & c & 1 & c & 1 & 1 & 1 \\
b & d & d & 1 & 1 & 1 & 1 \\
c & b & d & b & 1 & d & 1 \\
d & a & a & c & c & 1 & 1 \\
1 & 0 & a & b & c & d & 1
\end{array}
\qquad\quad
\begin{picture}(50,-70)(0,30)
\drawline(0,25)(25,5)(50,25)(0,50)(0,25)(50,50)(50,25)
\drawline(0,50)(25,70)(50,50)
\put(0,25){\circle*{4}}
\put(25,5){\circle*{4}}
\put(50,25){\circle*{4}}
\put(0,50){\circle*{4}}
\put(50,50){\circle*{4}}
\put(25,70){\circle*{4}}
\put(30,-3){$0$}
\put(30,70){$1$}
\put(-11,20){$a$}
\put(-11,50){$c$}
\put(55,20){$b$}
\put(55,50){$d$}
\end{picture}
\]
Then $X$ is involutive, but not sup-commutative (\cite{Ciu30}) (for example, $(a\ra b)\rs b=c\neq d=((b\ra a)\rs a$). 
\end{remark}

If $X$ is a bounded sup-commutative pseudo BCK-algebra, then $(X,\le)$ is a lattice with $\vee$ and $\wedge$ 
defined by $x\vee y=(x\ra y)\rs y=(x\rs y)\ra y$, $x\wedge y=(x^{-}\vee y^{-})^{\sim}=(x^{\sim}\vee y^{\sim})^{-}$, 
for all $x, y\in A$. 

\begin{lemma} \label{qinv-iqb-20} Let $X$ be abounded sub-commutative pseudo BCK-algebra. Then 
$\forall(x\wedge y)=\forall x \wedge \forall y$ and $\exists(x\vee y)=\exists x\vee \exists y$, 
for all $x, y\in X$. 
\end{lemma}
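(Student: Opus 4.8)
The plan is to establish the join identity $\exists(x\vee y)=\exists x\vee \exists y$ first, and then obtain the meet identity $\forall(x\wedge y)=\forall x\wedge \forall y$ from it by the involutive De Morgan duality. For the join I would prove the two inequalities separately. Since $x,y\le x\vee y$ and $\exists$ is isotone (Lemma \ref{qinv-iqb-10}), we have $\exists x,\exists y\le \exists(x\vee y)$, and because $\exists x\vee \exists y$ is the least upper bound of $\exists x$ and $\exists y$, this gives $\exists x\vee \exists y\le \exists(x\vee y)$. For the reverse inequality I would use $x\le \exists x$ and $y\le \exists y$ (axiom $(E_3)$) together with isotonicity to obtain $\exists(x\vee y)\le \exists(\exists x\vee \exists y)$, so that it suffices to show $\exists$ fixes the element $\exists x\vee \exists y$.

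The crux is therefore the fixed-point identity $\exists(\exists x\vee \exists y)=\exists x\vee \exists y$. Writing the join in the sup-commutative form $\exists x\vee \exists y=(\exists x\ra \exists y)\rs \exists y$, I would first note by Proposition \ref{qinv-60-10}$(2)$ that $a:=\exists x\ra \exists y$ satisfies $\exists a=a$, and that $\exists y$ is $\exists$-fixed by Lemma \ref{qinv-40}$(3)$. Then Proposition \ref{qinv-60}$(3)$, applied to the $\exists$-fixed element $a$ (so that $a=\exists a$) and to the argument $\exists y$, yields $\exists(a\rs \exists y)=a\rs \exists(\exists y)=a\rs \exists y$, using $\exists\exists y=\exists y$. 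This is exactly $\exists(\exists x\vee \exists y)=\exists x\vee \exists y$, which closes the reverse inequality and hence the join identity.

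For the meet identity I would exploit the definition $x\wedge y=(x^{\sim}\vee y^{\sim})^{-}$ and push $\forall$ through the involution. By Lemma \ref{qinv-40}$(2)$ we have $\forall z^{-}=(\exists z)^{-}$, so with $z=x^{\sim}\vee y^{\sim}$ this gives $\forall(x\wedge y)=(\exists(x^{\sim}\vee y^{\sim}))^{-}$. Applying the already-proved join identity turns this into $(\exists x^{\sim}\vee \exists y^{\sim})^{-}$, and then Lemma \ref{qinv-40}$(1)$, namely $\exists x^{\sim}=(\forall x)^{\sim}$, converts it into $((\forall x)^{\sim}\vee (\forall y)^{\sim})^{-}=\forall x\wedge \forall y$ by the definition of $\wedge$. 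Thus the meet identity reduces entirely to the join identity plus the synchronization relations between $\exists$, $\forall$ and the two negations.

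I expect the main obstacle to be the fixed-point step $\exists(\exists x\vee \exists y)=\exists x\vee \exists y$: one must correctly chain the behaviour of $\exists$ under $\rs$ (Proposition \ref{qinv-60}$(3)$) with the idempotence $\exists\exists=\exists$, taking care that the outer factor $\exists x\ra \exists y$ is genuinely $\exists$-fixed so that Proposition \ref{qinv-60}$(3)$ applies with it playing the role of $\exists x$. Everything after that — the De Morgan passage for the meet and the two easy inequalities for the join — is routine bookkeeping with the involution identities of Lemma \ref{qinv-40}.
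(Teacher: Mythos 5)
Your proof is correct, but it is genuinely different from what the paper does: the paper gives no argument at all for this lemma, merely citing the analogous Propositions 5.6 and 5.7 of \cite{Ciu35} (the monadic pseudo BE-algebra paper). Your derivation, by contrast, is self-contained within the machinery of Section 5: the join law via the two inequalities (isotonicity from Lemma \ref{qinv-iqb-10} for one direction, and $(E_3)$ plus the fixed-point identity $\exists(\exists x\vee \exists y)=\exists x\vee \exists y$ for the other), where the fixed-point step correctly chains Proposition \ref{qinv-60-10}$(2)$, Proposition \ref{qinv-60}$(3)$ and the idempotence $\exists\exists=\exists$ of Lemma \ref{qinv-40}$(3)$; and then the meet law by De Morgan duality through Lemma \ref{qinv-40}$(1)$,$(2)$ and the definition $x\wedge y=(x^{\sim}\vee y^{\sim})^{-}$. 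This buys the reader a verifiable proof instead of an external pointer. One caveat you should make explicit: your argument (like the paper's surrounding results, including the proof of Lemma \ref{qinv-iqb-10} itself) tacitly assumes that $\exists$ and $\forall$ form a strong synchronized pair, i.e.\ $(\exists,\forall)\in \mathcal{QSSM}^{\mathcal{W}}(X)$, since Lemma \ref{qinv-40} and Propositions \ref{qinv-60}, \ref{qinv-60-10} are only available under that hypothesis; the lemma as stated does not name this assumption, so it is worth stating it when you invoke those results.
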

\begin{proof} Similarly to \cite[Prop. 5.6,5.7]{Ciu35}. 
\end{proof}

The non-commutative generalizations of MV-algebras called pseudo MV-algebras were introduced by G. Georgescu 
and A. Iorgulescu (\cite{Geo2}) and independently by J. Rach{\accent23u}nek (\cite{Rac2}) under the name of 
generalized MV-algebras. 
A pseudo MV-algebra is a structure $(X, \oplus, \odot, ^{-}, ^{\sim}, 0, 1)$ of type $(2,2,1,1,0,0)$ such 
that the following axioms hold for all $x, y, z \in X:$ 
$(psMV_1)$ $x\oplus (y \oplus z) = (x \oplus y) \oplus z;$ 
$(psMV_2)$ $x \oplus 0 = 0 \oplus x = x;$ 
$(psMV_3)$ $x \oplus 1 = 1 \oplus x =1;$ 
$(psMV_4)$ $1^{-}=0$, $1^{\sim}=0;$ 
$(psMV_5)$ $(x^{-} \oplus y^{-})^{\sim}=(x^{\sim} \oplus y^{\sim})^{-};$ 
$(psMV_6)$ $x \oplus (x^{\sim} \odot y)=y \oplus (y^{\sim} \odot x) = (x \odot y^{-}) \oplus y =
            (y \odot x^{-}) \oplus x;$ 
$(psMV_7)$ $x \odot (x^{-} \oplus y) = (x \oplus y^{\sim}) \odot y;$ 
$(psMV_8)$ $(x^{-})^{\sim}=x,$ 
where $x \odot y:= (y^{-} \oplus x^{-})^{\sim}$. \\
We define $x \leq y$ iff $x^{-}\oplus y=1$ and ``$\leq$" defines an order relation on $X$. 
Moreover, $(X,\le)$ is a distributive lattice with the lattice operations defined as below: \\
$\hspace*{2cm}$ $x \vee y:= x \oplus x^{\sim} \odot y=y \oplus y^{\sim} \odot x = 
                 x \odot y^{-} \oplus y = y \odot x^{-} \oplus x,$ \\
$\hspace*{2cm}$ $x \wedge y:= x \odot (x^{-} \oplus y) = y \odot (y^{-} \oplus x)=
                (x \oplus y^{\sim}) \odot y =(y \oplus x^{\sim}) \odot x.$ \\
It was proved in \cite[Th. 3.7]{Ior1} that the bounded sup-commutative pseudo BCK-algebras coincide (are categorically isomorphic) with pseudo MV-algebras. 
Namely, given a bounded sup-commutative pseudo BCK-algebra $(X,\le,\ra,\rs,0,1)$, then the structure 
$(X,\odot,\oplus,^{-},^{\sim},0,1)$ defined by $x^{-}=x\ra 0$, $x^{\sim}=x\rs 0$, 
$x\odot y=(x\ra y^{-})^{\sim}=(y\rs x^{\sim})^{-}$, $x\oplus y=(y^{-}\odot x^{-})^{\sim}=(y^{\sim}\odot x^{\sim})^{-}$, is a pseudo MV-algebra. Conversely, given a pseudo MV-algebra $(X,\odot,\oplus,^{-},^{\sim},0,1)$, then the structure $(X,\le,\ra,\rs,0,1)$ defined by 
$x\ra y=y\oplus x^{-}=(x\odot y^{\sim})^{-}$, $x\rs y=x^{\sim}\oplus y=(y^{-}\odot x)^{\sim}$, $x \leq y$ iff $x^{-}\oplus y=1$ is a bounded sup-commutative pseudo BCK-algebra. \\
The universal and existential quantifiers on a pseudo MV-algebra $X$ were defined and investigated in \cite{Rac1} 
as follows. 
A universal quantifier on $X$ is a mapping $\forall:X\longrightarrow X$ satisfying the following conditions, 
for all $x, y\in X:$ 
$(MVU_1)$ $\forall x\le x;$ 
$(MVU_2)$ $\forall(x\wedge y)=\forall x\wedge \forall y;$ 
$(MVU_3)$ $\forall((\forall x)^{-})=(\forall x)^{-}$, $\forall((\forall x)^{\sim})=(\forall x)^{\sim};$ 
$(MVU_4)$ $\forall(\forall x\odot \forall y)=\forall x\odot \forall y;$ 
$(MVU_5)$ $\forall(x \odot x)=\forall x\odot \forall x;$ 
$(MVU_6)$ $\forall(x \oplus x)=\forall x\oplus \forall x$. 
An existential quantifier on $X$ is a mapping $\exists:X\longrightarrow X$ satisfying the following conditions, 
for all $x, y\in X:$ 
$(MVE_1)$ $x\le \exists x;$ 
$(MVE_2)$ $\exists(x\vee y)=\exists x\vee \exists y;$ 
$(MVE_3)$ $\exists((\exists x)^{-})=(\exists x)^{-}$, $\exists((\exists x)^{\sim})=(\exists x)^{\sim};$ 
$(MVE_4)$ $\exists(\exists x\odot \exists y)=\exists x\odot \exists y;$ 
$(MVE_5)$ $\exists(x \odot x)=\exists x\odot \exists x;$ 
$(MVE_6)$ $\exists(x \oplus x)=\exists x\oplus \exists x$. 

\begin{proposition} \label{qinv-iqb-30} Let $X$ be a bounded sup-commutative pseudo BCK-algebra. 
The existential quantifier from Definition \ref{qinv-10} is an existential quantifier on the coresponding 
pseudo MV-algebra. 
\end{proposition}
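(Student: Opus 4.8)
The plan is to verify directly that $\exists$ satisfies the six axioms $(MVE_1)$--$(MVE_6)$ of an existential quantifier on the pseudo MV-algebra associated with $X$. First I would fix the setting: since $X$ is a bounded sup-commutative pseudo BCK-algebra, it is an involutive (hence weakly involutive) unital quantum B-algebra with $d=0$ and $u=1$ (Remark \ref{qinv-iqb-10-10} together with Proposition \ref{inv-qb-60}), and by Theorem \ref{inv-qb-90} it carries the pseudo-product $\odot$. Crucially, the operations $\odot$, $\oplus$, the negations $x^{-},x^{\sim}$, the order $\le$ and the lattice operations $\vee,\wedge$ appearing in the axioms $(MVE_i)$ are exactly the ones induced on $X$ by $\ra,\rs,d$; so it suffices to read each $(MVE_i)$ inside $X$ and match it against the axioms $(E_1)$--$(E_7)$ of Definition \ref{qinv-10}.

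The routine matches are: $(MVE_1)$ is literally $(E_3)$; $(MVE_5)$ is $(E_7)$; $(MVE_6)$ is $(E_5)$; and $(MVE_2)$, i.e. $\exists(x\vee y)=\exists x\vee \exists y$, is Lemma \ref{qinv-iqb-20}. For $(MVE_4)$ I would start from the second identity in $(E_6)$, namely $\exists(\exists x\odot y)=\exists x\odot \exists y$, replace $y$ by $\exists y$, and use idempotency $\exists\exists y=\exists y$ (Lemma \ref{qinv-40}$(3)$) to get $\exists(\exists x\odot \exists y)=\exists x\odot \exists\exists y=\exists x\odot \exists y$.

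The one step that is not a mere substitution is $(MVE_3)$, $\exists((\exists x)^{-})=(\exists x)^{-}$ and $\exists((\exists x)^{\sim})=(\exists x)^{\sim}$, and this is where integrality does the work. Write $a=\exists x$, so $\exists a=a$ by idempotency. Since $X$ is integral, $d=0$ is the bottom element, so the inequalities $a^{-}\odot a\le d$ and $a\odot a^{\sim}\le d$ of Proposition \ref{inv-qb-100}$(3)$ become the equalities $a^{-}\odot a=a\odot a^{\sim}=0$. Applying $(E_6)$ in the form $\exists(w\odot \exists x)=\exists w\odot \exists x$ with $w=a^{-}$, together with $(E_1)$ (so that $\exists 0=\exists d=d=0$), I obtain $\exists(a^{-})\odot a=\exists(a^{-}\odot a)=\exists 0=0$; similarly, using $\exists(\exists x\odot y)=\exists x\odot \exists y$ with $y=a^{\sim}$ gives $a\odot \exists(a^{\sim})=\exists(a\odot a^{\sim})=0$. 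Now Proposition \ref{inv-qb-100}$(1)$ (which says $z\le w^{-}$ iff $z\odot w\le d$, and $z\le w^{\sim}$ iff $w\odot z\le d$) turns these into $\exists(a^{-})\le a^{-}$ and $\exists(a^{\sim})\le a^{\sim}$. Combined with the reverse inequalities $a^{-}\le \exists(a^{-})$ and $a^{\sim}\le \exists(a^{\sim})$ coming from $(E_3)$, this yields $(MVE_3)$.

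I expect $(MVE_3)$ to be the only genuine obstacle: it amounts to showing that the set of $\exists$-closed elements is stable under both negations, a fact that in the general weakly involutive case is obtained only via strong synchronization (as in Lemma \ref{qinv-50}$(2)$), but which here is forced for free because integrality collapses the bounds ``$\le d$'' of Proposition \ref{inv-qb-100}$(3)$ into genuine equalities ``$=0$''. With $(MVE_1)$--$(MVE_6)$ all established, $\exists$ is an existential quantifier on the corresponding pseudo MV-algebra.
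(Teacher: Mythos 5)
Your proof is correct, and it diverges from the paper's on exactly the one axiom where something nontrivial happens. For $(MVE_1)$, $(MVE_2)$, $(MVE_5)$, $(MVE_6)$ you do what the paper does (match against $(E_3)$, Lemma \ref{qinv-iqb-20}, $(E_7)$, $(E_5)$), and your treatment of $(MVE_4)$ is the paper's ``is in fact $(E_6)$'' made precise by inserting idempotency. The genuine difference is $(MVE_3)$: the paper dispatches it by citing Lemma \ref{qinv-50}$(2)$, whose hypothesis is $(\exists,\forall)\in \mathcal{QSSM}^{\mathcal{W}}(X)$, i.e.\ it presupposes a \emph{strong} synchronized partner $\forall$; the proposition's hypothesis only supplies a standalone existential quantifier, which by Lemma \ref{gm-30} and Theorem \ref{qinv-30} has a synchronized partner, but strong synchronization ($\exists\forall=\forall$) is an extra condition (Definition \ref{gm-60-30}). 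Your argument replaces that citation with a direct computation: writing $a=\exists x$, integrality forces $a^{-}\odot a=a\odot a^{\sim}=0=d$ (Proposition \ref{inv-qb-100}$(3)$ together with Proposition \ref{inv-qb-60}), then $(E_6)$, $(E_1)$ and idempotency give $\exists(a^{-})\odot a=a\odot\exists(a^{\sim})=0$, and Proposition \ref{inv-qb-100}$(1)$ plus $(E_3)$ close both inequalities. This is more elementary and self-contained, and it buys something the paper does not make explicit: applying your $(MVE_3)$ to $x^{-}$ yields $\exists\forall x=\exists((\exists x^{-})^{\sim})=(\exists x^{-})^{\sim}=\forall x$, so in the bounded sup-commutative (integral) case every quantifier pair is automatically strong synchronized --- which is precisely what is needed to legitimize the paper's appeal to Lemma \ref{qinv-50}$(2)$. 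Two cosmetic remarks: your phrase ``involutive (hence weakly involutive)'' should be read in Iorgulescu's sense, since in the paper's quantum B-algebra terminology involutivity also requires cyclicity, which sup-commutativity does not provide (and you only use weak involutivity anyway); and Lemma \ref{qinv-40}$(3)$ is formally stated for pairs in $\mathcal{QSM}^{\mathcal{W}}(X)$, but invoking it for a standalone $\exists$ is harmless, both because every good map has a synchronized partner and because its proof uses only $(E_4^{\prime})$ and $(E_1)$.
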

\begin{proof}
Axiom $(MVE_1)$ is $(E_3)$, while axiom $(MVE_2)$ follows by Lemma \ref{qinv-iqb-20}, and axiom $(MVE_3)$ 
follows by Lemma \ref{qinv-50}$(2)$. 
Since axioms $(MVE_4)$, $(MVE_5)$ and $(MVE_6)$ are in fact $(E_6)$, $(E_7)$ and $(E_5)$, we conclude that 
$\exists$ is a universal quantifier on the pseudo MV-algebra $X$. 
\end{proof}

\begin{proposition} \label{qinv-iqb-40} Let $X$ be a bounded sup-commutative pseudo BCK-algebra. 
The universal quantifier $\forall$ from Definition \ref{qinv-20} is a universal quantifier on the coresponding 
pseudo MV-algebra. 
\end{proposition}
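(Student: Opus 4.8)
The plan is to verify directly that $\forall$ satisfies each of the six defining axioms $(MVU_1)$--$(MVU_6)$ of a universal quantifier on the pseudo MV-algebra $(X,\odot,\oplus,{}^{-},{}^{\sim},0,1)$ corresponding to $X$, mirroring the argument used for the existential case in Proposition \ref{qinv-iqb-30}. By Proposition \ref{inv-qb-60} the distinguished element $d$ is the bottom element $0$ in this integral setting, so the unary operations $x^{-}=x\ra 0$ and $x^{\sim}=x\rs 0$ of the pseudo MV-algebra are literally the maps ${}^{-},{}^{\sim}$ used throughout the paper; moreover, by the categorical isomorphism of \cite[Th. 3.7]{Ior1}, the binary operations $\odot$, $\oplus$ and the lattice operations $\wedge$, $\vee$ of the pseudo MV-algebra coincide with those already carried by the weakly involutive quantum B-algebra $X$. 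Hence no translation of operations is required and the task reduces to matching axioms.

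First I would pass to the strong synchronized pair $(\exists,\forall)\in\mathcal{QSSM}^{\mathcal{W}}(X)$, with $\exists$ the synchronized partner of $\forall$ furnished by Lemma \ref{gm-30} (an existential quantifier by Theorem \ref{qinv-30}), exactly as in the setting of Proposition \ref{qinv-iqb-30}. The axioms then follow one by one: $(MVU_1)$ is precisely $(U_3)$; $(MVU_2)$, the identity $\forall(x\wedge y)=\forall x\wedge\forall y$, is Lemma \ref{qinv-iqb-20}; $(MVU_3)$, namely $\forall((\forall x)^{-})=(\forall x)^{-}$ and $\forall((\forall x)^{\sim})=(\forall x)^{\sim}$, is Lemma \ref{qinv-50}$(3)$; $(MVU_4)$, $\forall(\forall x\odot\forall y)=\forall x\odot\forall y$, is Proposition \ref{qinv-60-10}$(3)$; $(MVU_5)$ is $(U_5)$; and $(MVU_6)$ is $(U_7)$. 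Collecting these six verifications shows that $\forall$ is a universal quantifier on the corresponding pseudo MV-algebra.

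The main obstacle is not any individual computation, each of which has already been done in the earlier sections, but rather the justification that $\forall$ together with its synchronized partner genuinely forms a strong synchronized pair, since $(MVU_3)$ rests on the identity $\forall\exists=\exists$ from Lemma \ref{qinv-50}$(1)$, which is available only under strong synchronization. I would handle this by invoking the same standing hypothesis under which Proposition \ref{qinv-iqb-30} was established, namely membership in $\mathcal{QSSM}^{\mathcal{W}}(X)$, and by recording explicitly that the identification of the pseudo MV-operations with the quantum B-operations via \cite[Th. 3.7]{Ior1} is what legitimises reading each $(MVU_i)$ off the corresponding quantum B-algebra identity. Once these two points are fixed the argument is a direct transcription of the existential case.
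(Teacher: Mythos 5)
Your proposal is correct and follows essentially the same route as the paper: match $(MVU_1)$ to $(U_3)$, $(MVU_2)$ to Lemma \ref{qinv-iqb-20}, $(MVU_3)$ to Lemma \ref{qinv-50}$(3)$, and $(MVU_4)$--$(MVU_6)$ to $(U_4)$ (via Proposition \ref{qinv-60-10}$(3)$), $(U_5)$ and $(U_7)$. Your explicit citation of Proposition \ref{qinv-60-10}$(3)$ for $(MVU_4)$ and your flagging of the strong-synchronization hypothesis needed for Lemma \ref{qinv-50}$(3)$ are slightly more careful than the paper's wording, but the argument is the same.
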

\begin{proof}
Axiom $(MVU_1)$ is $(U_3)$, while axiom $(MVU_2)$ follows by Lemma \ref{qinv-iqb-20}, and axiom $(MVU_3)$ 
follows by Lemma \ref{qinv-50}$(3)$. 
Since axioms $(MVU_4)$, $(MVU_5)$ and $(MVU_6)$ are in fact $(U_4)$, $(U_5)$ and $(U_7)$, we conclude that 
$\forall$ is a universal quantifier on the pseudo MV-algebra $X$. 
\end{proof}

It follows that the results of this paper can be also applied to pseudo MV-algebras. 

A \emph{hoop} is an algebra $(H,\odot,\ra,1)$ of the type $(2,2,0)$ such that $(H,\odot,1)$ is a 
commutative monoid satisfying the following axioms, for all $x, y, z \in H:$ 
$(H_1)$ $x\ra x=1;$ 
$(H_2)$ $x\odot (x\ra y)=y\odot (y\ra x);$ 
$(H_3)$ $(x\odot y) \ra z=x\ra (y\ra z)$. 
A hoop $H$ satisfying the axiom $(WH)$ $(x\ra y)\ra y=(y\ra x)\ra x$, for all $x, y\in H$ is called a 
\emph{Wajsberg hoop}. Obviously, a bounded Wajsberg hoop is involutive. 
By \cite[Prop. 5.4]{Ciu14}, every hoop $(H,\odot,\ra,1$ is a BCK(P)-algebra, and so, every bounded Wajsberg hoop 
is an involutive BCK-algebra. 
As a consequence, the results proved for involutive pseudo BCK-algebras are also valid for the case of 
bounded Wajsberg hoops. 
Given a hoop $(H,\odot,\ra,1)$, the operation $\oplus$ was defined in \cite{Cim1} as follows: \\
$(WS)$ $x\oplus y=(x\ra x\odot y)\ra y$, for all $x, y\in H$. 

\begin{lemma} \label{qinv-iqb-50} If $(H,\odot,\ra,0,1)$ is a bounded Wajsberg hoop, then the definitions $(S)$ and 
$(WS)$ of $\oplus$ are equivalent, that is  
$x\oplus y=(x\ra x\odot y)\ra y=x^{-}\ra y=y^{-}\ra x$, for all $x, y\in H$.
\end{lemma}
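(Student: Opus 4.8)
\emph{Setup and the easy equality.} The plan is to reduce the statement to two standard hoop facts after rewriting both candidate formulas by contraposition. First I would note that a bounded Wajsberg hoop is commutative (its multiplication is a commutative monoid operation), so $\rs$ and $\ra$ coincide and $x^{-}=x^{\sim}=x\ra 0$; moreover it is involutive, so $x^{--}=x$. Hence $(S)$ specialises to $x\oplus y=y^{-}\ra x=x^{-}\ra y$, and the equality of these two forms is immediate from the contraposition law $a\ra b=b^{-}\ra a^{-}$ (Proposition \ref{inv-qb-40}$(3)$, commutative form): taking $a=x^{-}$, $b=y$ and using $x^{--}=x$ gives $x^{-}\ra y=y^{-}\ra x$. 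It therefore suffices to establish the single identity $(x\ra x\odot y)\ra y=y^{-}\ra x$.

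\emph{Rewriting the left-hand side.} I would first record that $x\odot y=(x\ra y^{-})^{-}$: by $(H_3)$ we have $(x\odot y)\ra 0=x\ra(y\ra 0)$, that is $(x\odot y)^{-}=x\ra y^{-}$, and involutivity gives the claim. Two contrapositions then transform the left-hand side. Using the form $a\ra b^{-}=b\ra a^{-}$ (Proposition \ref{inv-qb-30}$(6)$), we get $x\ra(x\odot y)=x\ra(x\ra y^{-})^{-}=(x\ra y^{-})\ra x^{-}$. Applying $a\ra b=b^{-}\ra a^{-}$ to the outer implication (with $b=y$) then gives $(x\ra x\odot y)\ra y=\big((x\ra y^{-})\ra x^{-}\big)\ra y=y^{-}\ra\big((x\ra y^{-})\ra x^{-}\big)^{-}$.

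\emph{Collapsing the inner term.} The remaining task is to simplify $\big((x\ra y^{-})\ra x^{-}\big)^{-}$. Applying $(a\ra b^{-})^{-}=a\odot b$ (the involutive reformulation of the identity just recorded) with $a=x\ra y^{-}$ and $b=x$ yields $\big((x\ra y^{-})\ra x^{-}\big)^{-}=(x\ra y^{-})\odot x=x\odot(x\ra y^{-})$. Now I invoke the standard hoop identity $x\odot(x\ra z)=x\wedge z$, a consequence of $(H_2)$ valid in any bounded Wajsberg hoop (which is a lattice), with $z=y^{-}$, to obtain $x\odot(x\ra y^{-})=x\wedge y^{-}$. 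Substituting back and using the right-distributivity of $\ra$ over $\wedge$ gives $(x\ra x\odot y)\ra y=y^{-}\ra(x\wedge y^{-})=(y^{-}\ra x)\wedge(y^{-}\ra y^{-})=y^{-}\ra x$, since $y^{-}\ra y^{-}=1$. Together with the first paragraph this proves the lemma.

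\emph{Expected difficulty.} I expect the argument to be entirely mechanical once the identity $x\odot y=(x\ra y^{-})^{-}$ and the meet formula $x\odot(x\ra z)=x\wedge z$ are in place. The only real care needed is bookkeeping: applying each contraposition to the correct operand and tracking the single and double negations $x^{-}$ and $x^{--}=x$. No new inequality has to be proved, so there is no genuine conceptual obstacle beyond this verification.
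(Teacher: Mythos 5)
Your proof is correct, and its skeleton coincides with the paper's: both apply contraposition to the outer implication, rewrite $x\ra x\odot y$ as $(x\ra y^{-})\ra x^{-}$ via $(x\odot y)^{-}=x\ra y^{-}$, reduce everything to showing $\bigl((x\ra y^{-})\ra x^{-}\bigr)^{-}=x\wedge y^{-}$, and finish identically with $y^{-}\ra(x\wedge y^{-})=(y^{-}\ra y^{-})\wedge(y^{-}\ra x)=y^{-}\ra x$. The difference lies in how that middle identity is obtained. The paper stays on the join side: it swaps $x\ra y^{-}=y\ra x^{-}$, invokes the sup-commutative/Wajsberg join formula $(y\ra x^{-})\ra x^{-}=y\vee x^{-}$, and then applies De Morgan to get $(y\vee x^{-})^{-}=y^{-}\wedge x$. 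You instead negate first, turning the expression into the product $x\odot(x\ra y^{-})$, and invoke the hoop divisibility identity $x\odot(x\ra z)=x\wedge z$, a consequence of $(H_2)$. Your route is marginally more economical in its hypotheses: the Wajsberg axiom enters your argument only through the involutivity of the negation, while divisibility and the residuation fact $a\ra(b\wedge c)=(a\ra b)\wedge(a\ra c)$ hold in any bounded hoop with meets; the paper, by contrast, uses the Wajsberg structure a second time through the join formula. Each of your steps checks out (the contrapositions are the commutative forms of Propositions \ref{inv-qb-40}$(3)$ and \ref{inv-qb-30}$(6)$, as you cite), so the only caveat is presentational: in a final write-up you should state explicitly that the meet exists and that the distributivity of $\ra$ over $\wedge$ is being used, as both you and the paper invoke it without proof.
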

\begin{proof} Applying Propositions \ref{inv-qb-50} and \ref{inv-qb-100}$(4)$, we get: \\
$\hspace*{2.00cm}$ $(x\ra x\odot y)\ra y=y^{-}\ra (x\ra x\odot y)^{-}=y^{-}\ra ((x\odot y)^{-}\ra x^{-})$ \\ 
$\hspace*{4.90cm}$ $=y^{-}\ra ((x\ra y^{-})\ra x^{-})^{-}=y^{-}\ra ((y\ra x^{-})\ra x^{-})^{-}$ \\
$\hspace*{4.90cm}$ $=y^{-}\ra (y\vee x^{-})^{-}=y^{-}\ra y^{-}\wedge x$ \\
$\hspace*{4.90cm}$ $=(y^{-}\ra y^{-})\wedge (y^{-}\ra x)=y^{-}\ra x=x^{-}\ra y$.  
\end{proof}

The notion of a monadic Wajsberg hoop was introduced in \cite{Cim1} as an algebra $(H,\odot,\ra,\forall,1)$, where 
$(H,\odot,\ra,1)$ is a Wajsberg hoop and the universal quantifier $\forall$ satisfies the following axioms: 
$(MH_1)$ $\forall 1=1;$ 
$(MH_2)$ $\forall x\ra x=1;$ 
$(MH_3)$ $\forall((x\ra \forall y)\ra \forall y)=(\forall x\ra \forall y)\ra \forall y;$ 
$(MH_4)$ $\forall(x\ra y)\ra (\forall x\ra \forall y)=1;$ 
$(MH_5)$ $\forall(\forall x\ra \forall y)=\forall x\ra \forall y;$ 
$(MH_6)$ $\forall(x\odot x)=\forall x\odot \forall x;$ 
$(MH_7)$ $\forall((x\ra \forall y)\ra x)=(\forall x\ra \forall y)\ra \forall x;$ 
$(MH_8)$ $\forall(x\wedge y)=\forall x\wedge \forall y;$ 
$(MH_9)$ $\forall(\forall x\odot \forall y)=\forall x\odot \forall y$. 
As we mentioned, a bounded Wajsberg hoop $(H,\odot,\ra,\ra,0,1)$ is an involutive pseudo BCK-algebra.  

\begin{proposition} \label{qinv-iqb-50-10} 
Let $(X,\le,\ra,0,1)$ be a bounded sup-commutative BCK-algebra and let $\forall:X\longrightarrow X$ be the 
universal quantifier from Definition \ref{qinv-20}. 
Then $\forall$ satisfies the axioms $(MH_1)-(MH_6)$ and $(MH_8)-(MH_9)$, but not $(MH_7)$. 
\end{proposition}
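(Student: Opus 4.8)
The plan is to verify the eight holding axioms by specialising results already obtained for involutive pseudo BCK-algebras to the commutative situation, and to refute $(MH_7)$ by showing that only one of its two inequalities survives. First I would record the standing simplifications: on a bounded sup-commutative BCK-algebra the unit coincides with the top element $1$ and the distinguished element is $d=0$, by Propositions \ref{psBE-70} and \ref{inv-qb-60}, while commutativity forces $\ra=\rs$ (hence $x^{-}=x^{\sim}$). Under these identifications $(MH_1)$ is literally $(U_1)$, and $(MH_2)$ follows from $(U_3)$ together with the equivalence $x\le y\Leftrightarrow x\ra y=1$, exactly as in the proof of Lemma \ref{qinv-iqb-10}. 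The remaining implicational axioms are direct citations once $\rs$ is replaced by $\ra$: $(MH_3)$ is Proposition \ref{qinv-60-10}$(5)$, $(MH_4)$ is Proposition \ref{qinv-60-20}$(1)$, and $(MH_5)$ is Proposition \ref{qinv-60-10}$(1)$. Finally the multiplicative axioms $(MH_6)$ and $(MH_9)$ are $(U_5)$ and Proposition \ref{qinv-60-10}$(3)$, and $(MH_8)$ is the universal half of Lemma \ref{qinv-iqb-20}. None of these requires a fresh computation.

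The substantive point is $(MH_7)$. I would first show that the inequality $(\forall x\ra \forall y)\ra \forall x\le \forall((x\ra \forall y)\ra x)$ holds in every such algebra. Read in the commutative case $\rs=\ra$, Proposition \ref{qinv-60-20}$(2)$ is precisely $\forall((x\ra \forall y)\ra \forall x)\le \forall((x\ra \forall y)\ra x)$, and the proof of that proposition already evaluates its left-hand side as $(\forall x\ra \forall y)\ra \forall x$; combining the two gives the asserted inequality. Consequently one direction of $(MH_7)$ is automatic, and the axiom can fail only through a strict inequality $(\forall x\ra \forall y)\ra \forall x< \forall((x\ra \forall y)\ra x)$ for a suitable pair $x,y$.

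It therefore remains to produce a bounded sup-commutative BCK-algebra (equivalently, a finite MV-algebra) carrying a universal quantifier $\forall$ that satisfies $(U_1)$--$(U_7)$ yet realises this strict inequality; the algebra and the table of $\forall$ would be displayed, and the two sides of $(MH_7)$ computed by hand on the witnessing $x,y$. The main obstacle is precisely the choice of this example. The gap cannot open when $\forall$ acts ``fibrewise'': for the diagonal quantifier on a product of isomorphic \L ukasiewicz chains both sides of $(MH_7)$ coincide, because the map $t\mapsto (t\ra c)\ra t$ is isotone on each chain factor and hence commutes with the coordinatewise minima that compute $\forall$. One must therefore select a quantifier whose fixed-point subalgebra is embedded in $X$ in a genuinely non-functional manner, so that $\forall$ of $(x\ra \forall y)\ra x$ exceeds $\forall$ of $(x\ra \forall y)\ra \forall x$. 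Checking that the chosen $\forall$ obeys all of $(U_1)$--$(U_7)$ while breaking $(MH_7)$ is the only laborious part of the argument; the one-sided inequality proved above then guarantees that the failure is exactly of the form the statement predicts.
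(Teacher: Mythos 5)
Your verification of the eight positive axioms is exactly the paper's argument (the same citations: $(U_1)$, $(U_3)$, $(U_5)$ for $(MH_1)$, $(MH_2)$, $(MH_6)$; Propositions \ref{qinv-60-10}$(5)$,$(1)$,$(3)$, \ref{qinv-60-20}$(1)$ and Lemma \ref{qinv-iqb-20} for the rest), and your one-sided inequality $(\forall x\ra \forall y)\ra \forall x\le \forall((x\ra \forall y)\ra x)$, extracted from Proposition \ref{qinv-60-20}$(2)$ and its proof, is correct and a genuine refinement: it shows that $(MH_7)$ can only fail through a strict inequality in that one direction, which is precisely the content of axiom $(MH_7^{\prime})$ and Theorem \ref{qinv-iqb-60}.

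However, the claim ``but not $(MH_7)$'' is a non-derivability assertion, and it can only be established by exhibiting a concrete algebra and quantifier on which $(MH_7)$ fails. You explicitly do not do this: you say the example ``would be displayed'' and call its construction ``the main obstacle,'' so the proposal is incomplete at exactly the step that carries the negative content of the proposition. Moreover, your heuristic about where to look points away from the actual witness. The paper's counterexample is the three-element \L ukasiewicz chain $X=\{0,a,1\}$ with $0\le a\le 1$ (so $a\ra 0=a$, $1\ra a=a$, and $x\ra y=1$ whenever $x\le y$), equipped with $\forall 0=\forall a=0$, $\forall 1=1$, synchronized with $\exists 0=0$, $\exists a=\exists 1=1$. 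At $x=y=a$ one computes $\forall((a\ra \forall a)\ra a)=\forall((a\ra 0)\ra a)=\forall(a\ra a)=\forall 1=1$, while $(\forall a\ra \forall a)\ra \forall a=(0\ra 0)\ra 0=1\ra 0=0$. Your (correct) observation that diagonal quantifiers on powers of a chain satisfy $(MH_7)$ does not exclude this configuration, because here $\forall$ lives on a single chain and its image is the proper subalgebra $\{0,1\}$ --- the smallest possible setup, not the elaborate ``genuinely non-functional'' embedding your plan says must be hunted for; an isotone map such as $t\mapsto (t\ra 0)\ra t$ need not commute with the operation ``largest fixed point below $t$,'' which is what this $\forall$ is. Without some such explicit witness, checked against the quantifier axioms, the proposition is not proved.
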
 
\begin{proof}
Axioms $(MH_1)$, $(MH_2)$ and $(MH_6)$ are in fact the axioms $(U_1)$, $(U_3)$ and $(U_5)$, while axioms $(MH_3)$, 
$(MH_4)$, $(MH_5)$,$(MH_9)$ and $(MH_8)$ are consequences of Propositions \ref{qinv-60-10}$(5)$, 
\ref{qinv-60-20}$(1)$, \ref{qinv-60-10}$(1)$,$(3)$ and Lemma  \ref{qinv-iqb-20}, respectively. 
Let $X=\{0,a,1\}$ be a poset with $\le$ defined by $0\le a\le 1$. 
Define the operation $\ra$ on $X$ by the following table: 
\[
\begin{array}{c|cccccc}
\ra & 0 & a & 1 \\ \hline
0   & 1 & 1 & 1 \\
a   & a & 1 & 1 \\
1   & 0 & a & 1 
\end{array}
.
\]
Then, $(X,\le,\ra,0,1)$ is a sup-commutative BCK-algebra $\rm($\cite{Will1}$\rm)$. 
Consider the maps $\exists_i, \forall_i:X\longrightarrow X$, $i=1,2$, given in the table below:
\[
\begin{array}{c|cccccc}
 x          & 0 & a & 1 \\ \hline
\exists_1 x & 0 & 1 & 1 \\
\forall_1 x & 0 & 0 & 1 \\ \hline
\exists_2 x & 0 & a & 1 \\
\forall_2 x & 0 & a & 1 \\ 
\end{array}
.   
\]
One can check that $\mathcal{QSSM}^{\mathcal{W}}(X)=\mathcal{MOP}^{\mathcal{S}}(X)=\{(\exists_1,\forall_1), (\exists_2,\forall_2)\}$. 
We can see that $\forall_1((a\ra \forall_1 a)\ra a)=1$, while 
$(\forall_1 a\ra \forall_1 a)\ra \forall_1 a=0$, so that, axiom $(MH_7)$ is not verified for $x=y:=a$. 
\end{proof}

Propositions \ref{qinv-iqb-50-10} and \ref{qinv-60-20}$(2)$ lead to the following result. 

\begin{theorem} \label{qinv-iqb-60} Let $(H,\odot,\ra,0,1)$ be a bounded Wajsberg hoop and let 
$\forall:H\longrightarrow H$ satisfying the axioms $(U_1)-(U_5)$ and 
$(MH_7^{\prime})$ $\forall((x\ra \forall y)\ra x)\le \forall((x\ra \forall y)\ra \forall x)$, for all $x, y\in X$. 
Then $\forall$ is a universal quantifier on $H$. 
\end{theorem}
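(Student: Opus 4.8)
The plan is to read ``universal quantifier on $H$'' as the notion of \cite{Cim1}, i.e.\ to show that $\forall$ turns $(H,\odot,\ra,\forall,1)$ into a monadic Wajsberg hoop, so that it suffices to verify the axioms $(MH_1)$--$(MH_9)$. First I would fix the ambient structure: by Propositions \ref{psBE-70} and \ref{inv-qb-60} a bounded Wajsberg hoop is a bounded sup-commutative BCK-algebra, hence an involutive pseudo BCK-algebra with $d=0$, $u=1$ and $\ra=\rs$; being commutative, every self-map is good (Remark \ref{gm-20}$(3)$), so the synchronized partner $\exists$, given by $\exists x=(\forall x^{-})^{\sim}$, exists and $(\exists,\forall)\in\mathcal{SM}(H)$. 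Using that $u=1$ is the unit for $\odot$ together with $(U_1)$ and $(U_4)$ one obtains $\forall\forall=\forall$ (and dually $\exists\exists=\exists$); combined with the remaining quantifier axioms this is meant to place the pair in $\mathcal{QSSM}^{\mathcal W}(H)$, which is exactly what brings Propositions \ref{qinv-iqb-50-10}, \ref{qinv-60-20} and \ref{qinv-60} into play.

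Once the pair lies in $\mathcal{QSSM}^{\mathcal W}(H)$, Proposition \ref{qinv-iqb-50-10} immediately supplies $(MH_1)$--$(MH_6)$ and $(MH_8)$--$(MH_9)$, so the whole weight of the argument falls on $(MH_7)$, namely $\forall((x\ra\forall y)\ra x)=(\forall x\ra\forall y)\ra\forall x$. The key auxiliary identity is $\forall((x\ra\forall y)\ra\forall x)=(\forall x\ra\forall y)\ra\forall x$, which I would derive by applying Proposition \ref{qinv-60}$(7)$ to the outer $\ra\forall x$ and then Proposition \ref{qinv-60}$(2)$ to $\exists(x\ra\forall y)$. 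With this in hand, $(MH_7')$ reads $\forall((x\ra\forall y)\ra x)\le(\forall x\ra\forall y)\ra\forall x$, whereas Proposition \ref{qinv-60-20}$(2)$ gives the reverse inequality $(\forall x\ra\forall y)\ra\forall x\le\forall((x\ra\forall y)\ra x)$; antisymmetry $(psBCI_4)$ then yields the equality $(MH_7)$. Collecting $(MH_1)$--$(MH_9)$ shows that $\forall$ is a universal quantifier on $H$. This is the short part of the proof, and it is exactly the combination of Propositions \ref{qinv-iqb-50-10} and \ref{qinv-60-20}$(2)$ announced before the statement.

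The main obstacle I anticipate is not this final assembly but the verification that the stated hypotheses genuinely put $(\exists,\forall)$ into $\mathcal{QSSM}^{\mathcal W}(H)$, since Propositions \ref{qinv-60}, \ref{qinv-60-20}$(2)$ and \ref{qinv-iqb-50-10} all presuppose that $\forall$ is already a weak, strong synchronized universal quantifier. Concretely, one must confirm that the $\odot$-axioms $(U_4),(U_5)$ together with idempotency force the $\oplus$-multiplicativity $(U_6)$ and the strong synchronization required for membership; this is delicate because $\odot$-multiplicativity alone does \emph{not} entail $\oplus$-multiplicativity, so $(MH_7')$ must already be used here, not only for $(MH_7)$. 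For instance, specializing $(MH_7')$ at $y=0$, where $\forall y=0$ and $x\ra\forall y=x^{-}$, gives $\forall(x\oplus x)\le\forall(x\oplus\forall x)$, and together with monotonicity of $\forall$ this is the lever that pins down the $\oplus$-behaviour. Carrying out this bootstrapping — so that the machinery of the previous sections becomes legitimately applicable — is where the real work lies, and it is worth isolating it as a preliminary lemma before invoking the cited propositions.
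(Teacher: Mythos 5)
Your reconstruction of the final assembly is essentially the paper's own argument: the paper gives no proof beyond the sentence citing Propositions \ref{qinv-iqb-50-10} and \ref{qinv-60-20}$(2)$, and the way you combine them --- Proposition \ref{qinv-60}$(7)$,$(2)$ to rewrite $\forall((x\ra \forall y)\ra \forall x)$ as $(\forall x\ra \forall y)\ra \forall x$, then $(MH_7^{\prime})$ against Proposition \ref{qinv-60-20}$(2)$ and antisymmetry to recover $(MH_7)$, with Proposition \ref{qinv-iqb-50-10} supplying the remaining axioms --- is the only sensible reading of that sentence. One caveat on the target: judging by Remark \ref{qinv-iqb-70}, the paper's conclusion ``$\forall$ is a universal quantifier on $H$'' means Definition \ref{qinv-20}, i.e.\ that $(U_6)$ and $(U_7)$ hold, not the list $(MH_1)-(MH_9)$ of \cite{Cim1} that you verify; under your plan that content is pushed entirely into your unproven preliminary lemma.

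And that is the genuine gap: the proposal is not a proof, for exactly the reason you flag but do not resolve. Every result you invoke --- Proposition \ref{qinv-60}, Proposition \ref{qinv-60-20}$(2)$, Proposition \ref{qinv-iqb-50-10}, and even the isotonicity of $\forall$ (Lemma \ref{qinv-iqb-10}) on which your $y=0$ bootstrap leans --- is proved in the paper only under the hypothesis $(\exists,\forall)\in \mathcal{QSSM}^{\mathcal{W}}(H)$, which presupposes both the weak quantifier axiom $(U_6)$ and strong synchronization $\exists\forall=\forall$. Neither follows from $(U_1)-(U_5)$: the idempotency $\forall\forall=\forall$ that you correctly extract from $(U_1)$, $(U_4)$ and $x\odot 1=x$ is strictly weaker than $\exists\forall=\forall$, which unwinds to $\forall((\forall x)^{-})=(\forall x)^{-}$, an independent axiom in the monadic GMV setting (axiom $(MVU_3)$ of \cite{Rac1}); and $\oplus$-multiplicativity $(U_6)$ is precisely what the theorem is supposed to deliver. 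So the ``preliminary lemma'' you defer is not a technical check --- it is the whole content of the theorem, and your sketch of it ($(MH_7^{\prime})$ at $y:=0$ together with ``monotonicity of $\forall$'') already uses a monotonicity that is unavailable at that stage. It is fair to note that the paper's own one-sentence justification suffers the same circularity, so you have put your finger on a real defect in the paper; but as written, your proposal establishes the theorem only modulo an unproven lemma that carries all of its weight.
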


\begin{remark} \label{qinv-iqb-70}
For an unbounded Wajsberg hoop $(H,\odot,\ra,1)$, the assertions stated by axioms $(U_6)$ 
and $(U_7)$ were proved in \cite{Cim1} based on the definition $(WS)$ of the operation $\oplus$ and 
axiom $(MH_7)$ (see \cite[Lemma 3.2]{Cim1}).
\end{remark}

$\vspace*{5mm}$

\section{Concluding remarks}

We mentioned that the quantum B-algebras provide a unified semantic for non-commutative algebraic logic, 
and all implicational algebras studied before are quantum B-algebras. This motivated us to define and 
investigate in a previous work the monadic quantum B-algebras as generalization of allmost monadic algebras of 
fuzzy structures previously studied.
Since the quantifiers on involutive structures have special algebraic properties, in this paper we defined the 
involutive quantum B-algebras and introduced and studied the existential and universal quantifiers on these 
structures. As the main results, we proved that there is a one-to-one correspondence between the quantifiers on  
weakly involutive quantum B-algebras, and that any pair of quantifiers is a monadic operator on these structures. 
The results proved for the case of involutive pseudo BCK-algebras are also valid for pseudo MV-algebras and 
bounded Wajsberg hoops. \\
In a future work we will define and study the state monadic quantum B-algebras.  
Based on the monadic deductive systems, the uniform topology on a monadic pseudo-equality algebra has been 
introduced and studied in \cite{Ghor1}. 
As another topic of research, one could define a topology on monadic quantum B-algebras.


$\vspace*{5mm}$

\vspace*{3mm}


\begin{thebibliography}{99}

\bibitem{Bell1} L.P. Belluce, R. Grigolia, A. Lettieri, \emph{Representations of monadic MV-algebras}, 
Stud. Logica {\bf 81}(2005), 123–-144.

\bibitem{Bez1} G. Bezhanishvili, \emph{Varieties of monadic Heyting algebras I}, Stud. Log. {\bf 61}(1998), 367--402. 

\bibitem{Bez2} G. Bezhanishvili, J. Harding, \emph{Functional monadic Heyting algebras}, Algebra Univers. 
{\bf 48}(2002), 1--10. 




\bibitem{Cas1} D. Casta\~no, C. Cimadamore, J.P. D\'iaz Varela, L. Rueda, \emph{Monadic BL-algebras: The equivalent algebraic semantics of H\'ajek's monadic fuzzy logic}, Fuzzy Sets Syst. {\bf 320}(2017), 40–-59.

\bibitem{Cha1} I. Chajda, M. Kola\v r\'ic, \emph{Monadic basic algebras}, Acta Univ. Palacki. Olomuc., Fac. rer. nat. Mathematica {\bf 47}(2008), 27-–36.

\bibitem{Cim1} C.R. Cimadamore, J.P. D\'iaz Varela, \emph{Monadic Wajsberg hoops}, Rev. Un. Mat. Argentina 
{\bf 57}(2016), 63–-83.

\bibitem{Ciu14} L.C. Ciungu, \emph{Local pseudo BCK-algebras with pseudo-product}, Math. Slovaca {\bf 61}(2011), 127--154.

\bibitem{Ciu2} L.C. Ciungu, \emph{Non-commutative Multiple-Valued Logic Algebras}, Springer, Cham, Heidelberg,  New York, Dordrecht, London, 2014. 

\bibitem{Ciu30} L.C. Ciungu, J. $\rm K \ddot{u}hr$, \emph{New probabilistic model for pseudo BCK-algebras and pseudo-hoops}, J. Mult.-Valued Logic Soft Comput. {\bf 20}(2013), 373--400.

\bibitem{Ciu31} L.C. Ciungu, \emph{Relative negations in non-commutative fuzzy structures}, Soft Comput. 
{\bf 18}(2014), 15--33.

\bibitem{Ciu35} L.C. Ciungu, \emph{Monadic pseudo BE-algebras}, arXiv:1910.11996v1 [math.LO].   

\bibitem{Ciu36} L.C. Ciungu, \emph{Monadic classes of quantum B-algebras}, submitted.   

\bibitem{Conig1} M.E. Coniglio, F. Miraglia, \emph{Equality in linear logic}, Logique \& Analyse 
{\bf 153}(1996), 113--151.

\bibitem{Dino2} A. Di Nola, G. Georgescu, A. Iorgulescu, \emph{Pseudo-BL algebras: Part I}, Mult.-Valued Logic
{\bf 8}(2002), 673--714.

\bibitem{Dino3} A. Di Nola, G. Georgescu, A. Iorgulescu, \emph{Pseudo-BL algebras: Part II}, Mult.-Valued Logic
{\bf 8}(2002), 717--750.

\bibitem{Dino1} A. Di Nola, R. Grigolia, \emph{On monadic MV-algebras}, Ann. Pure Appl. Logic {\bf 128}(2004), 125--139.

\bibitem{Dud1} W.A. Dudek, Y.B. Jun, \emph{Pseudo-BCI algebras}, East Asian Math. J. {\bf 24}(2008), 187--190.


\bibitem{DvVe1} A. Dvure\v censkij, T. Vetterlein, \emph{Pseudoeffect algebras. I. Basic properties}, Inter. J. Theor.
Phys. {\bf 40}(2001), 685-–701.

\bibitem{DvVe2} A. Dvure\v censkij, T. Vetterlein, \emph{Pseudoeffect algebras. II. Group representation}, 
Inter. J. Theor. Phys. {\bf 40}(2001), 703-–726.

\bibitem{DvRa1} A. Dvure\v censkij, J. Rach{\accent23u}nek, \emph{Probabilistic averaging in bounded non-commutative R$\ell$-monoids}, Semigroup Forum {\bf 72}(2006), 190--206.


\bibitem{Dym3} G. Dymek, A. Kozanecka-Dymek, \emph{Pseudo-BCI logic}, Bull. Sect. Logic. \textbf{42}(2013), 33–-41.

\bibitem{Flo2} P. Flondor, G. Georgescu, A. Iorgulescu, \emph{Pseudo-t-norms and pseudo-BL algebras}, Soft Comput.
{\bf 5}(2001), 355--371.

\bibitem{Gal3} N. Galatos, P. Jipsen, T. Kowalski, H. Ono, \emph{Residuated Lattices: An Algebraic Glimpse at Substructural Logics}, Elsevier, New York, 2007. 

\bibitem{Geo20} G. Georgescu, A. Iorgulescu, I. Leu\c stean, \emph{Monadic and closure MV-algebras}, Multi. Val.
Logic {\bf 3}(1998), 235-–257.

\bibitem{Geo21} G. Georgescu, I. Leu\c stean, \emph{A representation theorem for monadic Pavelka algebras}, 
J. UCS {\bf 6}(2000), 105-–111.

\bibitem{Geo2} G. Georgescu, A. Iorgulescu, \emph{Pseudo MV-algebras}, Mult.-Valued Logic {\bf 6}(2001), 95--135.

\bibitem{Geo15} G. Georgescu, A. Iorgulescu, \emph{Pseudo-BCK algebras: An extension of BCK-algebras}, Proceedings of DMTCS'01: Combinatorics, Computability and Logic, Springer, London, 2001, pp. 97--114.

\bibitem{Geo16} G. Georgescu, L. Leu\c stean, V. Preoteasa, \emph{Pseudo-hoops}, J. Mult.-Valued Logic Soft Comput.
{\bf 11}(2005), 153--184.

\bibitem{Ghor1} S. Ghorbani, \emph{Monadic pseudo-equality algebras}, Soft Comput. 2019, 
DOI: https://doi.org/10.1007/s00500-019-04243-5.

\bibitem{Halmos} P. Halmos, \emph{Algebraic logic I: Monadic Boolean algebras}, Compos. Math. {\bf 12}(1955), 217-–249.

\bibitem{Han1} S.W. Han, X.T. Xu, F. Qin, \emph{The unitality of quantum B-algebras}, Int. J. Theor. Phys. 
{\bf 57}(2018), 1582--1590.

\bibitem{Han2} S. Han, R. Wang, X. Xu, \emph{The injective hull of quantum B-algebras}, Fuzzy Sets Syst. 
{\bf 369}(2019), 114--121.

\bibitem{Ior1} A. Iorgulescu, \emph{Classes of pseudo-BCK algebras - Part I}, J. Mult.-Valued Logic Soft Comput. 
{\bf 12}(2006), 71--130.

\bibitem{Ior14} A. Iorgulescu, \emph{Algebras of logic as BCK-algebras}, ASE Ed., Bucharest, 2008.

\bibitem{Ior16} A. Iorgulescu, \emph{Monadic involutive pseudo-BCK algebras}, Acta Univ. Apulensis Math. Inform. 
{\bf 15}(2008), 159--178.

\bibitem{Kondo1} M. Kondo, \emph{On residuated lattices with universal quantifiers}, Bull. Iranian Math. Soc.
{\bf 41}(2015), 923--929.

\bibitem{Kuhr6} J. $\rm K \ddot{u}hr$, \emph{Pseudo-BCK algebras and related structures}, 
Univerzita $\rm Palack\acute{e}ho$ v Olomouci, 2007.




\bibitem{Rac2} J. Rach{\accent23u}nek, \emph{A non-commutative generalization of MV-algebras}, Czechoslovak Math. J.
{\bf 52}(2002), 255--273.

\bibitem{Rac4} J. Rach{\accent23u}nek D. \v Salounov\'a, \emph{Monadic bounded commutative residuated
$\ell$-monoids}, Order {\bf 25}(2008), 157-–175. 

\bibitem{Rac3} J. Rach{\accent23u}nek D. \v Salounov\'a, \emph{Monadic bounded residuated lattices}, Order 
{\bf 30}(2013), 195--210.

\bibitem{Rac1} J. Rach{\accent23u}nek D. \v Salounov\'a, \emph{Monadic GMV-algebras}, Arch. Math. Logic {\bf 47}(2008), 277--297.

\bibitem{Rump1} W. Rump, \emph{Quantum B-algebras}, Cen. Eur. J. Math. {\bf 11}(2013), 1881--1899.

\bibitem{Rump2} W. Rump, Y.C. Yang, \emph{Non-commutative logic algebras and algebraic quantales}, Ann. Pure Appl. Logic {\bf 165}(2014), 759--785.

\bibitem{Rump3} W. Rump, \emph{The completion of a quantum B-algebra}, Cah. Topol. G\'eom. Diff\'er. Cat\'eg. 
{\bf 57}(2016), 203--228.

\bibitem{Rut1} J.D. Rutledge, \emph{A preliminary investigation of the infinitely many-valued predicate 
calculus}, Ph.D. Thesis, Cornell University (1959).

\bibitem{Sol1} S.A. Solovyov, \emph{On monadic quantale algebras: Basic properties and representation theorems}, 
Discuss. Math. Gen. Algebra Appl. {\bf 30}(2010), 91--118.

\bibitem{Wang1} J. Wang, X. Xin, P. He, \emph{Monadic bounded hoops}, Soft Comput {\bf 22}(2018), 1749-–1762. 

\bibitem{Wang2} J. Wang, X. Xin, P. He, \emph{Monadic NM-algebras}, Logic Journal of the IGPL,  https://doi.org/10.1093/jigpal/jzz005.

\bibitem{Will1} A.M. Wille, \emph{Residuated structures with involution}, Ph.D Thesis, Darmstadt Technical University, 2006.
 
\bibitem{Xin1} X. Xin, Y. Fu, Y. Lai, J. Wang, \emph{Monadic pseudo BCI-algebras and corresponding logics}, 
Soft Comput. {\bf 23}(2019), 1499–-1510. 

\bibitem{Zah1} S.Y. Zaheriani, O. Zahiri, \emph{Monadic BE-algebras}, J. Intell. Fuzzy Syst. {\bf 27}(2014), 2987–-2995. 


\end{thebibliography}
\end{document}